\definecolor{refkey}{gray}{.5}   
\definecolor{labelkey}{gray}{.5} 
\numberwithin{equation}{section}
\newcommand{\di}{\displaystyle}
\newcommand{\R}{{\mathbb R}}
\newcommand{\C}{{\mathbb C}}
\newcommand{\N}{{\mathbb N}}
\newcommand{\Z}{{\mathbb Z}}
\newcommand{\T}{{\mathbb T}}
\renewcommand{\d}{\partial}
\renewcommand{\Re}{{\operatorname{Re\,}}}
\renewcommand{\Im}{{\operatorname{Im\,}}}
\newcommand{\supp}{{\operatorname{supp}\,}}
\newcommand{\Ai}{{\operatorname{Ai}}}
\newcommand{\Bi}{{\operatorname{Bi}}}
\newcommand{\arctanh}{{\operatorname{arctanh}}}
\newcommand{\I}{{\mathbf I}}
\newcommand{\Res}{{\operatorname{Res}\,}}
\newcommand{\al}{\alpha}
\newcommand{\be}{\beta}
\newcommand{\ga}{\gamma}
\newcommand{\Ga}{\Gamma}
\newcommand{\ep}{\varepsilon}
\newcommand{\de}{\delta}
\newcommand{\De}{\Delta}
\newcommand{\sg}{\sigma}
\newcommand{\Sg}{\Sigma}
\newcommand{\om}{\omega}
\newcommand{\Om}{\Omega}
\renewcommand{\th}{\theta}
\newcommand{\z}{\zeta}
\newcommand{\bigO}{{\mathcal O}}
\newcommand{\mcal}{{\mathcal M}}
\newcommand{\lv}{{\langle}}
\newcommand{\rv}{{\rangle}}
\newtheorem{thm}{Theorem}[section]
\newtheorem{cor}[thm]{Corollary}
\newtheorem{lem}[thm]{Lemma}
\newtheorem{prop}[thm]{Proposition}
\theoremstyle{remark}
\newtheorem{rem}{Remark}[section]
\newcommand{\eq}{\textrm{eq}}
\newcommand{\lattice}{L_{\al, m}}
\newcommand{\beq}{\begin{equation}}
\newcommand{\eeq}{\end{equation}}
\title{The Fourier extension method and discrete orthogonal polynomials on an arc of the circle }
\author{J.S. Geronimo\thanks{Department of Mathematics, Georgia Institute of Technology, Atlanta, USA. email: \href{mailto:geronimo@math.gatech.edu}{\nolinkurl{geronimo@math.gatech.edu}}} \and 
Karl Liechty\thanks{Department of Mathematical Sciences, DePaul University, Chicago, IL, 60614 USA \href{mailto:kliechty@depaul.edu}{\nolinkurl{kliechty@depaul.edu}}.}
}
\begin{document}

\maketitle

\begin{abstract}
The Fourier extension method, also known as the Fourier continuation method, is a method for approximating non-periodic functions on an interval using truncated Fourier series with period larger than the interval on which the function is defined. When the function being approximated is known at only finitely many points, the approximation is constructed as a projection based on this discrete set of points. In this paper we address the issue of estimating the absolute error in the approximation. The error can be expressed in terms of a system of discrete orthogonal polynomials on an arc of the unit circle, and these polynomials are then evaluated asymptotically using Riemann--Hilbert methods. 
\end{abstract}

\bigskip

\noindent{\bf Keywords} Fourier approximation, Fourier extension, Fourier continuation, discrete orthogonal polynomials, orthogonal polynomials on the unit circle, Riemann--Hilbert problem

\medskip

\noindent{\bf Mathematics subject classification (2010)} Primary: 65T40; Secondary: 42A10, 35Q15

\section{Introduction}\label{intro}

Let $f:[-1/2,1/2] \to \C$ be a smooth function which we would like to approximate via truncated Fourier series of length $M\in \N$. The usual Fourier methods would involve projecting $f$ onto the  space of Laurent polynomials of period $1$ given by 
\begin{equation}
S_M={\rm span}\{ e^{2 \pi i  k x}\}_{k\in t(M)},
\end{equation}
 where 
 \begin{equation}\label{def:tM}
 t(M)=\left\{
 \begin{aligned}
& \left\{k\in\Z : -\frac{(M-1)}{2}\le k\le\frac{(M-1)}{2} \right\}  \ {\rm for} \  M \ {\rm odd} \\
& \left\{k\in\Z : -\frac{ M   }{2}\le k\le\frac{M}{2}-1\right\}  \ {\rm for} \ M \ {\rm even}.
\end{aligned}\right.
\end{equation}
If $f$ is not periodic, then the usual Fourier methods fail to give a uniform approximation of $f$ near the endpoints $\pm 1/2$ due to the Gibbs phenomenon. One method for dealing with this problem is to extend $f$ smoothly to a function $\tilde{f}$ on a larger interval $[-b/2,b/2]$ ($b>1$), and to try to approximate $\tilde{f}$ (and therefore $f$ itself) by a Fourier series on this larger interval. It is well known that such a smooth periodic extension is always possible, although it is far from unique. This method is known as the {\it Fourier extension method} \cite{Boyd02} or the {\it Fourier continuation method} \cite{Bruno03, brhapo}, and involves projection onto the space of Laurent polynomials of period $b>1$ given by 
\begin{equation}
{S}^b_M=\{ e^{\frac{2 \pi i  k}{b} x}\}_{k\in t(M)},
\end{equation}
where $t(M)$ is as in \eqref{def:tM}. Since the Fourier extension $\tilde{f}$ is a smooth periodic function with period $b$, it has a Fourier series
\begin{equation}\label{ftilde_Fourier}
\tilde{f}(x) = \sum_{k=-\infty}^\infty a_k e^{2\pi i k x/b},
\end{equation}
where the coefficients $a_k$ decay faster than any power of $k$. 

We consider the Fourier extension problem of the third kind as described by Boyd \cite{Boyd02}, in which the function $f$ is not known outside of its original interval of definition $[-1/2,1/2]$. We assume that $f$ is known only at a finite number of points, which we assume for now are equispaced on the interval $[-1/2,1/2]$. That is, for a given positive integer $N$, the value of $f(x)$ is known only at the points $x_1, \dots, x_N$ given by 
 \begin{equation}\label{def:xj}
x_j=\frac{j}{N}-\frac{1}{2}-\frac{1}{2N}, \quad j=1,2,\ldots, N.
\end{equation}

In order to describe the projection onto ${S}^b_M$ based on this data, define the discrete inner product $\lv \cdot, \cdot \rv_N$ as
 \begin{equation}\label{def:inner_product}
 \lv f, g\rv_N := \frac{1}{N} \sum_{j=1}^N f(x_j) \overline{ g(x_j)},
\end{equation}
and let $|| \cdot ||_N$ be the norm inherited from this inner product. We seek the function $q\in {S}^b_M$ which is closest to $f$ in this norm. That is, $q$ should satisfy
\begin{equation}\label{ls}
||f-q||_N^2=\min_{h\in S^b_M}||f-h||_N^2,
\end{equation}
and it is a simple exercise in linear algebra to see that the minimizer is unique provided $M\le N$, which we assume throughout this paper.

Here we note that the minimization problem \eqref{ls} comes from a discrete inner product, but one would like to have uniform bounds on the difference $|f(x)-q(x)|$ for all $x\in[-1/2,1/2]$. Obtaining $L^\infty$ bounds from a discrete $L^2$ construction is a delicate issue which we address in this paper. When the discrete inner product \eqref{def:inner_product} is replaced by the usual $L^2$ inner product, the analysis is simplified greatly and an exponential bound on the $L^\infty$ norm of the difference $f-q$ was obtained in \cite[Theorem 2.3]{Adcock-Huybrechs14}, see also \cite{Huybrechs10, Webb-Coppe-Huybrech18}.  Recent works have distinguished between the {\it discrete Fourier extension} defined in terms of the discrete inner product \eqref{def:inner_product} and the {\it continuous Fourier extension} defined in terms of the usual $L^2$ inner product \cite{Adcock-Huybrechs14, adhum}. While the continuous Fourier extension is of theoretical interest, the discrete Fourier extension arises naturally in applications, and plays an important role in the solution of numerical PDEs, see e.g. \cite{adhum, brhapo} and references therein. In the current paper we study the absolute error for the discrete Fourier extension (in one dimension) based on the approach outlined in \cite[Section 2.3]{brhapo}. Namely, the error may be expressed as a series involving the Fourier coefficients of the extension function $\tilde f$ and a certain sequence of functions $\{B^k_{N,M}(x)\}_{k\in \Z}$ which is independent of $f$, see equation~\eqref{def:Bk} below. The primary results of this paper give uniform asymptotic estimates on the functions  $B^k_{N,M}(x)$ for large $M$ and $N$, with $N/M>1$ bounded, see Theorems \ref{main_band},  \ref{upper}, and \ref{lower}, as well as Corollary \ref{maincor}. The proofs are based on the fact that $B^k_{N,M}$ can be expressed in terms a system of discrete orthogonal polynomials on an arc of the unit circle, and these orthogonal polynomials may be evaluated asymptotically using Riemann--Hilbert methods. A key ingredient in the analysis is a constrained equilibrium problem (see Section \ref{heuristics}), which was first introduced by Rakhmanov  in the study of Chebyshev polynomials of a discrete variable \cite{Rakhmanov96}. For a more general study of constrained equilibrium problems, see the paper \cite{Dragnev-Saff97} of Dragnev and Saff. In the context of numerical analysis, constrained equilibrium problems have appeared in analysis of convergence properties of Krylov subspace iterations, see the survey paper \cite{Kuijlaars06} of Kuijlaars and references therein.

Below we express the solution to the minimization problem \eqref{ls} in terms of orthogonal polynomials before presenting our results. 

\subsection{The least squares projection and main results}
Introduce the orthonormal polynomials with respect to the inner product \eqref{def:inner_product} in the variable $z=e^{2\pi i x/b}$. That is, we let $\varphi^{N}_k(z)$ be the polynomial of degree $k$ with positive leading coefficient satisfying
 \begin{equation}\label{def:orthoprod}
 \frac{1}{N}\sum_{j=1}^N \varphi^{N}_k(z_j)\overline{\varphi^{N}_\ell(z_j)} = \de_{k\ell}, \quad z_j = e^{2\pi i x_j/b},
 \end{equation}
 where the sampling points $x_j$ are defined in \eqref{def:xj}. 
The orthogonal projection $P_{ S^b_M}$ onto
$ S^b_M$ above is given by
\begin{equation}\label{OP}
P_{ S^b_M}(f)(x)=\lv f(\cdot),K_{M}(\cdot, x)\rv_N,
\end{equation} 
where 
\begin{equation}\label{km}
K_{M}(x,y)=e^{2\pi i \frac{M_0}{b}(x-y)}\sum_{l=0}^{M-1}\varphi^{N}_l(e^{\frac{2\pi i}{b}x})\overline{\varphi^{N}_l(e^{\frac{2\pi i}{b}y})}, \quad M_0=\min t(M).
\end{equation}
These polynomials only exist for $k<N$, thus $P_{ S^b_M}$ exists for $M\le N$.

To have a good approximation we would like for the error in the projection \eqref{OP} to be uniformly small for $x\in[-1/2, 1/2]$, and we denote the error function as
\begin{equation}\label{minh}
E^{f, b}_{M,N}(x)=(1-P_{S^b_M})(f)(x), 
\end{equation}
which is the same as 
\begin{equation}
(1-P_{S^b_M})(\tilde{f})(x)
\end{equation}
for $x\in[-1/2, 1/2]$, since $f$ and $\tilde{f}$ agree on this interval.
We assume that $\tilde{f}$ has the Fourier expansion \eqref{ftilde_Fourier}, which gives the series for $E^{f, b}_{M,N}(x)$:
\begin{equation}
E^{f, b}_{M,N}(x)=\sum_{k=-\infty}^\infty a_k \left[ e^{2\pi i k x/b} -e^{2\pi i \frac{M_0}{b} x}\sum_{l=0}^{M-1} \varphi^{N}_l(e^{\frac{2\pi
    i}{b}x})\lv e^{-2\pi
  i \frac{M_0}{b} \cdot}e^{\frac{2\pi i k}{b}\cdot},\varphi^{N}_l(e^{\frac{2\pi
      i}{b}\cdot})\rv_N\right],
\end{equation}
or equivalently
\begin{equation}\label{Error_series}
\begin{aligned}
e^{2\pi i \frac{M_0}{b} x}E^{f, b}_{M,N}(x)&=\sum_{k=-\infty}^\infty a_k \left[ e^{2\pi i x(k-M_0)/b} -\frac{1}{N}\sum_{l=0}^{M-1} \varphi^{N}_l(e^{\frac{2\pi
    i}{b}x})\sum_{j=1}^N e^{2\pi
  i \frac{k-M_0}{b} x_j}\overline{\varphi^{N}_l(e^{\frac{2\pi
      i}{b}x_j})} \right] \\
      &=\sum_{k=-\infty}^\infty a_k B^{k}_{N,M}(x),
      \end{aligned}
\end{equation}
where
\begin{equation}\label{def:Bk}
B^{k}_{N,M}(x) := e^{2\pi i x(k-M_0)/b} -\frac{1}{N}\sum_{l=0}^{M-1} \varphi^{N}_l(e^{\frac{2\pi
    i}{b}x})\sum_{j=1}^N e^{2\pi
  i \frac{k-M_0}{b} x_j}\overline{\varphi^{N}_l(e^{\frac{2\pi
      i}{b}x_j})}.
      \end{equation}
The error may therefore be estimated as
\begin{equation}\label{EB_ineq}
\left\lvert E^{f, b}_{M,N}(x)\right\rvert \le \sum_{k\notin t(M)} \left\lvert B^{k}_{N,M}(x)\right\rvert \lvert a_k\rvert.
\end{equation}
Since we assume that $\tilde f(x)$ is a smooth function periodic on $[-b/2,b/2]$, the coefficients $a_k$ decay faster than any power of $k$. The error is therefore uniformly small as long as $\sup_{x\in[-1/2,1/2]} \lvert B^{k}_{N,M}(x) \rvert$ grows no faster than polynomially. We will use the orthonormal polynomials \eqref{def:inner_product} to estimate $\lvert B^{k}_{N,M}(x)\rvert $ for $x\in [-1/2,1/2]$.

Using the Cauchy--Schwarz inequality along with the orthonormality of $\varphi^{N}_l(z)$, we can obtain the simple upper bound 
\begin{equation}\label{ubq}
\begin{aligned}
\left|B^{k}_{N,M}(x)\right| &=\left|e^{\frac{2\pi i(k-M_0)}{b}x}-\sum_{l=0}^{M-1}\varphi^{N}_l(e^{\frac{2\pi
    i}{b}x}) \lv e^{2\pi i\frac{ k-M_0}{b}\cdot}, \varphi^{N}_l(e^{\frac{2\pi
      i}{b}\cdot})\rv_N \right|\\
    &\le 1+\sum_{l=0}^{M-1}\left |\varphi^{N}_l(e^{\frac{2\pi i}{b}x})\right|.
\end{aligned}
\end{equation}
Thus a bound on the orthonormal polynomials gives an upper bound on $\left|B^{k}_{N,M}(x)\right|$. 

 It turns out that the asymptotic estimates for the error terms $B_{N,M}^k(x)$ are vastly different for $x$ close to the middle of the interval $[-1/2,1/2]$ and for $x$ close to the edges. To describe the different behaviors, first define the number $\tilde\be\in(0,\frac{1}{2})$ via the equation
\begin{equation}\label{def:tbeta}
\cos\frac{2 \pi\tilde\be}{b} = \cos(\pi/b) + \left(1+\cos(\pi/b)\right) \tan\left(\frac{\pi M}{2N b}\right)^2.
\end{equation}

\begin{thm}\label{main_band}
Assume $N$ is odd and let $N$ and $M$ approach infinity in such a way that the ratio $N/M\ge 1$ remains bounded. Also fix $b>1$ such that $Nb\in \Z$, let $\tilde\be\in(0,\frac{1}{2})$ be defined by \eqref{def:tbeta}, and 
fix $x$ such that $|x|<\tilde\be$. Then the orthonormal polynomial $\varphi^{N}_M(e^{\frac{2\pi i}{b}x})$ satisfies
\begin{equation}
\varphi^{N}_M(e^{\frac{2\pi i}{b}x}) = \bigO(1),
\end{equation}
as $M\to\infty$. Thus the error term $B_{N,M}^k(x)$ satisfies
\begin{equation}
\left\lvert B_{N,M}^k(x)\right\rvert = \bigO(M).
\end{equation}
These estimates are uniform in $x$ on compact subsets of the interval $(-\tilde\be,\tilde\be)$.
\end{thm}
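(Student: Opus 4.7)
Since the Cauchy--Schwarz inequality \eqref{ubq} yields $|B^k_{N,M}(x)|\le 1+\sum_{l=0}^{M-1}|\varphi^N_l(e^{2\pi i x/b})|$, the bound $|B^k_{N,M}(x)|=\bigO(M)$ follows immediately once $|\varphi^N_l(e^{2\pi i x/b})|=\bigO(1)$ is established uniformly in $l\le M-1$. The entire argument therefore reduces to a bulk asymptotic estimate for the orthonormal polynomials, which I would obtain by the Deift--Zhou nonlinear steepest-descent method applied to the Baik--Kriecherbauer--McLaughlin--Miller interpolation Riemann--Hilbert problem for discrete orthogonal polynomials on the unit circle.

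The first step is to set up the $2\times 2$ interpolation RHP whose $(1,1)$-entry is $\varphi^N_M(z)$, with jumps supported at the mass points $z_j$. By the arithmetic hypothesis $Nb\in\Z$, the $z_j$ are $N$ consecutive $(Nb)$-th roots of unity lying on an arc of angular length $2\pi/b$, and the standard trick of multiplying by a rational factor with simple poles at the $z_j$ converts the discrete interpolation problem to a continuous RHP posed on a contour encircling the arc. I would then identify the constrained equilibrium measure $\mu^*$ supported on the arc, minimizing the logarithmic energy subject to the upper-constraint $\mu^*\le\sigma$, where $\sigma$ is the continuum limit of $\tfrac{1}{N}\sum\delta_{z_j}$ (uniform angular density $b/(2\pi)$ on the arc), and with total mass $M/N$. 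For the parameter range under consideration, the expected structure is that $\mu^*=\sigma$ on two saturated sub-arcs adjacent to the endpoints $e^{\pm i\pi/b}$ and $\mu^*<\sigma$ on a central band separated by transition points $e^{\pm 2\pi i\tilde\be/b}$; the explicit trigonometric relation \eqref{def:tbeta} for $\tilde\be$ should emerge upon solving the Euler--Lagrange conditions of this constrained problem.

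After forming the $g$-function from $\mu^*$, the usual sequence of transformations (normalization at $\infty$, opening of lenses along the band, trivialization of the jumps on the saturated sub-arcs) leaves a model RHP solved by a Szeg\H{o}-type global parametrix together with Airy local parametrices at the two band endpoints $e^{\pm 2\pi i\tilde\be/b}$. The small-norm correction then yields an asymptotic formula whose leading term in the band is an oscillatory bounded combination of Szeg\H{o} functions, giving $\varphi^N_M(e^{2\pi i x/b})=\bigO(1)$ uniformly on compact subsets of $(-\tilde\be,\tilde\be)$. Inspection of \eqref{def:tbeta} shows that $\tilde\be$ is decreasing in $M$ (for fixed $N$, $b$), so the bulk band associated to any $l\le M-1$ strictly contains the one associated to $M$, and the same steepest-descent analysis at degree $l$ delivers the same $\bigO(1)$ bound on the prescribed compact subset; summing over $l$ in \eqref{ubq} produces the claimed $\bigO(M)$ estimate.

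The principal obstacle I anticipate is solving the constrained variational problem explicitly enough to verify that the band endpoints are exactly $\pm\tilde\be$ as given by \eqref{def:tbeta}, since the whole subsequent steepest-descent analysis hinges on this identification. A secondary difficulty is ensuring that the error bounds obtained from the RH analysis are genuinely uniform in the degree $l\in\{0,1,\dots,M-1\}$ and in $x$ on the compact subset, rather than only pointwise; this will require keeping track of how the parametrix estimates depend on $l/N$ throughout its range.
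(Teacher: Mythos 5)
Your proposal follows essentially the same route as the paper: reduce via the Cauchy--Schwarz bound \eqref{ubq} to an $\bigO(1)$ estimate for $\varphi^N_l$ in the band, then obtain that estimate from a Deift--Zhou steepest-descent analysis of the interpolation Riemann--Hilbert problem for discrete orthogonal polynomials on the arc $C_\al$, with the constrained equilibrium measure yielding the band/saturated-region decomposition and the transition points at $\pm\tilde\beta$ as in \eqref{def:tbeta}; this is exactly what the paper does in Sections 2, 4, and 5 via Propositions \ref{EqMeasure}, \ref{g-function}, \ref{monic_band}, and \ref{asymhn}. Your observation that $\tilde\beta_l$ increases as $l$ decreases, so that the degree-$M$ band is contained in every degree-$l$ band for $l<M$, is a correct and useful point that handles the uniformity over $0\le l\le M-1$ needed to sum \eqref{ubq}, a step the paper leaves implicit in its ``immediately implies.''
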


In the theorem above we make the technical assumptions that $N$ is odd and $Nb$ is an integer. These assumptions are purely technical to ease the analysis, and could be removed with some effort.

When $x$ is outside the interval $(-\tilde\be,\tilde\be )$, the asymptotic formulas for the orthonormal polynomials are quite different. The next two results involve a certain function $\tilde L(x)$ on $[-1/2,1/2]$. This function depends on the extended period $b\ge 1$ as well as the sampling ratio $ N/M\ge 1$. We therefore denote
\begin{equation}
\tilde\xi := N/M,
\end{equation}
and write $\tilde{L}(x) \equiv \tilde{L}(x; b,\tilde\xi)$, suppressing the dependence on $b$ and $\tilde \xi$ when those parameters are fixed and there is no possibility of confusion. An explicit formula for $\tilde L(x; b, \tilde\xi)$ is given in equation \eqref{mr1b}, but it is not essential to the primary results. Some key properties of this function are listed in the following proposition.  
\begin{prop}\label{thm:L_properties}
Let $\tilde{L}(x) \equiv \tilde{L}(x; b, \tilde\xi):= L(2\pi x/b; \pi/b, b\tilde\xi)$ where the function $L(\phi; \al, \xi)$ is defined in \eqref{mr1b}. This function satisfies the following properties.
\begin{enumerate}[label=(\alph*)]
\item \label{thm:L_properties1}For fixed $b\ge 1$ and $\tilde\xi\ge 1$, the function $ \tilde{L}(x)$ is constant for $x\in [-\tilde\beta, \tilde\beta]$, strictly increasing for $x\in [\tilde\beta, 1/2]$, and decreasing for $x\in[-1/2,-\tilde\beta]$.
\item  \label{thm:L_properties2}  As a function of $\tilde{\xi}$, $\tilde L(\pm 1/2; b,\tilde\xi)$ is a decreasing function of $\tilde\xi >1$. The same is true for $x$ in a neighborhood of the endpoints $\pm 1/2$, but the size of this neighborhood may depend on $\tilde\xi$.
\item \label{thm:L_properties3} For fixed $x\in [-\tilde\be, \tilde\be]$, the function $\tilde L(x; b,\tilde\xi)$ is an increasing function of $\tilde\xi$. As $\tilde\xi\to\infty$, $\tilde\beta$ converges to $1/2$, and $\tilde L(x; b,\tilde\xi)$ approaches a strictly negative constant.
\item  \label{thm:L_properties4} For any $b>2$,  $\tilde L(x; b,\tilde\xi)<0$ for all $\tilde\xi\ge 1$ and $x\in [-1/2,1/2]$.
\item  \label{thm:L_properties5} For any $1< b\le 2$, there exists a sampling density $\tilde\xi_b$ such that for any $\tilde\xi>\tilde\xi_b$,  the function $\tilde L(x; b,\tilde\xi)$ is negative for all $x\in [-1/2,1/2]$. Conversely, for each $1\le \tilde\xi<\tilde\xi_b$, there exists $x_{\tilde\xi}\in (0,1/2)$ such that $\tilde L(x; b,\tilde\xi)$ is positive for all $x\in[-1/2, -x_{\tilde\xi})\cup (x_{\tilde\xi}, 1/2]$.
\end{enumerate}
\end{prop}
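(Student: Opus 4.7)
The plan is to work directly from the explicit formula for $L(\phi;\al,\xi)$ referenced in \eqref{mr1b}. The function $\tilde L(x)=L(2\pi x/b;\pi/b,b\tilde\xi)$ plays the role of an effective potential (the real part of a $g$-function) in the Riemann--Hilbert analysis of the discrete orthogonal polynomials $\varphi^N_k$, and $[-\tilde\beta,\tilde\beta]$ is, under the map $x\mapsto e^{2\pi i x/b}$, the support of the associated equilibrium measure on the arc, with $\tilde\beta$ determined by mass-normalization and the upper constraint via \eqref{def:tbeta}. This viewpoint makes the qualitative features of $\tilde L$ transparent, and I would use it as an organizing principle rather than proving each claim by brute calculation.

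For part \ref{thm:L_properties1}, I would recover the constancy of $\tilde L$ on $[-\tilde\beta,\tilde\beta]$ as the Euler--Lagrange variational condition for the equilibrium measure, and the monotonicity outside as the complementary one-sided variational inequality. Concretely, differentiating the explicit formula produces $\tfrac{d\tilde L}{dx}$ in terms of a square root that vanishes at $\pm\tilde\beta$, is identically zero on $(-\tilde\beta,\tilde\beta)$, and has the claimed sign on the complement; the sign computation is routine once the branch of the square root is fixed from the Riemann--Hilbert normalization. For part \ref{thm:L_properties3}, I would differentiate \eqref{def:tbeta} implicitly to see that $\tilde\beta$ is strictly increasing in $\tilde\xi$ and tends to $1/2$, then plug into \eqref{mr1b} and isolate the $\tilde\xi$-dependence to deduce monotonicity of $\tilde L(x;b,\tilde\xi)$ at fixed $x\in(-\tilde\beta,\tilde\beta)$. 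The limiting constant as $\tilde\xi\to\infty$ corresponds to the equilibrium problem with inactive upper constraint, which is exactly the continuous Fourier extension problem of \cite{Adcock-Huybrechs14}, and is known to produce a strictly negative constant on the support.

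For parts \ref{thm:L_properties2}, \ref{thm:L_properties4} and \ref{thm:L_properties5}, the analysis concentrates at the endpoints $x=\pm 1/2$. I would evaluate $\tilde L(\pm 1/2;b,\tilde\xi)$ explicitly from \eqref{mr1b} and view the result as a function of two parameters $(b,\tilde\xi)$; the symmetry $x\mapsto -x$ of the problem reduces the analysis to $x=1/2$. Part \ref{thm:L_properties2} follows by differentiating this one-variable expression in $\tilde\xi$ and checking its sign, with the uniformity in a neighborhood of the endpoints handled by a continuity/compactness argument together with part \ref{thm:L_properties1}. For \ref{thm:L_properties4}, the key inequality is that for $b>2$ the endpoints $e^{\pm i\pi/b}$ lie well away from the arc $\{e^{2\pi i x/b}:x\in[-\tilde\beta,\tilde\beta]\}$ and the explicit endpoint value is unconditionally negative for all $\tilde\xi\ge 1$; this reduces to an elementary transcendental inequality.

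The main obstacle will be part \ref{thm:L_properties5}. Here the sign of $\tilde L(1/2;b,\tilde\xi)$ depends on a competition between attraction to the support and repulsion from the upper constraint, and one must identify a threshold $\tilde\xi_b$. I would define $\tilde\xi_b$ as the unique zero of $\tilde\xi\mapsto\tilde L(1/2;b,\tilde\xi)$ (uniqueness will follow from strict monotonicity established in \ref{thm:L_properties2}), check that $\tilde L(1/2;b,1)>0$ and $\tilde L(1/2;b,\tilde\xi)\to$ a negative constant as $\tilde\xi\to\infty$ via \ref{thm:L_properties3} to ensure $\tilde\xi_b$ exists in $(1,\infty)$, and then leverage monotonicity again to conclude global negativity for $\tilde\xi>\tilde\xi_b$. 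For $\tilde\xi<\tilde\xi_b$, positivity at the endpoint together with constancy on $[-\tilde\beta,\tilde\beta]$ and monotonicity in between (from \ref{thm:L_properties1}) produces a unique $x_{\tilde\xi}\in(\tilde\beta,1/2)$ where $\tilde L$ vanishes; by symmetry the same holds at the left endpoint. The delicate step is checking that $\tilde L(x;b,\tilde\xi)$ is strictly monotone in $x$ on $(\tilde\beta,1/2)$ for the relevant range of $\tilde\xi$, which I plan to handle by the same square-root derivative analysis as in \ref{thm:L_properties1}.
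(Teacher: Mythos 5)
The architecture of your plan (Euler--Lagrange conditions give constancy in the band; differentiate in $\phi$ and $\xi$ and check signs; reduce \ref{thm:L_properties4} and \ref{thm:L_properties5} to the endpoint $\phi=\al$ and a threshold in $\xi$) matches the paper's, but the hard part of the proof is precisely the sign checks that you defer to "routine computation" or an "elementary transcendental inequality," and these are where the real ideas live.

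For \ref{thm:L_properties2}, differentiating $L$ in $\xi$ does not lead to an expression whose sign is obvious. The paper (Lemma~\ref{Lphi}) writes $\tfrac{d}{d\xi}L(\phi;\al,\xi)=\tfrac{1}{\xi}\bigl(L(\phi;\al,\xi)-L_\be(\phi;\be)\bigr)$, where $L_\be$ is the logarithmic potential of the \emph{unconstrained} equilibrium measure on $[-\be,\be]$, and then must prove $L(\al;\al,\xi)<L_\be(\al;\be)$. That is done by integrating the difference $g'-g_\be'$ along a contour from $\infty$ to $e^{i\al}$ and using pointwise inequalities comparing $\arctan$ and $\arctanh$ on the two pieces of the contour. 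Nothing in your proposal anticipates this comparison with the unconstrained potential or the contour estimate, and it is not a computation that falls out of "differentiate and check the sign." In the same spirit, your claim in \ref{thm:L_properties1} that $\tfrac{\partial L}{\partial\phi}$ on the saturated region "is a square root" is not what one gets; the paper's Lemma~\ref{Lphiin} derives an expression involving logarithms and the densities $\rho,\tilde\rho,\hat\rho$ and then argues positivity by a careful rearrangement, not by reading off a sign from a square root.

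For \ref{thm:L_properties4} your plan is to show that $\tilde L(1/2;b,\tilde\xi)<0$ directly for all $\tilde\xi\ge 1$; but the paper first reduces, via \ref{thm:L_properties1} and Lemma~\ref{Lphi}\ref{Lphib}, to the single extremal point $\phi=\al$, $\xi=\pi/\al$ (i.e.\ $\tilde\xi=1$), and then the genuine content is Lemma~\ref{cl}: the one-variable function $\al\mapsto L(\al;\al,\pi/\al)=\tfrac12\int_0^1\ln\bigl(2(1-\cos(2\al\theta))\bigr)\,d\theta$ is strictly concave, vanishes at $\al=\pi/2$ and $\al=\pi$ (Lemma~\ref{Lpiovertwo}), and is therefore negative for $\al<\pi/2$. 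This concavity argument is also what underlies the positivity $\tilde L(1/2;b,1)>0$ for $b\in(1,2)$ that you invoke in \ref{thm:L_properties5}; without it you cannot launch the intermediate-value argument for the existence of $\tilde\xi_b$. Your proposal does not contain this idea, so both \ref{thm:L_properties4} and the existence half of \ref{thm:L_properties5} are left with a genuine gap. Finally, the paper handles the existence of the threshold in \ref{thm:L_properties5} slightly differently from what you describe: rather than evaluate at $\tilde\xi=1$ directly, it observes that as $\xi\to\infty$ every fixed $\phi$ falls into the band where $L=l/2$, and that $l<0$ by Proposition~\ref{prop:lm}; combined with the monotonicity in $\xi$ from Lemma~\ref{Lphi}\ref{Lphib} this yields the threshold. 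Either route works once the concavity lemma and the monotonicity in $\xi$ are in hand, but both of those are exactly what your sketch assumes rather than proves.
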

\begin{rem}\label{rem1}
Based on numerical computations, it seems that there is a very simple formula for the critical value $\tilde\xi_b$:
\[
\tilde\xi_b = \frac{1}{b-1}, \qquad b\in (1,2].
\]
However we are unable to prove this formula analytically.
\end{rem}

 We now present the asymptotic formula for  $\varphi^{N}_M(e^{\frac{2\pi i}{b}x})$ outside of the interval $(-\tilde\beta,\tilde\beta)$. For each of these results we again make the technical assumptions that $N$ is odd, and $Nb$ is an integer. Once again, these assumptions are purely technical to ease the analysis.

\begin{thm}\label{upper}
Let $N$ and $M$ approach infinity in such a way that $N$ is odd and the ratio $N/M=\tilde\xi\ge 1$ is bounded. Also let $b>1$ such that $Nb\in \Z$, let $\tilde\be\in(0,\frac{1}{2})$ be defined by \eqref{def:tbeta}, and 
fix $x$ such that $\tilde\be<x<1/2$. Then the orthonormal polynomial $\varphi^{N}_M(e^{\frac{2\pi i}{b}x})$ satisfies
\begin{equation}\label{mr4ls}
\varphi^{N}_M(e^{\pm \frac{2\pi i}{b}x})= e^{M (\tilde{L}(x)-\tilde{L}(\tilde\beta))}\bigg[F(x)\sin\left(\pi N x\right)+\bigO(e^{-cM})\bigg]\bigg[1+\bigO(M^{-1})\bigg],
\end{equation}
where $c>0$ and $F(x)$ is a non-vanishing (complex) bounded analytic function of $x$ depending on both $b$ and $N/M$ (see Proposition \ref{thmasym2}). The error terms are uniform on compact subsets of $(\tilde\be,\frac{1}{2})$.
\end{thm}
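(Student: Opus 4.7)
The approach is Riemann--Hilbert steepest descent for discrete orthogonal polynomials in the Baik--Kriecherbauer--McLaughlin--Miller framework, adapted to the circular-arc geometry of the present problem. First I would set up the Fokas--Its--Kitaev $2\times 2$ matrix RHP with simple poles at the $N$ nodes $z_j=e^{2\pi i x_j/b}$ encoding the discrete orthogonality \eqref{def:orthoprod}. The assumption that $N$ is odd is precisely what makes $\sin(\pi N x_j)=0$ for every $j$, so setting $w(z)=\sin(\pi N x)$ with $z=e^{2\pi i x/b}$, a diagonal conjugation by $w^{\pm 1}$ on opposite sides of the arc converts the $N$ discrete poles into a continuous jump, reducing the problem to a standard continuous RHP on the arc.

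The next stage is the $g$-function transformation. The associated constrained equilibrium problem saturates the upper constraint (node density) on $[-\tfrac12,-\tilde\be]\cup[\tilde\be,\tfrac12]$ and has a smooth density on the band $(-\tilde\be,\tilde\be)$, with the band endpoint $\tilde\be$ given by \eqref{def:tbeta} and $\tilde L$ of \eqref{mr1b} playing the role of (the real part of) the associated $g$-function. After the $g$-transformation I would open lenses around the band and perform the saturated-region transformation that exchanges the two diagonal entries of the jump matrix on $[-\tfrac12,-\tilde\be]\cup[\tilde\be,\tfrac12]$. The outer parametrix on the arc is solved by a Szeg\H{o}-type function, which accounts for the arc (rather than the full circle) and produces the amplitude $F(x)$, while local Airy parametrices are placed at $\pm\tilde\be$; a standard small-norm estimate then controls the error.

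Unraveling the chain of transformations at a point $x\in(\tilde\be,\tfrac12)$ produces the stated formula. The exponential factor $e^{M(\tilde L(x)-\tilde L(\tilde\be))}$ comes from the $g$-function growth in the saturated region, with the constant $\tilde L(\tilde\be)$ arising as the Lagrange multiplier of the equilibrium problem; the $\sin(\pi N x)$ factor is the residue of the Step~1 conjugation that survives the saturated-region transformation on one side of the arc; $F(x)$ is inherited from the outer parametrix; the multiplicative $1+\bigO(M^{-1})$ comes from matching the Airy parametrix to the outer parametrix near $\tilde\be$; and the additive $\bigO(e^{-cM})$ bookkeeps the exponentially small leakage of the band-lens jumps into the saturated region. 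Uniformity on compact subsets of $(\tilde\be,\tfrac12)$ is inherited from the uniformity of the small-norm estimate. The main technical obstacle is the local parametrix at $\tilde\be$: there the equilibrium density vanishes like a square root and simultaneously meets the upper constraint, so the Airy model must match both the lens structure on the band side and the saturated-region transformation on the other side, and the outer Szeg\H{o} parametrix must be controlled uniformly as $x$ varies in compact subsets of $(\tilde\be,\tfrac12)$; this matching is the only genuinely delicate step, and once it is in place the extraction of \eqref{mr4ls} is routine bookkeeping.
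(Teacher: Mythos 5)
Your proposal follows the paper's strategy closely: interpolation problem (FIK RHP with poles at the nodes), pole-to-jump reduction using a nodal function vanishing at the $N$ sample points, $g$-function from the constrained equilibrium problem, lens opening plus the saturated-region transformation, a Szeg\H{o}-type outer parametrix together with Airy parametrices at $e^{\pm i\tilde\be}$, and a small-norm argument; and your identification of where each factor in \eqref{mr4ls} originates (growth from $\tilde L(x)-\tilde L(\tilde\be)$ with $\tilde L(\tilde\be)=l/2$, the $\sin(\pi N x)$ from the nodal function, $F(x)$ from the outer parametrix, the two error terms from the small-norm expansion and leakage from the band) is exactly right. Two details are off, however. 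First, the pole removal is \emph{not} a diagonal conjugation: conjugating the FIK solution by $w^{\sigma_3}$ would clear the poles in the $(1,2)$ entry only to introduce worse poles in the $(2,1)$ entry. In the paper (and in the BKMM framework you cite) the transformation is by right-multiplication with \emph{triangular} matrices, and crucially these must be upper triangular in the (would-be) band region and lower triangular in the saturated region — see $\mathbf D^u_\pm$ and $\mathbf D^l_\pm$ in \eqref{red4}--\eqref{red4a}; your phrasing merges this with the later ``saturated-region transformation'' and misnames it. Second, besides the Airy parametrices at $e^{\pm i\tilde\be}$, the paper also needs a Gamma-function (hard-edge) parametrix at the arc endpoints $e^{\pm i\al}$ (Section \ref{sec:hard_edge}); without it the jump matrices involving $(1-z^m)^{\pm 1}$ near $e^{\pm i\al}$ are not close to the identity and the small-norm estimate does not apply. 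This parametrix does not appear in the final formula for $x$ on compact subsets of $(\tilde\be,1/2)$, but it is required for the error analysis to close, so it should not be omitted from the outline.
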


This theorem implies that the orthonormal polynomials $\varphi^{N}_M(e^{ \frac{2\pi i}{b}x})$ have zeroes which are exponentially close to those sample points $x_j=\frac{j}{N}-\frac{1}{2}-\frac{1}{2N}, \quad j=1,2,\ldots, N$, which are outside the interval $[-\tilde\beta, \tilde\beta]$. Using \eqref{ubq},  this implies that the error terms $B_{N,M}^k(x)$ are well controlled close to these sample points. However, if $x$ is not close to the sample points, say half-way between two sample points, then the orthonormal polynomials become exponentially large due to the exponential factor $e^{M (\tilde{L}(x)-\tilde L(\tilde\beta))}$ and the fact that $\tilde{L}(x)-\tilde L(\tilde\beta)$ is strictly positive and in fact increasing on the interval $x\in (\tilde\be, 1/2)$. This suggests that $\left\lvert B_{N,M}^k(x)\right\rvert$ could become large between sample points. To see that this is indeed the case, we need a finer estimate on the error terms using the monic orthogonal polynomials.

To obtain a more precise estimate we analyze the quantities $B^{k}_{N,M}(x)$ more directly. Note that \eqref{def:Bk} can be written in terms of the Christoffel--Darboux kernel \eqref{km} as
\begin{equation}\label{BCD}
B^{k}_{N,M}(x) = e^{2\pi i x(k-M_0)/b}- \frac{1}{N} e^{-2\pi i M_0 x/b}\sum_{j=1}^N K_M(x, x_j)e^{2\pi i k x_j/b}.
\end{equation}
\begin{thm}\label{lower}
Fix $l\in \Z_+$, and let $k = \frac{M-1}{2} + l$, assuming $M$ is odd. For large enough $M$, there are  $c_l>0$ and $d_l>0$ independent of $M$ such that
\begin{equation}\label{resga}
c_l M^l  e^{M \tilde{L}(\tilde\beta)} \left\lvert\varphi^{N}_M(e^{\frac{2\pi i}{b}x})\right\rvert \le\left\lvert B^{k}_{N,M}(x) \right\rvert \le d_l M^l  e^{M \tilde{L}(\tilde\beta)}\left\lvert \varphi^{N}_M(e^{\frac{2\pi i}{b}x})\right\rvert.
\end{equation}
\end{thm}
Combining Theorems \ref{upper} and \ref{lower}, we obtain the following corollary which states that $\left\lvert B^{k}_{N,M}(x) \right\rvert$ may be exponentially large in $M$ when $|x|>\tilde{\beta}$ and $x$ is not near one of the sample points. This is one of the primary results of this paper. 
\begin{cor}\label{maincor}
Fix $l\in \Z_+$, let $k = \frac{M-1}{2} + l$, assuming $M$ is odd, and let $b\in(1,2]$ such that $Nb\in \N$. Fix $\epsilon>0$ and define the $N$-dependent set 
\begin{equation}
\Xi_N:= \{ x\in [-1/2, 1/2] : |x-x_j|>\epsilon/N \ \textrm{for all of the sample points} \ x_j\}.
\end{equation}
Then as $M,N\to\infty$ such that $N/M = \tilde{\xi} < \tilde\xi_b$, the quantity $\left\lvert B^{k}_{N,M}(x) \right\rvert$ is exponentially increasing in $M$ for $x$ in the set
\begin{equation}
\Xi_N \cap \{ |x|>\tilde\beta\}.
\end{equation}

\end{cor}

 
Note that Theorem \ref{lower} gives information about $B_{N,M}^k(x)$ only for $k=(M-1)/2+l$ with $l$ fixed as $M\to\infty$. Ideally one would like to obtain estimates on $B_{N,M}^k(x)$ for all $k\in \Z$, but it is unfortunately beyond the scope of the current paper. Nevertheless, it is interesting to see that for certain values of $b$ and $\tilde\xi$, $\left\lvert B^{k}_{N,M}(x) \right\rvert$ is exponentially large in $M$ for certain intervals of $x$-values close to the end-points of the interval $[-1/2,1/2]$. Of course even if the terms $\left\lvert B^{k}_{N,M}(x) \right\rvert$ are exponentially large in $M$, the sum \eqref{Error_series} could still be small due to exponential decay of the Fourier coefficients $a_k$ or cancellation of terms, but Theorem  \ref{lower}  and parts \ref{thm:L_properties4} and \ref{thm:L_properties5} of Proposition \ref{thm:L_properties} indicate that it may be desirable to choose $b>2$ or $\tilde\xi > \tilde\xi_b$ if $b\in (1,2)$. We emphasize that even this choice of parameters does not {\it guarantee} a small error, since we are unable to estimate $|B_{N,M}^k(x)|$ for $k$ on the order of $aM$ with $a>1$.  It does agree with numerical evidence, which suggests that $b=2$ is generally sufficient for exponential convergence, see e.g., \cite{Adcock-Ruan14, brhapo}.

Theorems \ref{main_band} and \ref{upper} imply that the error is small throughout the interval $(-\tilde\beta, \tilde\beta)$ as well as near the sample points $x_j=\frac{j}{N}-\frac{1}{2}-\frac{1}{2N}, \quad j=1,2,\ldots, N$ which are outside the interval $[-\tilde\beta, \tilde\beta]$, while it may be large outside of $(-\tilde\beta, \tilde\beta)$ when $x$ is between two sample points.  In the language of discrete orthogonal polynomials \cite{BKMM}, the interval $(-\tilde\beta, \tilde\beta)$ is called a {\it band}, and the intervals $[-1/2, -\tilde\beta)$ and $(\tilde\beta, 1/2]$ are called {\it saturated regions}. Below we explain this terminology and describe how Theorems \ref{main_band} and \ref{upper} may be generalized to non-uniform samplings.

\subsection{General orthogonal polynomial theory and heuristics}\label{heuristics}

Denote the set of equispaced sample points as $L_N$, so
 \begin{equation}\label{def:L_N}
L_N:=\left\{\frac{j}{N}-\frac{1}{2}-\frac{1}{2N}, \quad j=1,2,\ldots, N\right\}.
\end{equation}
In the variable $z=e^{2\pi i x/b}$ the orthogonal polynomial $\varphi_M^N(z)$ defined in \eqref{def:orthoprod} may be written as the discrete Heine formula \cite[Theorem 1.513]{Simon-OPUC-1}\footnote{In fact \cite[Theorem 1.513]{Simon-OPUC-1} gives a formula for the $M$-th orthogonal polynomial as a Toeplitz-like determinant. It is a simple exercise to go from this determinant to the multiple sum in \eqref{eq:Heine1}. See, e.g., \cite[Proposition 3.8]{Deift99} for a similar computation for orthogonal polynomials on the real line.}
\begin{equation}\label{eq:Heine1}
\varphi_M^N(z) = \frac{1}{D_{M,N}} \sum_{x_1, \dots x_M\in L_N} \prod_{j=1}^M(z-e^{2\pi ix_j/b}) \prod_{1\le j<k\le M} {\left\lvert e^{2\pi ix_k/b}-e^{2\pi ix_j/b}\right\rvert}^2,
\end{equation}
where $D_{M,N}$ is a constant which ensures that $\langle \varphi_M^N, \varphi_M^N \rangle_N = 1$. Recall that $M\le N$, and note that the multiple sum in \eqref{eq:Heine1} is over the set of all $M$-tuples of sample points in $L_N$. The normalizing constant $D_{M,N}$ is given as
\begin{equation}\label{D_multi_sum}
\begin{aligned}
\left(D_{M,N}\right)^2:=&\sum_{x\in L_N} \sum_{y_1, \dots y_M\in L_N} \sum_{x_1, \dots x_M\in L_N} \prod_{j=1}^M(e^{2\pi i x/b}-e^{2\pi ix_j/b})(e^{-2\pi i x/b}-e^{2\pi iy_j/b}) \\
&\times  \prod_{1\le j<k\le M} {\lvert e^{2\pi ix_k/b}-e^{2\pi ix_j/b}\rvert}^2{\lvert e^{2\pi iy_k/b}-e^{2\pi iy_j/b}\rvert}^2,
 \end{aligned}
 \end{equation}
which ensures that $\langle \varphi_M^N, \varphi_M^N \rangle_N = 1$. If we denote by $\nu_{\bf x}$ the normalized counting measure on the points $x_1, \dots, x_M$, 
\begin{equation}\label{eq:counting_measure}
\nu_{\bf x} = \frac{1}{M} \sum_{j=1}^M \de_{x_j},
\end{equation}
then the above integral can be written as
\begin{equation}\label{eq:Heine2}
\varphi_M^N(z) = \frac{1}{D_{M,N}} \sum_{x_1, \dots , x_M\in L_N} \exp\left[M \int \log(z-e^{2\pi iy/b})d\nu_{\bf x}(y)\right]\exp\left[-M^2 \tilde H(\nu_{\bf x})\right],
\end{equation}
where $\tilde H(\nu)$ is the functional
\begin{equation}\label{eq10a}
\tilde H(\nu)=\iint_{x\not=y} \log \frac{1}{|e^{2\pi ix/b}-e^{2\pi iy/b}|}d\nu (x)d\nu (y).
\end{equation}
Since there is a factor $M^2$ in the exponent, we expect the primary contribution in this sum as $M\to\infty$ to come from a minimizer of the functional $\tilde H(\nu)$. 
If we consider a regime in which $M, N\to\infty$, and the ratio $N/M$ remains bounded, then we find that for large $M$, the measures $\nu_{\bf x}$ converge to probability measures absolutely continuous with respect to Lebesgue measure with density not exceeding the ratio $N/M$.
Thus we minimize over all Borel measures $\nu$ on $[-1/2, 1/2]$ satisfying the following two properties:
\begin{enumerate}
\item The measure $\nu$ is a probability measure, i.e. $\int_{-1/2}^{1/2} d\nu(x) =1$.
\item The measure $\nu$ does not exceed the limiting density of nodes $x_1, \dots, x_N$ as $N, M\to\infty$. That is, $0\le \nu \le \sg \tilde\xi$, where $\sg$ is the Lebesgue measure and $\tilde\xi := \frac{N}{M}$.
\end{enumerate}
As noted in the Introduction, the problem of an equilibrium measure with constraint was studied by Rakhmanov \cite{Rakhmanov96} for a special case and a general study was done by Dragnev and Saff \cite{Dragnev-Saff97}.
Such a minimizer exists and is unique and we refer to it as the {\it equilibrium measure}, denoted $\tilde\nu_{\eq}$.
Then the formula \eqref{eq:Heine2} indicates heuristically that for large $M$,
\begin{equation}\label{eq14}
\varphi_{M}^N(z) \sim \frac{e^{-M^2 E_0}}{D_{M,N}}\exp\left(M\int_{-1/2}^{1/2} \log(z-e^{2\pi iy/b})d\tilde\nu_{\eq}(y)\right),
\end{equation}
where $E_0:=H(\tilde\nu_{\eq})$.
The equilibrium measure $\tilde\nu_{\eq}$ is uniquely determined by the {\it Euler--Lagrange variational conditions}:
there exists a {\it Lagrange multiplier} $l$ such that
\begin{equation}\label{eq15a}
2\int \log|e^{2\pi ix/b}-e^{2\pi iy/b}| d\tilde\nu_{\eq} (y) \left\{
\begin{aligned}
&\geq l \quad \textrm{for}\quad x \in \supp \tilde\nu_{\eq}\\
&\leq l \quad \textrm{for}\quad x \in \supp (\tilde\xi\sg-\tilde\nu_{\eq}).
\end{aligned}\right.
\end{equation}
The support of the equilibrium measure can be divided into two pieces: one in which the upper constraint is active, $\supp \tilde\nu_{\eq}\setminus \supp (\tilde\xi\sg-\tilde\nu_{\eq})$; and one in which it is not, $\supp \tilde\nu_{\eq}\cap \supp (\tilde\xi\sg-\tilde\nu_{\eq})$. The former is referred to as the {\it saturated region} and the latter as the {\it band}. Later we show that the band is the interval $(-\tilde\beta,\tilde\beta)$ and the saturated region consists of the two intervals $(-1/2,-\tilde\beta)\cup (\tilde\beta,1/2)$, so Theorem \ref{main_band} refers to the behavior of $\varphi_{M}^N(e^{2\pi ix/b})$ in the band, and Theorem \ref{upper} refers to the behavior in the saturated region.
In the band we have
\begin{equation}\label{eq16}
2\int \log|e^{2\pi ix/b}-e^{2\pi iy/b}| d\tilde\nu_{\eq} (y)=l
  \quad \textrm{for}\quad x \in \supp \tilde\nu_{\eq}\cap \supp (\tilde\xi\sg-\tilde\nu_{\eq}),
\end{equation}
and for $x \in \supp \tilde\nu_{\eq}\cap \supp (\tilde\xi\sg-\tilde\nu_{\eq})$ we have that
\begin{equation}\label{eq16aa}
\varphi_{M}^N(e^{2\pi i x/b}) \sim \frac{e^{-M^2 E_0+Ml/2}}{D_{M,N}}.
\end{equation}
A similar heuristic argument starting from \eqref{D_multi_sum} indicates that 
\begin{equation}\label{eq16b}
D_{M,N}\sim e^{M^2 E_0-Ml/2},
\end{equation}
thus $\varphi_{M}^N(e^{2\pi i x/b}) =\bigO(1)$ for $x \in \supp \tilde\nu_{\eq}\cap \supp (\tilde\xi\sg-\tilde\nu_{\eq})$.

On the other hand, for $x$ in the saturated regions, 
\begin{equation}\label{eq16a}
2\int \log|e^{2\pi ix/b}-e^{2\pi iy/b}| d\tilde\nu_{\eq} (y)\ge l
  \quad \textrm{for}\quad x \in \supp \tilde\nu_{\eq} \setminus \supp (\tilde\xi\sg-\tilde\nu_{\eq}).
\end{equation}
We will show in Section \ref{L_properties} that this inequality is in fact strict in the intervals $[-1/2, -\tilde\beta) \cup (\tilde\beta, 1/2]$, so \eqref{eq14}, \eqref{eq16aa}, and \eqref{eq16b} imply that 
\begin{equation}\label{def:tildeL}
\varphi_{M}^N(e^{2\pi ix/b}) \sim e^{M(\tilde L(x)-l/2)}, \quad \tilde L(x) =  \int_{-1/2}^{1/2} \log|e^{2\pi ix/b}-e^{2\pi iy/b}|\, d\tilde\nu_{\eq} (y)  >l/2.
\end{equation}
Thus $\left\lvert\varphi_{M}^N(e^{2\pi ix/b})\right\rvert$ is exponentially large in $M$ for $x\in [-1/2, \tilde\beta) \cup (\tilde\beta, 1/2]$. But $\langle \varphi_{M}^N, \varphi_{M}^N\rangle = 1$, so $\left\lvert \varphi_{M}^N(e^{2\pi ix_j/b})\right\rvert$ cannot be large when $x_j\in L_N$. We conclude then that $\left\lvert\varphi_{M}^N(e^{2\pi ix_j/b})\right\rvert$ oscillates very regularly in the saturated region, nearly vanishing at each node of $L_N$, and then growing exponentially large between nodes, as in Theorem \ref{upper}. This is indeed the meaning of the term saturated region.  A well known property of polynomials  on the unit circle is that all their zeros lie strictly
inside the circle, and the polynomials \eqref{def:orthoprod} are in a class whose zeroes approach the circle as $M\to\infty$, but the discrete measure constrains the number of zeroes which can approach any mass point to at most one, see \cite[Theorem 1.7.20]{Simon-OPUC-1}. Thus the zeroes are saturated to the maximal density allowed by the discrete measure.

\subsection{The effect of the sampling density}

From the previous subsection, we find that the orthonormal polynomial $\left\lvert \varphi_{M}^N(e^{2\pi ix/b})\right\rvert$ oscillates with exponentially large amplitude for $x$ in the saturated region $(-1/2,-\tilde\beta)\cup(\tilde\beta,1/2)$, and is order 1 in the band $(-\tilde\beta,\tilde\beta)$. We note here the similarity with a result of Rakhmanov \cite[Theorem 1]{Rakhmanov07}, who showed that any polynomial of degree $M$ with unit discrete norm $|| \cdot ||_N$ is necessarily uniformly bounded in the interval $[-r,r]$, where 
\begin{equation}\label{def:band_r}
r:=\frac{ \sqrt{1-M^2/N^2}}{2}.
\end{equation}
This result does not directly apply to our case since we are dealing with trigonometric polynomials, but the similarity in the results is striking.
Since $\left\lvert \varphi_M^N(e^{2\pi i xb})\right\rvert$ is only large in the saturated region, it may be advantageous to make the saturated region as small as possible. From \eqref{def:tbeta} we find that $\tilde\beta$ is increasing in the sampling density $\tilde\xi=N/M$ and $\tilde\beta = 1/2-\bigO(1/\tilde\xi^{2})$ as $\tilde\xi \to\infty$. Thus increasing the sampling density makes the saturated region smaller, but the saturated region exists whenever the sampling scheme is comprised of equispaced data and $N=\bigO(M)$. 

If one were to take $N$ much larger than $M$, say $N=\bigO(M^{1+\ep})$ for some $\ep>0$, then the saturated region would vanish in the limit as $M\to\infty$. However, for finite $M$, there is still a saturated region in a neighborhood of $x=\pm 1/2$ of order $\bigO(M^{-2\ep})$. This can be seen by writing $N=$const.$\cdot M^{1+\ep}$ in \eqref{def:tbeta} and solving for $\tilde \be$ as $M\to\infty$. Heuristically, this saturated region is negligible if it is smaller than the spacing between sample points, which is $\bigO(M^{1+\ep})$. This implies that the saturation phenomenon is detectable on a shrinking interval when $N=\bigO(M^{\kappa})$ for $1<\kappa<2$. If $N=\bigO(M^2)$, then the size of the saturated region is of the same order as the spacing between sample points, and therefore plays no role. It is already known that  $N=\bigO(M^2)$ is necessary and sufficient for the Fourier extension approximation to be well conditioned, see \cite{adhum, Platte-Trefethen-Kuijlaars11}. The heuristic explanation above seems to indicate a similar result for the convergence: uniform convergence of the Fourier extension approximation is guaranteed for $N=\bigO(M^2)$, but not for $N=\bigO(M^{\kappa})$ with $\kappa<2$. The sufficiency of the condition $N=\bigO(M^2)$ for fast uniform convergence is suggested by the aforementioned result of Rakhmanov \cite{Rakhmanov07}, see also \cite{Schonhage61, Coppersmith-Rivlin92, Ehlich66, Ehlich-Zeller64, Ehlich-Zeller65}, with the caveat that those papers deal with polynomials rather than trigonometric polynomials. The necessity of this condition is strongly indicated by Theorem \ref{lower} and Corollary \ref{maincor}.

Instead of taking the sample points $x_j$ to be equally spaced, one could also take them to approach some non-constant density as $N\to\infty$. Indeed, suppose the $N$ sample points are taken such that the counting measure $\frac{1}{N} \sum_{j=1}^N \de_{x_j}$ converges weakly to some density $\varrho(x)$ as $N\to\infty$. Then all of the heuristic arguments of Section \ref{heuristics} are still valid with the constant upper constraint $\tilde\xi$ replaced by the variable constraint $\tilde\xi \varrho(x)$. That is, the equilibrium measure is obtained by minimizing the functional \eqref{eq10a} over the space of Borel probability measures $\nu$ satisfying $0\le \nu \le \tilde\xi \varrho(x)\sg$, where once again $\sg$ is the Lebesgue measure and $\tilde\xi = N/M$. 

\begin{figure}\label{unconstrained_plot}
\begin{center}
\scalebox{0.35}{\includegraphics{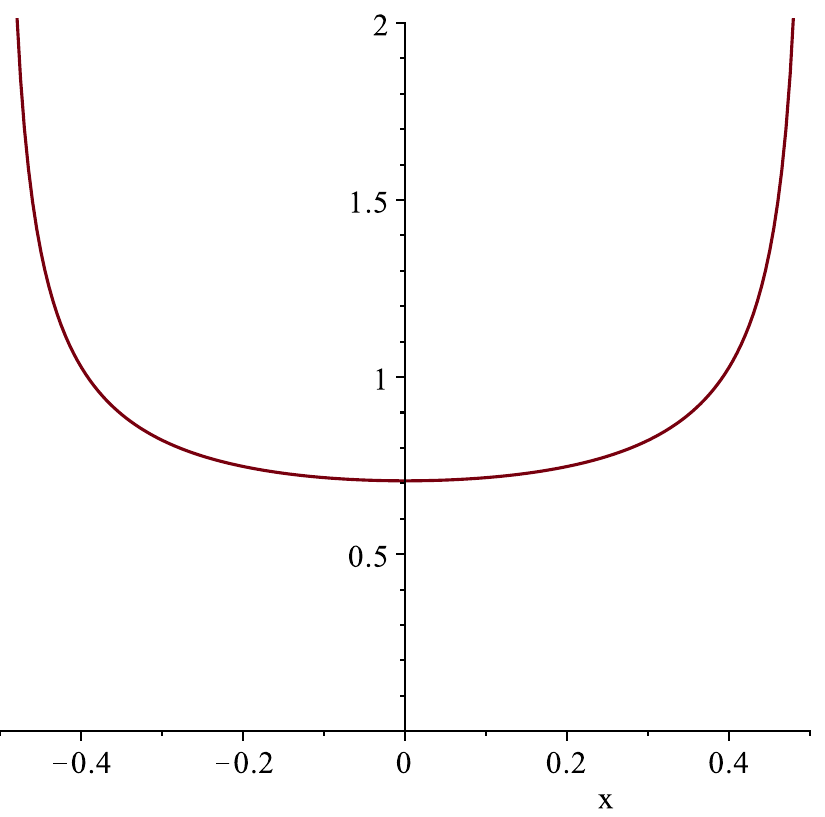}}
\caption{A plot of the unconstrained equilibrium measure for $b=2$.}
\label{unconstrained_plot}
\end{center}
\end{figure}

To determine which sampling densities $\varrho(x)$ will cause the saturated region to vanish, we can consider the unconstrained equilibrium problem, which simply minimizes \eqref{eq10a} over the space of Borel probability measures on $[-1/2,1/2]$. This is exactly the equilibrium problem which describes the asymptotic behavior of the continuous orthogonal polynomials, since the upper constraint is a manifestation of the discrete orthogonality. The unconstrained equilibrium problem can be solved explicitly, and its solution is 
\begin{equation}\label{unconstrained}
d\tilde\nu_{\eq}^c(x) = \frac{\sqrt{2}\cos(\pi x/b)}{b\sqrt{\cos(2\pi x/b)-\cos(\pi/b)}}dx.
\end{equation}
Thus in the constrained equilibrium problem, the upper constraint is only active if the sampling density $\tilde\xi \rho(x)$ is smaller than the unconstrained equilibrium density above. A plot of this density is given in Figure \ref{unconstrained_plot}. Note that this density diverges as $x\to \pm 1/2$, so any finite sampling density will produce saturated regions close the endpoints $\pm 1/2$. However if one were to take the sampling density $\tilde\xi \rho(x)$, to be exactly the unconstrained equilibrium measure, then there will be no saturated region as $M\to\infty$. Practically, for finite $M$ and $N$, this means sampling with a much higher density near the endpoints of the interval $[-1/2,1/2]$ than in the middle. Indeed this has been suggested in the literature, see \cite{adhum}. For results on the relationship between convergence rates and stability, see \cite{Platte-Trefethen-Kuijlaars11} for equispaced data, and \cite{Adcock-Platte-Shadrin18} for data which is not necessarily equispaced.

\subsection{Outline for the rest of the paper}
The plan for the rest of the paper is as follows. In Section \ref{asy_formulas} we present very precise asymptotic formulas for the monic orthogonal polynomials \eqref{def:orthoprod} which imply Theorems \ref{main_band}, \ref{upper}, and \ref{lower}. In Section \ref{L_properties} we prove the properties of $\tilde L(x)$ given in Proposition \ref{thm:L_properties}, and in Section \ref{main_tech_proof} we prove Proposition \ref{subexpgr}, which is the main technical ingredient in the proof of Theorem \ref{lower}. In Section \ref{eq_measure} we derive the main quantities necessary in the asymptotic analysis of the orthogonal polynomials  \eqref{def:orthoprod}, including the equilibrium measure and the function $\tilde{L}(x)$. Finally in Section \ref{RH_analysis} we prove the asymptotic results stated in Section \ref{asy_formulas} using the Riemann--Hilbert method.

\subsection{Acknowledgments}
Both authors are grateful to Oscar Bruno, John Boyd, Ben Adcock, and Vilmos Totik for helpful comments. KL is supported by the Simons Foundation through grant \#357872.

\section{Precise asymptotic formulas for the monic orthogonal polynomials}\label{asy_formulas}

In the results below, we make the technical assumption that $N$ is odd. Furthermore we assume that $b>1$ is chosen so that $Nb$ is an integer, and introduce the notations 
\begin{equation}\label{alb}
 \al:= \pi/b\, \qquad m:= Nb .
\end{equation}
Notice then that 
\begin{equation}
e^{im\al} = e^{iNb} = -1,
\end{equation}
so $e^{i\al}$ is exactly halfway between two $m$-th roots of unity.
Let $\T\subset \C$ be the unit circle and let $C_\al$ be the arc
\begin{equation}\label{in2}
C_\al=\left\{e^{i\theta} : -\al \le \theta \le \al \right\} \subset \T.
\end{equation}
Furthermore let
\begin{equation}\label{in1}
\lattice:=\left\{z\in C_\al : z^m=1\right\} \subset \T,
\end{equation}
be the set of $m$-th roots of unity which sit inside the arc $C_\al$. Note then that the orthogonality \eqref{def:orthoprod} can be written as
 \begin{equation}\label{def:orthoprod2}
 \frac{b}{m}\sum_{z\in \lattice} \overline{\varphi^{N}_k(z)}\varphi^{N}_\ell(z) = \de_{k\ell}, \quad z_j = e^{2\pi i x_j/b}.
 \end{equation}
In what follows it will be convenient to consider the monic versions of these orthogonal polynomials as well. That is, let $p_k(z) = z^k +\dots$ be the monic polynomial of degree exactly $k$ satisfying
 \begin{equation}\label{def:orthoprod3}
 \frac{1}{m}\sum_{z\in \lattice} \overline{p_k(z)} p_\ell(z) = h_k\de_{k\ell},
 \end{equation}
 for some sequence of positive constants $\{h_k\}_{k=0}^\infty$. These polynomials are related to the ones $\varphi^{N}_k(z)$ as
  \begin{equation}\label{phi_p_relation}
  \varphi^{N}_k(z)= \frac{p_k(z)}{\sqrt{bh_k}}.
  \end{equation}
 
Since the lattice $L_{\al, m}$ is symmetric about the real axis, the polynomials $p_k(z)$ have real coefficients. Also notice that these orthogonal polynomials only exist for $0\le k < | L_{\al, m} | = m\al/\pi=N$. As with all orthogonal polynomials on the unit circle, they satisfy the Szeg\H o recursion
\begin{equation}\label{in3a}
zp_j(z)=p_{j+1}(z)+\rho_{j+1}p_j^*(z) \quad \text{and} \quad p^*_j(z)=p^*_{j+1}(z)+\rho_{j+1}zp_j(z),
\end{equation}
where $p_j^*(z):=z^jp_j(z^{-1})$ is the reverse polynomial to $p_j(z)$, and $\rho_{j+1}=-p_{j+1}(0)$ is the Szeg\H o parameter. We have used the fact that due to the conjugate symmetry the Szeg\H o parameters $\rho_j$ are real. The normalizing constants are related to these parameters by (see e.g. \cite{Simon-OPUC-1})
 \begin{equation}\label{in3d}
  h_M = \prod_{j=0}^{M-1}\left(1-\rho_{j+1}^2\right).
  \end{equation}

 Below we state precise asymptotic formulas for the polynomials $p_M(z)$ and the normalizing constants $h_M$ as $M\to\infty$. The results are described in terms of the angle $\phi\in[-\al,\al]$ where $z=e^{i \phi}$. Note that in Section \ref{intro} we denoted $z=e^{2\pi i x/b}$ so to use these results to prove Theorems \ref{main_band}, \ref{upper}, and \ref{lower}, we must take $\phi = 2\pi x/b$.
 
 Our results are stated in terms of the equilibrium measure and Lagrange multiplier discussed in Section \ref{heuristics}. In the variable $\phi$, the equilibrium measure is defined as the unique measure on $[-\al,\al]$ which minimizes the functional
 \begin{equation}\label{eq10}
H(\nu)=\iint_{\phi\not=\theta} \log \frac{1}{|e^{i\phi}-e^{i\theta}|}d\nu (\phi)d\nu (\theta),
\end{equation}
in the space $\mcal$ of probability measures on $[-\al,\al]$, where
\begin{equation}\label{eq11}
\mcal=\left\{\nu:\;0\leq \nu\leq \frac{\xi}{2\pi}\sg,\;\;\nu[-\al,\al]=1\right\},
\end{equation}
\begin{equation}\label{in4}
\xi:= \frac{m}{M} >b>1,
\end{equation}
and $\sg$ is the Lebesgue measure. Note that $\xi = b\tilde\xi$. The Euler--Lagrange variational conditions, which determine the equilibrium measure $\nu_{\eq}$ uniquely, are
\begin{equation}\label{eq15}
2\int_{-\al}^{\al} \log|e^{i\phi}-e^{i\theta}| d\nu_{\eq} (\theta) \left\{
\begin{aligned}
&\geq l \quad \textrm{for}\quad \phi \in \supp \nu_{\eq}\\
&\leq l \quad \textrm{for}\quad \phi \in \supp \left(\frac{\xi}{2\pi}\sg-\nu_{\eq}\right),
\end{aligned}\right.
\end{equation}
where $l$ is the Lagrange multiplier.

The following propositions give formulas for the equilibrium measure and Lagrange multiplier.
\begin{prop}\label{EqMeasure}
The equilibrium measure is given by the formula
\begin{equation}\label{eq34}
d\nu_{\eq}(\th)=\rho(\th)d\th\,,
\end{equation}
where
\begin{equation}\label{eq35}
\rho(\th)=\left\{
\begin{aligned}
&\frac{\xi}{\pi^2}\arctan\left(\frac{\sqrt{2}\tan\left(\frac{\pi}{2\xi}\right)\cos(\th/2)}{\sqrt{\cos\th-\cos\be}}\right)\,, \qquad &\th\in [-\be, \be] \\
&\frac{\xi}{2\pi} \,, \qquad &\th\in [-\al, -\be] \cup [\be, \al],
\end{aligned}\right.
\end{equation}
where $\be \in (0,\al)$ is given by the equation
\begin{equation}\label{eq36}
\cos\be=\cos\al+(1+\cos\al)\tan\left(\frac{\pi}{2\xi}\right)^2.
\end{equation}
\end{prop}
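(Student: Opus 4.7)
Since the equilibrium measure is uniquely determined by the variational conditions \eqref{eq15}, the plan is to verify that the density $\rho(\theta)$ in \eqref{eq35} is a probability density bounded above by $\xi/(2\pi)$ and satisfying \eqref{eq15}. The pointwise bounds $0<\rho<\xi/(2\pi)$ on $(-\be,\be)$ and the symmetry $\rho(-\theta)=\rho(\theta)$ are immediate from the arctangent formula, and the value of $\be$ in \eqref{eq36} will be forced by the normalization $\int_{-\al}^{\al}\rho\,d\theta = 1$. The substantive work is (a) verifying the equality in \eqref{eq15} on the band $(-\be,\be)$ and (b) verifying the strict inequality on the saturated region $[-\al,-\be]\cup[\be,\al]$.

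For step (a), I would differentiate the variational equality in $\phi$, reducing it to the singular integral equation
\begin{equation*}
\mathrm{p.v.}\int_{-\al}^{\al}\cot\left(\tfrac{\phi-\theta}{2}\right)\rho(\theta)\,d\theta = 0, \qquad \phi \in (-\be,\be),
\end{equation*}
and recast it as a scalar Riemann--Hilbert-type problem for the Cauchy transform
\begin{equation*}
G(z) := \int_{-\al}^{\al}\frac{d\nu_{\eq}(\theta)}{z-e^{i\theta}}, \qquad z \in \C\setminus C_\al.
\end{equation*}
The equality on the band translates to a condition on $G_++G_-$ on the band sub-arc, while the saturation $\rho=\xi/(2\pi)$ on $[-\al,-\be]\cup[\be,\al]$ translates to a constant additive jump $G_+-G_-$ on the saturated sub-arcs. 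I would construct $G$ using the branch $R(z)=\sqrt{(z-e^{i\be})(z-e^{-i\be})}$ with cut along the band sub-arc, writing $G$ as a combination involving $1/R(z)$ and an explicit contour integral absorbing the constant jump on the saturated sub-arcs. The normalization $G(z) = 1/z + O(z^{-2})$ at infinity, which encodes the probability condition, then imposes a single scalar relation on $\be$ that reduces after trigonometric simplification to \eqref{eq36}. Extracting the jump of $G$ across the band sub-arc and using the identity
\begin{equation*}
\sqrt{(e^{i\phi}-e^{i\be})(e^{i\phi}-e^{-i\be})} = \sqrt{2}\,e^{i\phi/2}\sqrt{\cos\phi-\cos\be}, \qquad \phi\in(-\be,\be),
\end{equation*}
then recovers the arctangent formula \eqref{eq35} as the imaginary part of a logarithm of a rational expression in $e^{i\phi/2}$.

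For step (b), both sides of \eqref{eq15} are continuous at $\phi = \pm\be$ and agree there by step (a), so it suffices to show that the left-hand side of \eqref{eq15} is strictly decreasing in $\phi$ on $(\be,\al)$; the symmetric statement on $(-\al,-\be)$ follows from $\theta \mapsto -\theta$. The derivative in $\phi$ is again a cotangent principal-value integral that can be evaluated directly from the explicit $G$ obtained in (a), and its sign is read off from the square-root factorization. The main obstacle is step (a): assembling the explicit form of $G$ so that both the band equality and the constant saturated-region jump hold simultaneously with the correct behavior at infinity, and then performing the algebraic simplification on the band that produces the clean arctangent form. In particular, the equation \eqref{eq36} for $\be$ must emerge as precisely the scalar condition ensuring that the $O(1)$ coefficient of $G$ at infinity vanishes, leaving only the $1/z$ leading behavior dictated by $\nu_{\eq}$ being a probability measure.
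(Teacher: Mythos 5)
Your step (a) is essentially the paper's own proof (Section \ref{EqMeasureProof}): both recast the variational equality on the band and the constant density on the saturated arcs as an additive scalar Riemann--Hilbert problem for the resolvent $\omega(z)=g'(z)$ of $\nu_\eq$, divide by the square root $\sqrt{(z-e^{i\be})(z-e^{-i\be})}$ with cut on $C_\be$ so that the band condition $\omega_++\omega_-=1/z$ becomes a pure jump, solve by the Plemelj formula (the paper's $w_1$ is your ``$1/R$'' piece, its $w_2$ the contour integral absorbing the constant jump on $C_\al\setminus C_\be$), determine $\be$ from the $z\to\infty$ asymptotics of the resolvent, and read $\rho$ off the jump of $\omega$ across the band --- which is exactly how \eqref{eq36} and the arctangent form \eqref{eq35} emerge. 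Your plan to also verify the variational \emph{inequality} in the saturated region is a sound addition (the paper defers it to Lemma \ref{Lphiin}, invoked in the proof of Proposition \ref{thm:L_properties}\ref{thm:L_properties1}, rather than treating it inside the proof of Proposition \ref{EqMeasure}).

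There is, however, a sign error in your step (b). On the saturated region $[\be,\al]\cup[-\al,-\be]$ the constraint is active, so it lies in $\supp\nu_{\eq}$ but not in $\supp(\tfrac{\xi}{2\pi}\sg-\nu_{\eq})$; by the first line of \eqref{eq15} one must therefore show $2\int\log|e^{i\phi}-e^{i\theta}|\,d\nu_{\eq}(\theta)\ge l$ there. Since equality holds at $\phi=\be$, this requires the potential $L(\phi)$ to be \emph{increasing} on $(\be,\al)$, not decreasing as you wrote; this is precisely the content of Lemma \ref{Lphiin}. (You may be importing intuition from the unconstrained void region, where the opposite inequality $\le l$ holds and the potential decreases away from the support.) Your method --- compute the derivative of the potential from the explicit boundary values of $\omega$ and read off the sign --- would in fact reveal the correct direction once carried out, since on $C_\al\setminus C_\be$ the argument of $\arctan$ in \eqref{eq37} becomes purely imaginary with modulus exceeding $1$, contributing a real additive term that makes $L'>0$; but as stated, the claimed inequality is backwards.
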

Proposition \ref{EqMeasure} is proved in Section \ref{EqMeasureProof}.
\begin{prop}\label{prop:lm}
The Lagrange multiplier is given by the formula
\begin{equation}\label{eq39}
\begin{aligned}
l=2\int_{-\al}^\al \log|1-e^{i\th}|\rho(\th)\,d\th=-\frac{2\xi\tan\left(\frac{\pi}{2\xi}\right)}{\pi}\int_{1}^{\sqrt{\frac{2}{1-B}}}\frac{\log \left[\frac{1+Bx^2+x\sqrt{1+B}\sqrt{2-x^2(1-B)}}{x^2-1}\right]}{1+x^2 \tan\left(\frac{\pi}{2\xi}\right)^2}\,dx,
\end{aligned}
\end{equation}
where $B:=\cos\beta.$
In particular, $l<0$ for all $\al\in (0,\pi)$ and $\xi>1$, and
\begin{equation}\label{lm10}
\lim_{\xi\to\infty} l=-\int_{1}^{\sqrt{\frac{2}{1-A}}}\log \left[\frac{1+Ax^2+x\sqrt{1+A}\sqrt{2-x^2(1-A)}}{x^2-1}\right]\,dx<0\,,
\end{equation}
where $A:=\cos\alpha$. Note that $\lim_{\xi\to\infty} \beta = \alpha$, so $A=\cos\alpha=\cos\beta=B$ in this limit.
\end{prop}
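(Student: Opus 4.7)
The plan is to establish the first equality via Euler--Lagrange, then reduce the resulting log-potential at the origin to the claimed single integral, and finally extract the sign and $\xi\to\infty$ limit from that form. For the first equality, I would evaluate \eqref{eq15} at $\phi=0$. Since $\be>0$, the origin lies in the band $(-\be,\be) = \supp\nu_{\eq}\cap\supp(\xi\sg/(2\pi)-\nu_{\eq})$, where both inequalities in \eqref{eq15} are simultaneously active, so equality holds and $l = 2\int_{-\al}^\al\log|1-e^{i\th}|\rho(\th)\,d\th$.

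To reach the closed form, I would substitute \eqref{eq35}, use $\rho(-\th)=\rho(\th)$ to reduce to $l = 4\int_0^\al\log(2\sin(\th/2))\rho(\th)\,d\th$, and split the integral into the band $[0,\be]$ and the saturated $[\be,\al]$ parts. On the band, write the arctan in \eqref{eq35} as $\arctan f(\th) = \int_0^{f(\th)} dt/(1+t^2)$, where $f(\th)$ is the argument of the arctan, and interchange the order of integration. The substitution $t = x\tan(\pi/(2\xi))$ then produces the denominator $1+x^2\tan^2(\pi/(2\xi))$; solving $f(\th) = x\tan(\pi/(2\xi))$ for $\th$ in terms of $x$ and applying the identity
\[
1+f(\th)^2 = \frac{(1+B)(\cos\th-A)}{(1+A)(\cos\th-B)},
\]
which follows directly from the defining relation \eqref{eq36}, extracts precisely the rational expression under the log in \eqref{eq39}. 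The saturated piece combines with the boundary terms from this change of variable to extend the $x$-integration all the way up to $\sqrt{2/(1-B)}$. This reduction is the main technical step: the bookkeeping of Jacobians, endpoint contributions, and the prefactor $-2\xi\tan(\pi/(2\xi))/\pi$ is where the work lies, and the displayed algebraic identity is the key input that makes the final integrand recognizable.

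For negativity and the $\xi\to\infty$ limit I would work directly with the integrand in \eqref{eq39}. The substitution $x = \sqrt{2/(1-B)}\cos\eta$ for $\eta\in[0,\pi/2-\be/2]$ collapses the bracket inside the log, via $\cos 2\eta+\cos\be = 2\cos(\eta+\be/2)\cos(\eta-\be/2)$ for the denominator and $1+\cos\be\cos 2\eta+\sin\be\sin 2\eta = 2\cos^2(\eta-\be/2)$ for the numerator, to the simple ratio $\cos(\eta-\be/2)/\cos(\eta+\be/2)$. Since $|\eta-\be/2|\le\eta+\be/2$ on the integration range and cosine is positive and even on $[-\pi/2,\pi/2]$, this ratio is at least $1$, so the log is nonnegative; with the negative prefactor and positive denominator this yields $l<0$. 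For \eqref{lm10}, observe that $\tan(\pi/(2\xi))\to 0$ and $\xi\tan(\pi/(2\xi))\to \pi/2$ as $\xi\to\infty$, so the prefactor $-2\xi\tan(\pi/(2\xi))/\pi\to -1$, the denominator $1+x^2\tan^2(\pi/(2\xi))\to 1$, and $B\to A$ by \eqref{eq36}; the collapsed form of the log provides an integrable dominating function near the moving upper endpoint $\sqrt{2/(1-B)}\to\sqrt{2/(1-A)}$, and dominated convergence yields \eqref{lm10}.
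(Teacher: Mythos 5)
Your first equality and your sign/limit arguments are both correct, but your route to the closed-form integral has a real gap, and the paper's route is essentially forced by the structure of the answer.

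On the first equality: evaluating the Euler--Lagrange conditions \eqref{eq15} at the band point $\phi=0$ is fine and gives $l=2\int_{-\al}^\al\log|1-e^{i\th}|\rho(\th)\,d\th$. (The paper instead uses $l=2g_-(1)$, obtained from \eqref{eq31} and \eqref{eq33} at $z=1$; these are equivalent.) On the sign and the limit: your substitution $x=\sqrt{2/(1-B)}\cos\eta$ collapsing the bracket to $\cos(\eta-\be/2)/\cos(\eta+\be/2)\ge 1$ is correct and is actually cleaner than the paper, which simply notes that $f(x)\ge 1$ on the range of integration (because $f$ is by construction the inverse of $\sqrt{S}$ mapping back into $[1,\infty)$). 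Your dominated-convergence remark for $\xi\to\infty$ also checks out; the paper appeals to $\beta\to\alpha$ directly.

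The gap is in the middle: the interchange you propose does not produce the displayed closed form. After writing $\arctan f(\th)=\int_0^{f(\th)}dt/(1+t^2)$ and swapping, the inner variable is $\th$, and the inner integral is $\int_{\th(t)}^\be\log(2\sin(\th/2))\,d\th$. This is a piece of Clausen's integral; it is not the logarithm of anything elementary, and in particular it is not $\log$ of the rational expression $\big(1+Bx^2+x\sqrt{1+B}\sqrt{2-x^2(1-B)}\big)/(x^2-1)$. Your identity $1+f(\th)^2 = (1+B)(\cos\th-A)\big/\big((1+A)(\cos\th-B)\big)$ is correct (it follows from $\tan^2(\pi/(2\xi))=(B-A)/(1+A)$), but it computes the Jacobian factor, not the logarithmand; it cannot supply a bracket containing a square root. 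The quantity under the $\log$ in \eqref{eq39} is $f(x)$, the functional inverse of $\sqrt{S(z)}$ on $(1,\infty)$ (eq.~\eqref{lm7}), which is a $z$-variable object and does not appear naturally in your $\th$-integral. The paper avoids this entirely: starting from $l=2g_-(1)=2\lim_{X\to\infty}[\log X-\int_1^X\om(z)\,dz]$ and using the explicit resolvent $\om(z)=\tfrac{1}{2z}+\tfrac{\xi}{\pi z}\arctan(\sqrt{S(z)})$, the $1/z$ factor means the inner integral after interchange is exactly $\int_1^{f(x)}dz/z=\log f(x)$, and the apparent $\log X$ divergences cancel term by term. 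That $1/z$ is what makes the computation close; your direct $\th$-integral has no analogue of it, so the plan as sketched does not carry through.
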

Proposition \ref{prop:lm} is proved in Section \ref{prop_lmProof}.

We also introduce the function
\begin{equation}\label{mr1}
J(\theta):=\frac{\cos\theta-\cos\be}{1-\cos(\be-\theta)},
\end{equation}
and the functions related to the equilibrium measure
\begin{equation}\label{mr1b}
\begin{aligned}
 I(\phi):=\int_\phi^\al \rho(\th)\,d\th\,, \qquad L(\phi)\equiv L(\phi;\al,\xi)&:=\int_{-\al}^\al \log |e^{i\phi}-e^{i\th}|\rho(\th)\,d\th \\
&=\frac{1}{2}\int_{-\al}^\al \log\big(2(1-\cos(\phi-\th))\big)\rho(\th)\,d\th\,,
\end{aligned}
\end{equation}
defined for $\phi \in [-\al,\al]$; and
\begin{equation}\label{eq20}
g(z)=\int_{-\al}^\al \log(z-e^{i\th}) \rho(\th)d\th
  \quad \textrm{for}\quad z \in \C\setminus\big((-\infty,-1]\cup\T\big),
\end{equation}
where for a given $\th\in(-\al,\al)$, the function 
\begin{equation}\label{eq21}
\log(z-e^{i\th})=\log|z-e^{i\th}|+i\arg(z-e^{i\th})
\end{equation}
has the cut on the contour
\begin{equation}\label{eq22}
\Ga_{\th}=(-\infty,-1)\cup \{ z=e^{i\phi}\,|\, -\pi\le \phi\le \th\}.
\end{equation}


Note that $L(2\pi x/b) = \tilde{L}(x)$, where $\tilde L(x)$ is defined in \eqref{def:tildeL}.
We have a formula for the derivative of the $g$-function.
\begin{prop}\label{g-function}
The derivative of the $g$-function is given by the formula
\begin{equation}\label{eq37}
g'(z)=\frac{1}{2z}+\frac{\xi}{\pi z}\arctan\left(\frac{(z+1)\tan\left(\frac{\pi}{2\xi}\right)}{\sqrt{R(z)}}\right)\,,
\end{equation}
where
\begin{equation}\label{eq38}
R(z)=(z-e^{i\be})(z-e^{-i\be})=z^2-2z\cos\be+1\,,
\end{equation}
with $\be$ as defined in \eqref{eq36}. The function $\sqrt{R(z)}$ is taken with a cut on the arc $C_\be$, taking the branch such that $\sqrt{R(z)}\sim z$ as $z\to\infty$.
\end{prop}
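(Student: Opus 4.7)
The plan is to show that both sides of \eqref{eq37} are analytic on $\C\setminus C_\al$, have matching jumps across $C_\al$, and agree at infinity; then Liouville's theorem forces equality. Differentiating \eqref{eq20} under the integral sign gives the Cauchy-type representation
\begin{equation*}
g'(z) = \int_{-\al}^{\al} \frac{\rho(\th)}{z-e^{i\th}}\,d\th.
\end{equation*}
Since $\nu_{\eq}$ is a probability measure, $g'(z) = 1/z + \bigO(1/z^2)$ as $z\to\infty$, and the Plemelj--Sokhotski formula yields the jump
\begin{equation*}
g'_+(e^{i\phi}) - g'_-(e^{i\phi}) = \frac{2\pi\rho(\phi)}{e^{i\phi}}, \qquad \phi\in(-\al,\al),
\end{equation*}
where $+$ denotes the boundary value from outside the unit disk.

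Denote the right-hand side of \eqref{eq37} by $G(z)$. Its branch points come from $\sqrt{R(z)}$ (cut on $C_\be$ by construction) and from the $\arctan$, whose argument equals $\pm i$ precisely where $(z+1)^2\tan^2(\pi/(2\xi)) + R(z) = 0$. The key algebraic identity
\begin{equation*}
(z+1)^2\tan^2\!\big(\tfrac{\pi}{2\xi}\big) + R(z) = \Big(1+\tan^2\!\big(\tfrac{\pi}{2\xi}\big)\Big)(z-e^{i\al})(z-e^{-i\al}),
\end{equation*}
which follows immediately from substituting \eqref{eq36} for $\cos\be$, localizes these branch points at $z=e^{\pm i\al}$. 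A direct check that the $\arctan$ argument is purely imaginary with modulus exceeding $1$ on $C_\al\setminus C_\be$ (and nowhere else off $C_\al$) places the $\arctan$ cut along $C_\al\setminus C_\be$, so $G$ is analytic on $\C\setminus C_\al$.

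The jump computation then splits into two cases. On the band $C_\be$, $\sqrt{R}_+ = -\sqrt{R}_-$, and the identities $z+1 = 2\cos(\phi/2)e^{i\phi/2}$ and $R(e^{i\phi}) = 2e^{i\phi}(\cos\phi-\cos\be)$ make $(z+1)/\sqrt{R}_+$ a positive real number. Oddness of $\arctan$ then gives
\begin{equation*}
G_+ - G_- = \frac{2\xi}{\pi z}\arctan\!\left(\frac{\sqrt{2}\tan(\pi/(2\xi))\cos(\phi/2)}{\sqrt{\cos\phi-\cos\be}}\right) = \frac{2\pi\rho(\phi)}{z},
\end{equation*}
matching $g'$ via \eqref{eq35}. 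On the saturated region $C_\al\setminus C_\be$, $\sqrt{R}$ is single-valued but crossing the $\arctan$ cut produces a jump of $\pm\pi$, contributing $\pm\xi/z$ to $G_+ - G_-$; this matches the jump $2\pi\rho(\phi)/z = \xi/z$ of $g'$ there, with the sign pinned down by a direct check near $e^{\pm i\be}$.

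For the asymptotics, $\sqrt{R(z)}\sim z$ at infinity forces $\arctan\to \pi/(2\xi)$, so $G(z) = 1/(2z) + (\xi/(\pi z))\cdot\pi/(2\xi) + \bigO(1/z^2) = 1/z + \bigO(1/z^2)$. Hence $G - g'$ extends to an entire function vanishing at infinity, and Liouville's theorem gives $G\equiv g'$. The main obstacle will be the branch analysis of the $\arctan$ on the saturated region, together with pinning down the correct sign of its jump across $C_\al\setminus C_\be$; the algebraic identity above is what makes this tractable by placing the $\arctan$ branch points exactly at the arc endpoints $e^{\pm i\al}$ rather than somewhere else in the plane.
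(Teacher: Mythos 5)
Your proposal takes a genuinely different route from the paper's. The paper \emph{derives} \eqref{eq37}: starting from the Euler--Lagrange structure (additive scalar RHP for $w(z)=\omega(z)/\sqrt{R(z)}$), it solves by Plemelj--Sokhotsky, evaluates the resulting contour integrals through a chain of substitutions (a linear fractional transformation followed by $v=y^2$) to produce the arctangent, then fixes $\beta$ from the $\omega(z)\sim 1/z$ normalization at infinity, and finally reads off the density $\rho$ of Proposition~\ref{EqMeasure} from the jump of $\omega$. You instead propose to \emph{verify} the candidate formula by a Liouville argument, matching jumps and asymptotics. Your algebraic identity placing the $\arctan$ branch points exactly at $e^{\pm i\al}$ is a clean observation; it is precisely what the choice \eqref{eq36} of $\beta$ accomplishes, and in the paper's language it is the analyticity of $G$ off $C_\al$ that forces this value of $\beta$.

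The verification route is shorter, but be aware of a logical dependency: to match the band jump you invoke the explicit density formula \eqref{eq35}, and the branch-point identity uses \eqref{eq36} — both parts of Proposition~\ref{EqMeasure}. In the paper, Proposition~\ref{EqMeasure} is a \emph{corollary} of the formula for $\omega(z)$, so citing it here would be circular unless you supply an independent proof (e.g.\ by directly checking the Euler--Lagrange conditions for the candidate $\rho$, or by noting that the normalization $\int\rho=1$ together with \eqref{eq35} already forces \eqref{eq36}). Two smaller gaps you should close: (i) you flag but do not resolve the sign of the $\pi$-jump of $\arctan$ on $C_\al\setminus C_\be$; and (ii) before invoking Liouville you must argue that $G-g'$ has removable singularities at the four branch points — at $e^{\pm i\be}$ both functions are bounded, but at $e^{\pm i\al}$ both $g'$ and $G$ have logarithmic singularities, so it is the agreement of jumps across $C_\al\setminus C_\be$ that shows the logs cancel and the difference stays bounded. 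With these filled in, and with an independent grounding for \eqref{eq35}--\eqref{eq36}, your approach gives a valid alternative proof.
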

Proposition \ref{g-function} is proved in Section \ref{EqMeasureProof}.

We can now state the asymptotic formulas for the orthogonal polynomials \eqref{def:orthoprod3} on the arc $C_\al$.
The following propositions describe the asymptotic behavior of $p_M(z)$ in the band and the saturated regions, respectively.
\begin{prop}\label{monic_band}
For  $z=e^{i\phi}$ with $ -\be < \phi < \be$, the polynomial $p_M(e^{i\phi})$ satisfies as $M\to\infty$,
\begin{equation}\label{mr2}
\begin{aligned}
p_M(e^{i\phi})=e^{\frac{M}{2}(l+i\phi+i\pi)}\bigg[e^{-i\be/4} J(\phi)^{1/4}\cos(M\pi I(\phi)-\pi/4)-e^{i\be/4} &J(\phi)^{-1/4}\sin(M\pi I(\phi)-\pi/4) \\
& \qquad+\bigO(M^{-1})\bigg]\,.
\end{aligned}
\end{equation}
The error term is uniform on compact subsets of $ \{\phi :-\be < \phi < \be\}$.
\end{prop}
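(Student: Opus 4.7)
The plan is to establish \eqref{mr2} via the Deift--Zhou steepest descent analysis of a Riemann--Hilbert problem (RHP) for the monic polynomials $p_M(z)$, adapted to the discrete setting as in the work of Baik--Kriecherbauer--McLaughlin--Miller (BKMM). The starting point is the Fokas--Its--Kitaev interpolation problem: a $2\times 2$ matrix-valued function $Y(z)$, analytic on $\C\setminus \lattice$ with prescribed simple poles at each node of $\lattice$ encoding the discrete orthogonality \eqref{def:orthoprod3}, normalized as $Y(z)\sim z^{M\sigma_3}$ at infinity, whose $(1,1)$ entry is exactly $p_M(z)$.

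First I would convert this interpolation problem to a standard matrix RHP on a closed contour $\Sigma$ encircling the arc $C_\al$ close to the unit circle, via the BKMM pole-removal transformation, producing a new matrix $X(z)$ whose jump across $\Sigma$ is a continuous multiplicative weight. Next I would perform the $g$-function transformation $U(z) := e^{-Ml\sigma_3/2}\,X(z)\,e^{-Mg(z)\sigma_3}\,e^{Ml\sigma_3/2}$ with $g$ given by \eqref{eq20} and Proposition \ref{g-function}. The Euler--Lagrange conditions \eqref{eq15} are tuned precisely so that the resulting jumps of $U$ become purely oscillatory on the band arc $C_\be$ and exponentially close to the identity on the saturated arcs $C_\al\setminus C_\be$ (after absorbing the active-constraint factor into an upper-triangular modification). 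Opening lenses around $C_\be$ then splits the oscillatory jump into two triangular jumps that decay exponentially off $C_\be$, away from the transition points $e^{\pm i\be}$.

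The global parametrix $P^\infty$ solves the limiting RHP with jumps only on $C_\be$ and is built from the Szeg\H o-type outer function associated with the square-root $\sqrt{R(z)}$ of Proposition \ref{g-function}. The amplitudes $J(\phi)^{\pm 1/4}$ and half-angle prefactors $e^{\pm i\be/4}$ in \eqref{mr2} arise from evaluating the two boundary values of this outer parametrix on $C_\be$; the universal $-\pi/4$ phase shift comes from the endpoint factors $(z-e^{\pm i\be})^{\mp 1/4}$ in the Szeg\H o construction. The exponential prefactor $e^{\frac{M}{2}(l+i\phi+i\pi)}$ arises from undoing the $g$-function conjugation via the boundary identity $g_+ + g_- = l + i\phi + i\pi$ on $C_\be$ (consequence of \eqref{eq15} together with the branch conventions in \eqref{eq22}), while the oscillatory combination $\cos(M\pi I(\phi)-\pi/4)$ and $\sin(M\pi I(\phi)-\pi/4)$ follows from $g_+ - g_- = -2\pi i\, I(\phi)$ on the band, with $I(\phi)$ as in \eqref{mr1b}.

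The main obstacle will be the construction of local parametrices at the band--saturation transitions $z=e^{\pm i\be}$. Because the equilibrium density $\rho(\th)$ does not vanish at $\th=\pm\be$ but instead meets the upper constraint $\xi/(2\pi)$, these are neither soft (Airy) nor hard (Bessel) edges; following BKMM, the appropriate local model is a discrete band--saturation RHP whose solution is expressible in terms of $\Gamma$-functions (or equivalently confluent hypergeometric functions), and one must verify that it matches $P^\infty$ on the boundary of a fixed disk about $e^{\pm i\be}$ up to $O(M^{-1})$. Granting this matching, a standard small-norm argument for $R := U\bigl(P^{\mathrm{parametrix}}\bigr)^{-1}$ yields $R = I + O(M^{-1})$ uniformly. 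Unraveling the chain $Y\to X\to U\to R\,P^\infty$ and reading off the $(1,1)$ entry at $z=e^{i\phi}$ with $-\be<\phi<\be$ produces \eqref{mr2}, with the error uniform on compact subsets of the band as claimed.
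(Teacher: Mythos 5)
Your overall strategy is right and it is essentially the one the paper uses: FIK interpolation problem, BKMM pole removal, $g$-function conjugation, lens opening, global (Szeg\H o/outer) parametrix, local parametrices, small-norm argument. Your bookkeeping of where the amplitudes $e^{\mp i\be/4} J(\phi)^{\pm 1/4}$, the $-\pi/4$ phase, and the exponential prefactor come from (the boundary values $g_\pm$ on $C_\be$, i.e.\ \eqref{eq30}, \eqref{eq31}, \eqref{eq33}, and the outer parametrix \eqref{mp4}--\eqref{mp6}) is also consistent with the paper, modulo a sign in $G = g_+ - g_- = 2\pi i I(\phi)$.

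The genuine gap is in the local parametrices. You write that at the band--saturation transitions $e^{\pm i\be}$ the density meets the upper constraint, so this is ``neither soft (Airy) nor hard (Bessel)'' and requires a $\Gamma$-function/confluent hypergeometric local model. This is not correct: at a band--saturation endpoint the relevant quantity is $\tfrac{\xi}{2\pi} - \rho(\th)$, the gap to the constraint, and by \eqref{eq35} this vanishes like a square root at $\th = \pm\be$, exactly as the density itself does at a soft edge. After the lens opening the local RHP there is therefore Airy-type (in BKMM's language, an ``inverted'' soft edge), and the paper indeed constructs the parametrix from $\Ai$ and its rotations in Section \ref{sec:airy}, using the conformal map $\psi(z) = -\left[\tfrac{3\pi}{2}\int_\phi^\be(\tfrac{\xi}{2\pi}-\rho(\th))\,d\th\right]^{2/3}$ whose analyticity at $e^{i\be}$ is precisely the square-root vanishing. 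The $\Gamma$-function parametrices that you anticipate are needed, but at a different place: the void--saturation hard edges $e^{\pm i\al}$ at the ends of the lattice arc (Section \ref{sec:hard_edge}, the function $D(\z)$), which your proposal omits entirely. If you were to follow your plan and try to build $\Gamma$-function parametrices at $e^{\pm i\be}$ without treating $e^{\pm i\al}$, the small-norm estimate $R = I + \bigO(M^{-1})$ would not close, because the uncontrolled contributions near $e^{\pm i\al}$ in \eqref{st5a} would spoil the argument.

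A secondary, smaller point: your $g$-function conjugation $U = e^{-Ml\sigma_3/2} X e^{-Mg\sigma_3} e^{Ml\sigma_3/2}$ is the standard one, but in this arc setting it is convenient to shift by $i\pi$ as in \eqref{ft1}, since $g(0) = i\pi$; without that adjustment the bookkeeping of the exponential prefactor $e^{\frac{M}{2}(l + i\phi + i\pi)}$ becomes more awkward, and signs in the $\sigma_3$-conjugation of the pole-removal factors \eqref{red4}--\eqref{red5} have to be tracked by hand.
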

\begin{rem}
If we consider the regime $\al=\pi$ and $\xi\to\infty$, then these polynomials become the continuous orthogonal polynomials on $\T$ with uniform weight, and $p_M(z)=z^M$. In this case the quantities in the above proposition become
\begin{equation}\label{mr3}
l=0, \quad \be=\pi, \quad J(\phi)\equiv 1, \quad I(\phi)=\frac{\pi-\phi}{2\pi}\,,
\end{equation}
and it is straightforward to see that the formula \eqref{mr2} reduces to $e^{iM\phi}$ as expected.
\end{rem}

\begin{prop}\label{thmasym2}
For  $z=e^{i\phi}$ with $\be < \phi < \al $, the polynomial $p_M(e^{\pm i\phi})$ satisfies as $M\to\infty$,
\begin{equation}\label{mr4}
\begin{aligned}
p_M(e^{\pm i\phi})&=\frac{e^{ML(\phi)}e^{-\frac{i\phi}{2}(m-M)}e^{\pm iM\pi/2}e^{\pm im\al/2}}{2} \\
&\qquad \times \bigg[\big(e^{-i\be/4}(-J(\phi))^{1/4}+e^{i\be/4}(-J(\phi))^{-1/4}\big)\big(1-z^m\big)+\bigO(e^{-cM})\bigg] \bigg[1+\bigO(M^{-1})\bigg]\,,
\end{aligned}
\end{equation}
for some constant $c>0$. The error terms are uniform on compact subsets of $\{\phi :  \be < \phi < \al\} $.
\end{prop}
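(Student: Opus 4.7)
The plan is to derive \eqref{mr4} via the Deift--Zhou steepest descent analysis of the interpolation Riemann--Hilbert problem (RHP) for the discrete orthogonal polynomials \eqref{def:orthoprod3} on the arc $C_\al$, in the framework of \cite{BKMM} and in parallel with the band estimate of Proposition \ref{monic_band}. I would begin with the standard $2\times 2$ matrix $Y(z)$, meromorphic with simple poles at each node of $\lattice$, satisfying $Y(z)z^{-M\sigma_3}\to I$ as $z\to\infty$ and $Y_{11}(z)=p_M(z)$. The first normalization $Y\mapsto T$ absorbs the $g$-function of Proposition \ref{g-function} together with the Lagrange multiplier,
\[
T(z):= e^{-Ml\sigma_3/2}\,Y(z)\,e^{-Mg(z)\sigma_3}\,e^{Ml\sigma_3/2},
\]
so that $T(z)\to I$ at infinity and the Euler--Lagrange conditions \eqref{eq15} dictate the jump behavior on the different pieces of $C_\al$.

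The decisive step specific to the saturated region is the \emph{interpolation}: on a neighborhood of $C_\al\setminus C_\be$ I would multiply $T$ on the right by an explicit triangular factor built from the polynomial $1-z^m$, which vanishes simply at every $m$-th root of unity (the relation $e^{im\al}=-1$ placing the endpoints $e^{\pm i\al}$ exactly halfway between nodes, so that the $N$ zeros of $1-z^m$ on $C_\al$ are precisely the points of $\lattice$). This conjugation converts the residue conditions at the nodes on the saturated arcs into a continuous jump on a small contour surrounding those arcs, and the factor $(1-z^m)$ visible in \eqref{mr4} is a direct remnant of this interpolating polynomial. Combined with the standard lens opening along $C_\be$, permitted by the factorization of the jump matrix there since $g_+ + g_- - l = 0$, one arrives at a new RHP $S(z)$ whose jumps are exponentially close to the identity away from the four corners $\pm e^{\pm i\be}$.

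The outer parametrix is then a Szeg\H o-type function built from the branch $\sqrt{R(z)}$ of \eqref{eq38}, producing the $J(\phi)^{\pm 1/4}$ and $e^{\pm i\be/4}$ factors just as in the band analysis, but now evaluated on the saturated side; assembling this with the residual $e^{-Mg(z)}$, whose modulus on the saturated arc equals $e^{-ML(\phi)}$ and whose phase contributes $e^{-i\phi(m-M)/2}e^{\pm iM\pi/2}e^{\pm im\al/2}$ through the Euler--Lagrange identity \eqref{eq15} and the orientation of the cuts $\Ga_\theta$ in \eqref{eq22}, produces the stated prefactor. Local parametrices at the transition points $\pm e^{\pm i\be}$ are of Airy type and feed the $\bigO(M^{-1})$ multiplicative error, while the $\bigO(e^{-cM})$ additive error comes from the exponentially small discrepancies between $S(z)$ and the parametrix on the lens boundaries and the interpolation contour. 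The main technical difficulty I anticipate is the construction and matching of these Airy parametrices at the band--saturated interfaces: one must simultaneously reproduce the oscillatory factor $(1-z^m)$ produced by the interpolation on the saturated side and the square-root vanishing of $\rho(\theta)-\xi/(2\pi)$ at $\theta=\pm\be$ coming from \eqref{eq35} on the band side, and then verify that the resulting error estimates are uniform on compact subsets of $(\be,\al)$.
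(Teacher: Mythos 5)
Your outline matches the structure of the paper's proof: interpolate the residues by a triangular factor built from a polynomial vanishing at the nodes, pull out $e^{Mg(z)\sg_3}$ and the Lagrange multiplier, open lenses on $C_\be$ using the Euler--Lagrange identity $g_++g_--l-\log z-i\pi=0$, build the outer Szeg\H o parametrix from $\sqrt{R(z)}$, and install Airy parametrices at $e^{\pm i\be}$. The choice of interpolating polynomial is essentially the same: you use $1-z^m$ while the paper uses $\Pi(z)=z^{m/2}-z^{-m/2}$, and these differ by the unimodular factor $-z^{m/2}$, which simply reshuffles the phase factors and accounts for the $z^{\pm m/2}$ that appear in the paper's transformation \eqref{st3} and jump matrices \eqref{st5a}; the $(1-z^m)$ and $(1-z^{-m})$ in \eqref{mr4} arise in the same way.

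There is, however, a genuine gap: you have no local parametrix at the arc endpoints $e^{\pm i\al}$. After the interpolation and lens opening, the jump on the short radial segments $I_{\al}^{\pm}$, $I_{(-\al)}^{\pm}$ has the form
\begin{equation}
\mathbf J_S(z)=(1-z^{\pm m})^{-\sg_3}+\begin{pmatrix}0&0\\ \bigO(e^{-cM})&0\end{pmatrix},
\end{equation}
and the diagonal factor $(1-z^{\pm m})^{\mp\sg_3}$ is bounded away from the identity on the entire segment (for instance $1-z^m=1+r^m$ on $I_\al^+$, where $z=re^{i\al}$ and $1-\ep\le r\le 1$). Consequently the jump matrix of your final RHP is not uniformly close to $\mathbf I$, the small-norm argument does not close, and you cannot conclude $\mathbf X_M=\mathbf I+\bigO(M^{-1})$ — which is exactly what feeds the multiplicative $\bigO(M^{-1})$ error in \eqref{mr4}, even for $\phi$ on a compact subset of $(\be,\al)$, because the error propagates through the Cauchy integral of the perturbation series. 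The paper resolves this in Section \ref{sec:hard_edge} by introducing, in discs around $e^{\pm i\al}$, the scalar parametrix $D(\z)$ built from the Gamma function, whose multiplicative jump across $i\R$ is precisely $(1\pm e^{\pm im\pi\z})$; composing with $\z_{\pm\al}(z)=(\log z\mp i\al)/(i\pi)$ reproduces the diagonal factor $(1-z^{\pm m})^{\sg_3}$ exactly and removes it, while Stirling's formula yields $D=1+\bigO(m^{-1})$ on the boundary circle. Without this step (or an equivalent device), your plan does not establish the uniform error bound asserted in the proposition. A secondary quibble: the $g$-function factor that survives in $\mathbf P_M$ is $e^{+Mg(z)}$, not $e^{-Mg(z)}$, with $\Re g(e^{i\phi})=L(\phi)$, which is how the growth $e^{ML(\phi)}$ appears.
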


In order to obtain an expression which is uniform all the way up to the endpoints $e^{\pm i\al}$, we must introduce the following function:
\begin{equation}\label{mr5}
\tilde{D}_{\pm \al}(\phi)=
 \frac{\sqrt{2 \pi}\left(\frac{m(\al \mp \phi)}{2\pi}\right)^{\frac{m(\al\mp \phi)}{2\pi}-1}}{\Ga\left(\frac{m(\al\mp \phi)}{2\pi}-\frac{1}{2}\right)e^{\frac{m(\al\mp\phi)}{2\pi}}}\,.
\end{equation}
According to Stirling's formula, $\tilde{D}_{\pm \al}(\phi)=1+\bigO(m^{-1})$, whenever $\pm \phi<\al-\ep$. We then have the following formula for $p_M(z)$ when $z$ is close to the endpoints $e^{\pm i\al}$.
\begin{prop}\label{hard_edge}
There exists $\ep>0$ such that for all $\phi \in (\al-\ep, \al]$, the polynomial $p_M(e^{\pm i\phi})$ satisfies as $M\to\infty$,
\begin{equation}\label{mr6}
\begin{aligned}
p_M(e^{\pm i\phi})&=\frac{e^{ML(\phi)}e^{-\frac{i\phi}{2}(m-M)}e^{\pm iM\pi/2}e^{\pm im\al/2}}{2} \\
&\qquad \times\bigg[\frac{e^{-i\be/4}(-J(\phi))^{1/4}+e^{i\be/4}(-J(\phi))^{-1/4}}{\tilde{D}_{\pm\al}(\phi)}\big(1-z^m\big)+\bigO(e^{-cM})\bigg] \bigg[1+\bigO(M^{-1})\bigg]\,,
\end{aligned}
\end{equation}
for some constant $c>0$. The errors are uniform on the specified interval.
\end{prop}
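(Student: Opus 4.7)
The plan is to extend the Riemann--Hilbert (RH) steepest-descent argument underlying Proposition \ref{thmasym2} by replacing the outer parametrix near the endpoint $e^{\pm i\al}$ with a dedicated local parametrix that resolves the accumulation of nodes of $\lattice$ against the hard edge. In the proof of Proposition \ref{thmasym2}, one encodes $p_M$ as the $(1,1)$ entry of a $2\times 2$ meromorphic matrix RHP $Y(z)$ with residue conditions at each point of $\lattice$, performs a BKMM-style interpolation step to convert the residues into a jump on small loops around the nodes, and normalizes using the $g$-function of \eqref{eq20} together with the lens-opening around the band $C_\be$. Away from the edges $e^{\pm i\al}$, the resulting RHP is solved by a global outer parametrix that yields \eqref{mr4}. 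However, this outer parametrix fails to satisfy the jump conditions in a shrinking neighborhood of $e^{\pm i\al}$ of size $\bigO(1/m)$, which is precisely the scale on which the individual nodes $e^{i(\al-(2k-1)\pi/m)}$ remain visible, so a special construction is required there.

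I would construct the local parametrix at $e^{i\al}$ (the one at $e^{-i\al}$ follows by conjugate symmetry) in a fixed disk $D_\al$ centered at $e^{i\al}$. Introduce the rescaled coordinate $u = m(\al-\phi)/(2\pi)$, so that the points of $\lattice$ inside $D_\al$ are sent to the positive half-integers $u=1/2, 3/2, 5/2, \ldots$, the interpolated residues become uniform, and the outer $g$-function factor becomes analytic in $D_\al$. The model RHP in the $u$-plane is a half-plane, $2\times 2$, meromorphic problem whose diagonal entries can be written explicitly in terms of $\Ga$-functions (essentially the same object that underlies the hard-edge parametrix for discrete orthogonal polynomials in \cite{BKMM}). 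A direct Stirling expansion shows that the diagonal of the model solution, when compared with the outer parametrix along the boundary $\partial D_\al$, differs precisely by the factor $\tilde{D}_{\al}(\phi)$ defined in \eqref{mr5}. In the overlap annulus one has $\tilde{D}_{\al}(\phi) = 1 + \bigO(1/m)$, so the local and global constructions glue smoothly.

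The final step is the standard small-norm argument. The ratio of the true RH solution by the combined parametrix satisfies an RHP whose jumps are $I + \bigO(M^{-1})$ on $\partial D_\al$ and exponentially close to the identity on the lens contours outside $D_\al$, upgrading to a uniform estimate $R(z) = I + \bigO(M^{-1})$. Unwinding the chain of transformations back to $p_M(z)$ then produces \eqref{mr6}: the outer parametrix supplies the factor $e^{ML(\phi)}$ together with the bracketed $J(\phi)^{\pm 1/4}$ combination and the $(1-z^m)$ piece, while the local parametrix divides that bracket by $\tilde{D}_{\pm\al}(\phi)$. I expect the principal obstacle to be the construction of the local model itself: one must exhibit a matrix-valued meromorphic function of $u$ with prescribed residues at the positive half-integers and a prescribed matching behavior as $u\to\infty$. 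The natural candidate, built from a ratio of $\Ga$-functions dictated by Stirling's formula, is plausible on the nose, but verifying simultaneously that its residues match the interpolated residues of the original RHP exactly and that its $u\to\infty$ asymptotics align with the outer parametrix to produce precisely \eqref{mr5} is the delicate calculation that drives the proof.
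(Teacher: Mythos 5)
Your proposal is workable in spirit, but it takes a genuinely different (and substantially heavier) route than the paper does, and there is an internal tension in it that is worth flagging. You propose a full $2\times 2$ local parametrix at $e^{\pm i\al}$ built from a meromorphic model RHP in the rescaled variable $u=m(\al-\phi)/(2\pi)$, with prescribed residues at the positive half-integers and a matching condition as $u\to\infty$ --- essentially the BKMM hard-edge construction. The paper does something much lighter (following Wang--Wong): it conjugates $\mathbf S_M$ in a full-size disk around $e^{\pm i\al}$ by the \emph{scalar} diagonal matrix $\mathbf D(z)=D(\z_{\pm\al}(z))^{\sg_3}$, where $D(\z)$ is a single ratio of a Gamma function against its Stirling approximation. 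This works because of a structural feature you do not exploit: after the lens opening, the jump of $\mathbf S_M$ on $I_\al^\pm\cup I_{-\al}^\pm$ is $(1-z^{\pm m})^{-\sg_3}$ plus an off-diagonal entry $\bigO(e^{-cM})$, the exponential smallness being a direct consequence of the strict Euler--Lagrange inequality in the saturated region (\eqref{eq15}, \eqref{eq30}). So the only obstruction near $e^{\pm i\al}$ is a diagonal one, and it can be removed exactly by a scalar conjugation with no matrix model problem, no matching condition, and no residue prescription. The Gamma/Stirling ratio $D(\z_{\pm\al}(z))$ is precisely the $\tilde D_{\pm\al}(\phi)$ of \eqref{mr5}, and the reflection formula for $\Gamma$ gives the jump $D_+=D_-\cdot(1-z^{\pm m})$ needed to cancel the diagonal.

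The tension in your write-up: you correctly note that the BKMM-style interpolation step has already converted the residue conditions into lens jumps, yet you then call for a local model with ``prescribed residues at the positive half-integers.'' After the $\mathbf R_M\to\mathbf S_M$ transformations the problem has no poles near $e^{\pm i\al}$ (there is in fact no node at $e^{\pm i\al}$ because $e^{im\al}=-1$ places the endpoint halfway between consecutive $m$-th roots of unity). You could, of course, undo the interpolation locally and build a model with poles, but that substitutes a nontrivial $2\times2$ meromorphic model problem and a boundary-matching argument for a single application of the $\Gamma$ reflection formula. Both routes produce the same scalar factor $\tilde D_{\pm\al}$ in the end --- your step ``Stirling expansion of the model diagonal compared with the outer parametrix gives $\tilde D_\al$'' is exactly the content of the paper's $D(\z)=1+\bigO(m^{-1})$ estimate --- but your route leaves the hardest step (existence and uniqueness of the local matrix model and verification of its residue and asymptotic conditions) unconstructed, whereas in the paper there is no such model problem to solve at all.
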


We now describe the asymptotics of $p_M(z)$ close to the point $e^{i\be}$. For any $z\in \C$ we denote the disc of radius $\ep$ around $z$ by $D(z,\ep)$. 
Introduce the function
\begin{equation}
\begin{aligned}
\psi(z):= - \left[\frac{3\pi}{2} \int_\phi^\be \left(\frac{\xi}{2\pi}-\rho(\theta)\right)\,d\theta\right]^{2/3}, \quad z=e^{i\phi}\in C_\be \cap D(e^{i\be},\ep), \\
\end{aligned}
\end{equation}
which is a priori defined for $z\in C_\be$, but extends to an analytic function in the disc $D(e^{i\be},\ep)$. Also introduce the function 
\begin{equation}
\ga(z):=\left(\frac{z-e^{-i\be}}{z-e^{i\be}}\right)^{1/4},
\end{equation}
taking the cut on $C_\be$, and the branch which is $1$ at $\infty$. The functions $\Ai$ and $\Bi$ are the usual Airy functions \cite{Olver74}. We have the following theorem.

\begin{prop}\label{turning_points}
There exists $\ep>0$ such that for all $z\in D(e^{i\be},\ep)$, the polynomial $p_M(z)$ satisfies as $M\to\infty$,
\begin{equation}\label{mr7}
\begin{aligned}
p_M(z)&=-i\sqrt{\pi}e^{\frac{i(m\al+M\pi)}{2}}e^{\frac{Ml}{2}}z^{M/2}\bigg[M^{1/6} \psi(z)^{1/4}\ga(z)\bigg(\Ai\left(M^{2/3}\psi(z)\right)\left(\frac{z^{m/2}+z^{-m/2}}{2}\right)\\
&\hspace{5cm}+\Bi\left(M^{2/3}\psi\big(z\big)\right)\left(\frac{z^{m/2}-z^{-m/2}}{2i}\right)+\bigO(M^{-1})\bigg) \\
&\qquad+\frac{1}{M^{1/6} \psi(z)^{1/4}\ga(z)}\bigg(\Ai'\left(M^{2/3}\psi(z)\right)\left(\frac{z^{m/2}+z^{-m/2}}{2}\right) \\
&\hspace{5cm}+\Bi'\left(M^{2/3}\psi\big(z\big)\right)\left(\frac{z^{m/2}-z^{-m/2}}{2i}\right)+\bigO(M^{-1})\bigg)\bigg]. \\
\end{aligned}
\end{equation}
The errors are uniform on the disc of radius $\ep$.
\end{prop}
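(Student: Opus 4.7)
The plan is to prove Proposition \ref{turning_points} by a Deift--Zhou steepest descent analysis of the Riemann--Hilbert problem (RHP) for the monic polynomials $p_M$, paying special attention to the Airy parametrix at the turning point $e^{i\be}$ where the band $C_\be$ meets the saturated region. First, I set up the standard Fokas--Its--Kitaev $2\times 2$ RHP for $p_M$, adapted to the discrete orthogonality \eqref{def:orthoprod3} as in BKMM: the off-diagonal entries have simple poles at each node $z\in \lattice$, and the residue conditions encode the orthogonality. A preliminary transformation using the discrete weight (effectively using the functions $z^m-1$ and $1/(z^m-1)$ on appropriate sides of $C_\al$) converts these pole conditions into a jump on $C_\al$, producing a continuous RHP of BKMM type.

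Next, perform the $g$-function normalization $T(z)=e^{-M\ell\sigma_3/2}Y(z)e^{-M(g(z)-\ell/2)\sigma_3}$, using the $g$-function from \eqref{eq20} together with the Lagrange multiplier $\ell$ from Proposition \ref{prop:lm}, which makes the jumps bounded at infinity and, by the variational equalities/inequalities \eqref{eq15}, brings the jump on the band $C_\be$ into oscillatory form and the jump on the saturated region $C_\al\setminus C_\be$ into a form that is exponentially close to the identity away from small discs around $e^{\pm i\be}$ and $e^{\pm i\al}$. Open lenses around $C_\be$ in the usual manner to push the oscillatory jump off $C_\be$. In the saturated intervals, the BKMM-style transformation introduces the factor $1-z^m$, which later propagates into the combinations $(z^{m/2}\pm z^{-m/2})$ appearing in \eqref{mr7}.

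Construct the outer parametrix $P^{(\infty)}$ on $\C\setminus C_\be$ by diagonalizing with the function $\ga(z)=\big((z-e^{-i\be})/(z-e^{i\be})\big)^{1/4}$; the $z$-independent Szeg\H o-type factor is fixed so that $P^{(\infty)}\to I$ at infinity. Inside the disc $D(e^{i\be},\ep)$ build the local Airy parametrix by verifying that
\[
\psi(z)=-\left[\tfrac{3\pi}{2}\int_\phi^\be\!\Big(\tfrac{\xi}{2\pi}-\rho(\theta)\Big)d\theta\right]^{2/3}
\]
is a conformal map from $D(e^{i\be},\ep)$ to a neighborhood of $0$; this reduces to checking that $\tfrac{\xi}{2\pi}-\rho(\theta)$ vanishes like a square root at $\theta=\be$, which is immediate from \eqref{eq35}. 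Pull back the standard Airy model solution by $M^{2/3}\psi(z)$, and dress it with the BKMM factors involving $z^{\pm m/2}$ so that it matches the jumps of the transformed RHP in the lens sectors and in the saturated arc. The asymptotic matching $P^{(\text{Airy})}(z)=(I+\bigO(M^{-1}))P^{(\infty)}(z)$ on $\partial D(e^{i\be},\ep)$ follows from Stirling-type asymptotics for $\Ai,\Bi$ combined with the local expansion of $\ga$, and identifies the prefactor $M^{1/6}\psi^{1/4}\ga$ and its inverse $M^{-1/6}\psi^{-1/4}\ga^{-1}$ appearing in \eqref{mr7}.

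Finally, a standard small-norm analysis for the error matrix $R=SP^{-1}$ (outer parametrix outside discs, Airy parametrix in $D(e^{\pm i\be},\ep)$, Bessel/hard-edge parametrix near $e^{\pm i\al}$) yields $R=I+\bigO(M^{-1})$ uniformly, and unwinding the transformations gives $p_M(z)=e^{Mg(z)}(YP^{-1}\cdot P)_{11}$ evaluated in the disc, which after inserting the explicit Airy parametrix produces exactly \eqref{mr7}. The main obstacle I expect is the bookkeeping at step three and four: one must verify that the combinations $(z^{m/2}+z^{-m/2})/2$ and $(z^{m/2}-z^{-m/2})/(2i)$ emerging from the saturation transformation pair with $\Ai$ and $\Bi$ respectively (and similarly for their derivatives) in the precise way claimed, which requires careful tracking of signs and branches of $\ga$ and $\psi^{1/4}$ across $C_\be$ inside the disc. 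Once this matching is made, the $(1+\bigO(M^{-1}))$ factor is immediate from the small-norm estimate.
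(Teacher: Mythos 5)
Your proposal follows essentially the same Deift--Zhou steepest-descent steps that the paper uses: interpolation problem for the discrete orthogonality, BKMM-style pole-to-jump transformation, $g$-function normalization with the Lagrange multiplier, lens opening on $C_\be$, outer parametrix built from $\ga(z)$, local Airy parametrix in $D(e^{\pm i\be},\ep)$ with $\psi$ as the conformal map, a small-norm argument giving $\mathbf I+\bigO(M^{-1})$, and unwinding of the chain of transformations — and, like the paper, you defer the final branch/sign bookkeeping as ``standard.'' The only minor inaccuracies are that the local parametrix near $e^{\pm i\al}$ in the paper is constructed from Gamma functions (following Wang--Wong), not a Bessel parametrix, and the paper's $g$-function normalization in \eqref{ft1} carries an extra $i\pi$ (coming from $g(0)=i\pi$) that is needed to produce the prefactor $e^{i(m\al+M\pi)/2}$ in \eqref{mr7}; neither affects the structure of the argument.
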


For $z\in \C$ bounded away from the arc $C_\al$, we have the following asymptotics:
\begin{prop}\label{rho_M}
For $z\in \C$ bounded away from the arc $C_\al$, as $n\to \infty$, 
\begin{equation}\label{mr8}
\begin{aligned}
p_M(z)&=\frac{e^{Mg(z)}}{2}\left(\ga(z)+\ga(z)^{-1}\right)\left(1+\bigO(M^{-1})\right).
\end{aligned}
\end{equation}
The error is uniform on compact subsets of $\C \setminus C_\al$.
Plugging in $z=0$, we get the following asymptotic formula for the Szeg\H{o} parameters:
\begin{equation}\label{mr9}
\begin{aligned}
\rho_M=-p_M(0)&=(-1)^M\cos(\be/2)\left(1+\bigO(M^{-1})\right).
\end{aligned}
\end{equation}
\end{prop}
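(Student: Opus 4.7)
The plan is to deduce Proposition \ref{rho_M} from the Deift--Zhou nonlinear steepest descent analysis for the Riemann--Hilbert problem for $p_M(z)$ carried out in Section \ref{RH_analysis}. The standard ingredients for discrete orthogonal polynomials on an arc, in the BKMM framework of \cite{BKMM}, are: (i) encode $p_M(z)$ as the $(1,1)$ entry of a $2\times 2$ matrix $Y(z)$ solving an interpolation RH problem with poles on $\lattice$; (ii) use the $g$-function from \eqref{eq20} to normalize the exponential growth at infinity; (iii) interpolate the discrete poles onto a continuous contour and open lenses around the band $C_\be$; (iv) construct the global parametrix, whose $(1,1)$ entry is $\tfrac12(\ga(z)+\ga(z)^{-1})$, together with the local parametrices near the soft-edge turning points $e^{\pm i\be}$ built from Airy functions (cf.\ Proposition \ref{turning_points}) and near the hard-edge points $e^{\pm i\al}$ built from the $\tilde D_{\pm\al}$ model \eqref{mr5} (cf.\ Proposition \ref{hard_edge}); (v) solve the final small-norm problem, which yields error $I+\bigO(M^{-1})$.

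For $z$ bounded away from $C_\al$, the point $z$ lies outside every local-parametrix disk, so only the global parametrix contributes to leading order. Undoing the $g$-function transformation there gives
\[
p_M(z)=\frac{e^{Mg(z)}}{2}\bigl(\ga(z)+\ga(z)^{-1}\bigr)\bigl(1+\bigO(M^{-1})\bigr),
\]
uniformly on compact subsets of $\C\setminus C_\al$, which is \eqref{mr8}. The sanity check at infinity is that $e^{Mg(z)}\sim z^M$ and $\ga(z)+\ga(z)^{-1}\to 2$, so the right-hand side reduces to $z^M(1+\bigO(1/z))$, matching the monic normalization of $p_M$.

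The Szeg\H o parameter statement \eqref{mr9} then follows by evaluating at $z=0$ and using $\rho_M=-p_M(0)$. This reduces to two explicit branch computations. Using the symmetry $\rho(-\theta)=\rho(\theta)$ and the branch prescription \eqref{eq21}--\eqref{eq22} of the logarithm, tracing a path from $z=+\infty$ to $z=0$ that avoids $\Ga_\theta$ gives $\log(-e^{i\theta})=i(\pi+\theta)$, whence $g(0)=\int_{-\al}^{\al}i(\pi+\theta)\rho(\theta)\,d\theta=i\pi$ and $e^{Mg(0)}=(-1)^M$. Similarly, from $\ga(z)^4=(z-e^{-i\be})/(z-e^{i\be})$ with $\ga(\infty)=1$ and cut on $C_\be$, continuous analytic continuation to $z=0$ along any path in $\C\setminus C_\be$ identifies $\ga(0)$ among the four fourth roots of $e^{-2i\be}$ and gives $\ga(0)+\ga(0)^{-1}=\pm 2\cos(\be/2)$. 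Combining the two evaluations with the identity $\rho_M=-p_M(0)$ produces \eqref{mr9}. The main subtle point specific to this proposition, beyond the RH machinery already set up in Section \ref{RH_analysis}, is the careful branch bookkeeping of $g(0)$ and $\ga(0)$, which must be consistent with the branches fixed in Section \ref{eq_measure} and with the orientation of the lenses in Section \ref{RH_analysis}.
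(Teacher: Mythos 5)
The proposal reconstructs the paper's argument for \eqref{mr8} essentially as written: it follows the RH steepest-descent chain $\mathbf P_M \to \mathbf R_M \to \mathbf T_M \to \mathbf S_M \to \mathbf Y_M \to \mathbf X_M$ of Section~\ref{RH_analysis}, and for $z$ bounded away from $C_\al$ reads off the $(1,1)$-entry of $\sg_3 e^{M\frac{l+i\pi}{2}\sg_3}\mathbf X_M(z)\mathbf M(z)e^{M(g(z)-\frac{l+i\pi}{2})\sg_3}\sg_3$ with $\mathbf X_M=\mathbf I+\bigO(M^{-1})$; this is exactly the paper's route, and that half of the proposition is handled correctly.

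For \eqref{mr9} there is a genuine gap: you assert that continuous analytic continuation ``identifies $\ga(0)$'' and then hedge to ``$\ga(0)+\ga(0)^{-1}=\pm 2\cos(\be/2)$'' without carrying the branch out. A priori $\ga(0)\in\{\pm e^{-i\be/2},\pm ie^{-i\be/2}\}$, and the latter two give $\pm 2\sin(\be/2)$, not $\pm 2\cos(\be/2)$; you never say why those are excluded, nor which of the remaining two signs wins. The branch is pinned down, e.g., by $\ga(z)^2=\sqrt{R(z)}/(z-e^{i\be})$ together with the paper's $\sqrt{R(0)}=-1$, giving $\ga(0)^2=e^{-i\be}$ (ruling out $\pm ie^{-i\be/2}$), and then by following $\ga$ along the negative real axis where $\ga(-\infty)=1$: a direct evaluation of $\log\ga(0)=\tfrac14\int_{-\infty}^0\bigl((w-e^{-i\be})^{-1}-(w-e^{i\be})^{-1}\bigr)\,dw$ gives $-i\be/2$, hence $\ga(0)=e^{-i\be/2}$ and $\ga(0)+\ga(0)^{-1}=+2\cos(\be/2)$. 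Substituting this and $g(0)=i\pi$ into \eqref{mr8} yields $p_M(0)=(-1)^M\cos(\be/2)(1+\bigO(M^{-1}))$, so $\rho_M=-p_M(0)=-(-1)^M\cos(\be/2)(1+\bigO(M^{-1}))$, which has the \emph{opposite} overall sign to the printed \eqref{mr9}. A sanity check supports this: $\rho_1=-p_1(0)$ equals the centroid of the nodes $L_{\al,m}$, which lies on the positive real axis, consistent with $-(-1)^M\cos(\be/2)>0$ at $M=1$ and inconsistent with $(-1)^M\cos(\be/2)<0$. So the $\pm$ you left unresolved is not harmless hedging; it is precisely the point that needs care, and resolving it reveals a sign subtlety that the paper's one-line proof also glosses over (the displayed \eqref{mr9} appears to carry a sign typo).
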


Finally, we give the asymptotic formula for the normalizing constant $h_M$.
\begin{prop}\label{asymhn}
As $M\to \infty$, the normalizing constants $h_M$ satisfy
\begin{equation}\label{mrlc}
h_M=\frac{e^{Ml}e^l}{\sin(\be/2)}\left(1+\bigO(M^{-1})\right).
\end{equation}
\end{prop}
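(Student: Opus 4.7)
The plan is to evaluate $h_M$ directly from the orthonormalization identity
\[
h_M = \frac{1}{m}\sum_{z\in L_{\alpha,m}}|p_M(z)|^2,
\]
by splitting the sum into a band contribution ($|\phi|<\beta$), a saturated contribution ($\beta\le|\phi|\le\alpha$), and a turning-point contribution near $\phi=\pm\beta$, and applying the asymptotic formulas of Propositions~\ref{monic_band}--\ref{turning_points}. In the saturated region, the factor $(1-z^m)$ appearing in Proposition~\ref{thmasym2} vanishes at each lattice point (since $z^m=1$ there), leaving only the $\bigO(e^{-cM})$ residual contribution, so that $|p_M(z)|^2=\bigO(e^{2M(L(\phi)-c)})$; summing over the $\bigO(M)$ saturated lattice points therefore produces an exponentially smaller contribution than $e^{Ml}$, provided the constant $c$ (built into the lens contour of the RH analysis of Section~\ref{RH_analysis}) dominates $L(\phi)-l/2$ uniformly. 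The turning-point region has measure $\bigO(M^{-2/3})$ and, after inserting $z^m=1$ into the Airy combinations of Proposition~\ref{turning_points}, contributes at most $\bigO(M^{-1/3}e^{Ml})$, which is a lower-order correction.

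In the band, Proposition~\ref{monic_band} gives
\[
|p_M(e^{i\phi})|^2=e^{Ml}\Bigl[J(\phi)^{1/2}\cos^2\theta+J(\phi)^{-1/2}\sin^2\theta-2\cos(\beta/2)\sin\theta\cos\theta\Bigr]\bigl(1+\bigO(M^{-1})\bigr),
\]
with $\theta=\theta(\phi)=M\pi I(\phi)-\pi/4$. Writing $\cos^2\theta=\tfrac12+\tfrac12\cos 2\theta$, $\sin^2\theta=\tfrac12-\tfrac12\cos 2\theta$, and $2\sin\theta\cos\theta=\sin 2\theta$, the non-oscillatory part of the sum (with spacing $2\pi/m$) produces a Riemann sum for $\tfrac{1}{4\pi}\int_{-\beta}^\beta(J^{1/2}+J^{-1/2})\,d\phi$, while the oscillatory pieces $\tfrac{1}{m}\sum F(\phi_k)e^{\pm 2iM\pi I(\phi_k)}$ can be controlled by Poisson summation: since $I'(\phi)=-\rho(\phi)<0$ throughout the interior of the band, the phases $2\pi I(\phi)\pm (\xi n)\phi$ have no stationary points in $(-\beta,\beta)$ for any integer $n$ with $|n|\neq 1$, and the $n=\pm 1$ Poisson harmonics have their stationary points only at the band endpoints, contributing $\bigO(M^{-1})$ after regularization. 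Using $J(\phi)=\sin\tfrac{\phi+\beta}{2}/\sin\tfrac{\beta-\phi}{2}$, a short trigonometric computation yields $J^{1/2}+J^{-1/2}=2\sqrt{2}\sin(\beta/2)\cos(\phi/2)/\sqrt{\cos\phi-\cos\beta}$, and the substitution $u=\sin(\phi/2)/\sin(\beta/2)$ reduces the remaining integral to $\int_{-1}^{1}\frac{2}{\sqrt{1-u^2}}du=2\pi$, so $\int_{-\beta}^\beta(J^{1/2}+J^{-1/2})d\phi=4\pi\sin(\beta/2)$. Hence $h_M=e^{Ml}\sin(\beta/2)\bigl(1+\bigO(M^{-1})\bigr)$.

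To recast this as the claimed $h_M=e^{Ml}e^l/\sin(\beta/2)$, I apply the identity $e^l=\sin^2(\beta/2)$. This follows from the Euler--Lagrange condition \eqref{eq15} evaluated at the interior point $\phi=0$, which states $l=2L(0)=2\int_{-\alpha}^{\alpha}\log|1-e^{i\theta}|\rho(\theta)d\theta$; substituting $|1-e^{i\theta}|=2\sin(|\theta|/2)$ and the explicit $\rho$ of Proposition~\ref{EqMeasure}, then applying the same substitution $u=\sin(\theta/2)/\sin(\beta/2)$, yields $L(0)=\log\sin(\beta/2)$ in closed form. A useful consistency check is the Szeg\H o-recursion identity $h_M/h_{M-1}=1-\rho_M^2=\sin^2(\beta/2)(1+\bigO(M^{-1}))=e^l(1+\bigO(M^{-1}))$, using Proposition~\ref{rho_M}; this reproduces the correct $M$-dependence $e^{Ml}$ but by itself cannot determine the multiplicative constant $e^l/\sin(\beta/2)$, which is precisely where the direct computation above is essential. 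The principal technical obstacle is the resonance $M\rho(\phi)\cdot(2\pi/m)=\bigO(1)$ between the lattice spacing and the oscillation scale of $\theta$ in the band: the naive Riemann-sum approximation is not immediate, and the Poisson-summation argument sketched above (together with stationary-phase analysis of each Poisson harmonic) is the careful mechanism by which the oscillatory corrections are shown to be $\bigO(M^{-1})$.
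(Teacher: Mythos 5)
Your proof takes a genuinely different --- and far more laborious --- route than the paper's. The paper's proof of Proposition~\ref{asymhn} is essentially a one-liner at the end of Section~\ref{RH_analysis}: for $z$ bounded away from $C_\al$ one has
\[
\mathbf P_M(z) = \sg_3\, e^{M\frac{l+i\pi}{2}\sg_3}\,\mathbf X_M(z)\,\mathbf M(z)\,e^{M\left(g(z)-\frac{l+i\pi}{2}\right)\sg_3}\sg_3 ,
\qquad \mathbf X_M(z) = \mathbf I + \bigO(M^{-1}),
\]
and setting $z=0$, using $g(0)=i\pi$ and $\ga(0)=e^{-i\be/2}$ so that $\mathbf M(0)_{21}=-\sin(\be/2)$, and matching with \eqref{IP4a}, whose $(21)$-entry is $1/h_{M-1}$, gives $h_{M-1}=\bigl(e^{Ml}/\sin(\be/2)\bigr)(1+\bigO(M^{-1}))$, i.e.\ exactly \eqref{mrlc}. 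You instead work from the Parseval identity $h_M=\frac1m\sum_{z\in L_{\al,m}}|p_M(z)|^2$ and the interior asymptotics of Propositions~\ref{monic_band}--\ref{turning_points}. Your band computation, including the evaluation $\int_{-\be}^\be\bigl(J^{1/2}+J^{-1/2}\bigr)\,d\phi = 4\pi\sin(\be/2)$ via $u=\sin(\phi/2)/\sin(\be/2)$, is correct and does recover the right constant $\sin(\be/2)$. The price is that you must control two contributions the paper's route never touches, and neither is actually resolved in your sketch: the resonant oscillatory band sum (you correctly identify that the $n=\pm1$ Poisson harmonics are stationary at $\pm\be$, but the delicate endpoint/Airy-region matching that shows this is $\bigO(M^{-1})$ is only named, not carried out), and the saturated sum (your bound is explicitly conditional on the lens constant $c$ dominating $L(\phi)-l/2$ uniformly, which you do not verify).

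The one outright error is the final conversion step. You need $e^l=\sin^2(\be/2)$ to recast $h_M\sim e^{Ml}\sin(\be/2)$ as the stated $e^{Ml}e^l/\sin(\be/2)$, and you assert that $L(0)=\log\sin(\be/2)$ follows ``in closed form'' by the same substitution $u=\sin(\theta/2)/\sin(\be/2)$. It does not: $L(0)=\int_{-\al}^\al\log|1-e^{i\th}|\rho(\th)\,d\th$ has a saturated-region piece over $[\be,\al]\cup[-\al,-\be]$ where $\rho\equiv\xi/(2\pi)$ and the substitution leaves a non-elementary integral, and even on the band the density carries the $\arctan$ factor of Proposition~\ref{EqMeasure}, so the integrand is not of beta-function type --- compare the explicit but manifestly non-elementary expression \eqref{eq39}. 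The identity $e^l=\sin^2(\be/2)$ is true, but the clean way to establish it is precisely your ``consistency check'' pushed one step further: once you have \emph{both} $h_M\sim Ce^{Ml}$ (from your band computation) and $\rho_M^2\to\cos^2(\be/2)$ (Proposition~\ref{rho_M}), the product formula \eqref{in3d} forces $e^l = \lim h_M/h_{M-1} = \lim(1-\rho_M^2)=\sin^2(\be/2)$. That argument is legitimate and should replace the spurious integral evaluation; alternatively, reading off both the $(12)$- and $(21)$-entries of $\mathbf P_M(0)$ in the paper's approach yields the same identity automatically.
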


For estimates involving $\left\lvert B_{N,M}^k(x)\right\rvert$, let
\begin{align}\label{chrdaym}
K_{n,M}(w)&=\frac{1}{m}\sum_{j=0}^{M-1}\sum_{z \in L_{\al,m}}\frac{1}{h_j}\overline{p_j(z)}p_j(w)z^n\nonumber\\&=\frac{1}{m}\sum_{z\in
L_{\al,m}}z^{n-M}\frac{1}{h_M}\frac{p_M(z)p_M^*(w)-p^*_M(z)p_M(w)}{1-\bar z w},
\end{align}
where in the second line we have used the Christoffel--Darboux formula, see e.g. \cite{Simon-OPUC-1}.
Using \eqref{BCD} along with \eqref{alb} and \eqref{phi_p_relation}, we immediately see that
\begin{equation}
B_{N,M}^k(x) =w^{k-M_0} - K_{k-M_0,M}(w)\,, \qquad w\equiv e^{2\pi ix/b}.
\end{equation}

We have:
\begin{prop}\label{subexpgr}
For $k$ fixed $1\le k\le M\le m\alpha/2\pi$ and $n=M+k<m$ there is a constant $d_k>0$ independent of $M$ such that
\begin{equation}\label{resgu}
|K_{M+k,M}(w)-w^{M+k}|\le d_k M^k |p^*_M(w)|.
\end{equation}
For $k$ fixed $k\le M\le m\alpha/2\pi$, and $i_0<M$, $i_0$ given in Lemma~\ref{lemc} and $n=M+k<m$ there is a constant $f_k>0$ such that
\begin{equation}\label{resgl}
f_k M^k|p^*_M(w)|\le|K_{M+k,M}(w)-w^{M+k}|.
\end{equation}
\end{prop}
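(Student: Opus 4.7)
The plan is to expand the error $E(w) := K_{M+k,M}(w)-w^{M+k}$ in the orthogonal polynomial basis, use the Szeg\H{o} recursion to reduce it to a bounded-degree polynomial combination of $p_M(w)$ and $p^*_M(w)$, and then bound the resulting $w$-polynomial coefficients by $O(M^{k})$.

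Since $z^{M+k}$ has degree $M+k$, we write $z^{M+k}=\sum_{j=0}^{M+k}c_j p_j(z)$ with $c_{M+k}=1$; because $K_{M+k,M}(w)$ is the orthogonal projection of $z^{M+k}$ onto $\mathrm{span}\{p_0,\ldots,p_{M-1}\}$, we get $E(w)=-\sum_{i=0}^{k}c_{M+i}\,p_{M+i}(w)$, and using $\overline{p_{M+i}(z)}=z^{-(M+i)}p^*_{M+i}(z)$ on the unit circle, $c_{M+i}=(m h_{M+i})^{-1}\sum_{z\in L_{\al,m}}z^{k-i}p^*_{M+i}(z)$. For $i<k$ the polynomial $z^{k-i}p^*_{M+i}$ has zero constant term and degree $M+k<m$, so $\sum_{z^m=1}z^{k-i}p^*_{M+i}(z)=0$, hence
\[
c_{M+i}=-\frac{1}{mh_{M+i}}\sum_{\zeta^m=1,\,\zeta\notin C_\al}\zeta^{k-i}\,p^*_{M+i}(\zeta),\qquad i<k.
\]
Iterating the Szeg\H{o} recursion $p_{j+1}=zp_j-\rho_{j+1}p^*_j$ writes $p_{M+i}(w)=A_i(w)p_M(w)+B_i(w)p^*_M(w)$ with $A_i,B_i$ polynomials of degree $\le i$ and coefficients polynomial in $\rho_{M+1},\ldots,\rho_{M+i}$, hence uniformly bounded in $M$ on $|w|\le 1$ (since $|\rho_j|\le 1$). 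Thus $E(w)=\Phi(w)p_M(w)+\Psi(w)p^*_M(w)$ with $\Phi,\Psi$ polynomials of degree $\le k$ in $w$, and since $|p_M(w)|=|p^*_M(w)|$ on $|w|=1$, the upper bound \eqref{resgu} reduces to $|c_{M+i}|=O(M^{k-i})$.

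To establish $|c_{M+i}|=O(M^{k-i})$, rewrite the complementary-roots sum as the contour integral $(2\pi i)^{-1}\oint_\Gamma z^{k-i}p^*_{M+i}(z)\,mz^{m-1}/(z^m-1)\,dz$ around the $m-N$ roots of unity in $\T\setminus C_\al$, and deform $\Gamma$ into the open unit disc. Inside the disc $p^*_{M+i}$ is well controlled: $p^*_{M+i}(0)=1$, and the unrolled identity $z^j-p_j(z)=\sum_{r=1}^{j}\rho_r\, z^{j-r}\, p^*_{r-1}(z)$ shows inductively that the Taylor coefficients of $p_j$ at $0$ grow at most polynomially in $j$ (e.g.\ the coefficient of $z^{M-1}$ in $p_M$ equals $-\rho_1+\sum_{r=1}^{M-1}\rho_r\rho_{r+1}=O(M)$). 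Combined with the asymptotic $h_{M+i}\sim e^{(M+i)l}/\sin(\be/2)$ from Proposition \ref{asymhn}, this produces the required estimate, with the factor $M^{k-i}$ arising from tracking $k-i$ iterations of the Szeg\H{o} recursion. For the lower bound \eqref{resgl}, use the index $i_0$ supplied by Lemma \ref{lemc} to isolate the single summand providing the dominant order-$M^k$ contribution in either $\Phi$ or $\Psi$, and verify non-vanishing of its limiting leading coefficient via $\rho_M\to(-1)^M\cos(\be/2)$ from Proposition \ref{rho_M}.

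The main obstacle is the polynomial bound on $c_{M+i}$: each individual value $p^*_{M+i}(\zeta)/h_{M+i}$ at $\zeta\in\T\setminus C_\al$ is exponentially large in $M$ (since $|p_M(z)|\sim e^{ML(\phi)}$ on $\T\setminus C_\al$ while $h_M\sim e^{Ml}$), so only strong oscillatory cancellation among the $O(M)$ summands can produce polynomial growth. The contour deformation into the open unit disc is the natural device to make these cancellations explicit, replacing the unruly boundary values of $p^*_{M+i}$ by its polynomially-growing Taylor coefficients at the origin, while the factor $mz^{m-1}/(z^m-1)\sim -1/z^{m+1}\cdot z^m$ near $z=0$ ensures that only a finite, combinatorially controlled portion of those Taylor coefficients contributes.
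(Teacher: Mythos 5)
Your decomposition $E(w)=K_{M+k,M}(w)-w^{M+k}=-\sum_{i=0}^{k}c_{M+i}p_{M+i}(w)$ with $c_{M+i}=x_{M+k,M+i}$, followed by Szeg\H{o} reduction to $\Phi(w)p_M(w)+\Psi(w)p^*_M(w)$ with degree-$\le k$ polynomial coefficients, is a correct restatement that parallels the paper's residue expansion \eqref{wknm} (there $r^*_{M,l}/h_M=x_{M+l,M}$), and the reduction to the claim $|c_{M+i}|=\bigO(M^{k-i})$ is also sound. The gap is in the proposed method for establishing that claim. The $m$-th roots of unity in $\T\setminus C_\al$ lie \emph{on} the unit circle, so a contour $\Gamma$ encircling them cannot be continuously deformed into the open unit disc without crossing those very poles. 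Carrying out what the deformation actually accomplishes --- pushing the inner arc of $\Gamma$ toward $0$, where $z^{k-i}p^*_{M+i}(z)\,mz^{m-1}/(z^m-1)$ is analytic, and the outer arc toward $\infty$, where the residue also vanishes since $1\le k-i$ and $M+k<m$ --- merely reproduces the vanishing identity $\sum_{\zeta^m=1}\zeta^{k-i}p^*_{M+i}(\zeta)=0$ you already invoked, giving no quantitative estimate. (Note also that near $z=0$ one has $mz^{m-1}/(z^m-1)\sim -mz^{m-1}$, not $-1/z$ as written.) The reference to $h_{M+i}\sim e^{(M+i)l}/\sin(\be/2)$ does not naturally enter such a residue computation at the origin, and the observation that the low-order Taylor coefficients of $p^*_{M+i}$ grow polynomially is not, by itself, enough to control the full normalized sum defining $c_{M+i}$.

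The paper establishes both bounds by an explicitly combinatorial argument instead. From the Szeg\H{o} recursion \eqref{pnzone}--\eqref{pnztwo} one obtains the split $p_{M+k,M}=p^1_{M+k,M}+p^2_{M+k,M}$ (Lemma \ref{lemp}), where the dominant piece $p^2_{M+k,M}$ in \eqref{pnk2} is a $k$-fold nested sum of on the order of $M^k$ products $\rho_{i_1}\rho_{i_1+1}\cdots\rho_{i_k}\rho_{i_k+1}$, while $p^1_{M+k,M}=\bigO(M^{k-1})$; Lemma \ref{lempx} performs the analogous split for $x_{M+k,M}$ and, crucially, also furnishes the \emph{lower} bound \eqref{ux2}, the delicate half of Proposition \ref{subexpgr}. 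That lower bound rests on Lemma \ref{lemc}: since $\rho_j\to(-1)^j\cos(\be/2)$ by Proposition \ref{rho_M}, one has $-\rho_i\rho_{i+1}\ge c>0$ for all $i\ge i_0$, so the summands with all indices at least $i_0$ are sign-coherent and cannot cancel, giving $|x^2_{M+k,M}|\ge c^k(M-i_0+1)^k/k!-\bigO(M^{k-1})$. Your sketch for the lower bound (``isolate the single summand providing the dominant $M^k$ contribution'') misidentifies the mechanism: the dominance comes from summing many order-one terms that share a sign, not from a single large summand, and verifying that sign coherence is exactly the missing step that Lemmas \ref{lemc} and \ref{lempx} supply.
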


With the above propositions we can now prove Theorems \ref{main_band}, \ref{upper}, and \ref{lower}.
\begin{proof}[Proof of Theorems \ref{main_band}, \ref{upper}, and \ref{lower}.]
We recall the relation \eqref{phi_p_relation} and the change of variable $\phi = 2\pi x/b$. Also note that the Lagrange multiplier $l$ is equal to $2L(\be)$ by \eqref{eq15} and \eqref{mr1b}.  Then Propositions \ref{monic_band} and \ref{asymhn} immediately imply Theorem \ref{main_band}, and Propositions \ref{thmasym2} and \ref{asymhn} immediately imply Theorem \ref{upper} with $\tilde L(x) := L(2\pi x/b)$. Also combining Propositions \ref{monic_band}, \ref{thmasym2}, \ref{hard_edge}, with \ref{turning_points} gives that Proposition \ref{subexpgr} immediately implies Theorem \ref{lower}, where we use the facts that $|p_M(y)| = |p_M^*(y)|$ and $L(\phi) \equiv l/2$ for $-\be \le \phi \le \be$.
\end{proof}

%

\section{Proof of Proposition \ref{thm:L_properties}}\label{L_properties}

The properties listed in Proposition \ref{thm:L_properties} are implied by from the following lemmas on the function $L(\phi; \al,\xi)$ as defined in \eqref{mr1b}.

\begin{lem}\label{Lphiin}
$L(\phi;\alpha,\xi)$ is an increasing function of $\phi$ for $\beta\le\phi\le\alpha$.
\end{lem}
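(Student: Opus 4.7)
The plan is to compute $L'(\phi)$ explicitly via Proposition~\ref{g-function} and show positivity on $(\beta,\alpha)$.

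Since $\log(z-e^{i\theta})$ has a purely imaginary jump across the unit circle, $\Re g(z)$ extends continuously to $\T$ with $L(\phi)=\Re g(e^{i\phi})$. The chain rule and Proposition~\ref{g-function} give
\[
L'(\phi)=\Re\bigl[ie^{i\phi}g'(e^{i\phi})\bigr]=-\frac{\xi}{\pi}\Im\arctan(u),\qquad u:=\frac{(e^{i\phi}+1)\tan(\pi/(2\xi))}{\sqrt{R(e^{i\phi})}}.
\]
For $\phi\in(\beta,\alpha)$ the point $e^{i\phi}$ lies off the cut $C_\beta$, so $\sqrt{R(e^{i\phi})}$ is single valued; tracking the branch $\sqrt R\sim z$ at $\infty$ along the radial path $z=re^{i\phi}$, $r\colon\infty\to 1$, together with the identity $R(e^{i\phi})=2e^{i\phi}(\cos\phi-\cos\beta)$, yields $\sqrt{R(e^{i\phi})}=ie^{i\phi/2}\sqrt{2(\cos\beta-\cos\phi)}$. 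Using $e^{i\phi}+1=2\cos(\phi/2)e^{i\phi/2}$ one then simplifies to $u=-iy(\phi)$, where
\[
y(\phi):=\frac{\sqrt{2}\,\cos(\phi/2)\tan(\pi/(2\xi))}{\sqrt{\cos\beta-\cos\phi}}.
\]
The defining equation \eqref{eq36} for $\beta$ reads $\cos\beta-\cos\alpha=2\cos^2(\alpha/2)\tan^2(\pi/(2\xi))$, giving $y(\alpha)=1$; a short calculation shows that $y^2$ is strictly decreasing in $\phi$ on $(0,\alpha)$, so $y(\phi)>1$ throughout $(\beta,\alpha)$.

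The remaining step is to fix the branch of $\arctan$ by analytic continuation from $z=\infty$, where $u\to\tan(\pi/(2\xi))>0$ and $\arctan$ takes the value $\pi/(2\xi)\in(0,\pi/2)$. As $z$ moves radially to $e^{i\phi}$, $u$ traces a curve from the positive real axis to the negative imaginary axis while remaining in the closed fourth quadrant and passing to the right of the branch point $u=-i$. This continuation yields
\[
\arctan(-iy)=\frac{\pi}{2}-\frac{i}{2}\log\frac{y+1}{y-1}\qquad(y>1),
\]
and therefore
\[
L'(\phi)=\frac{\xi}{2\pi}\log\frac{y(\phi)+1}{y(\phi)-1}>0\qquad\text{on }(\beta,\alpha).
\]
Monotonicity on the closed interval $[\beta,\alpha]$ then follows from continuity of $L$ at the endpoints.

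The most delicate step is pinning down the correct branch of $\arctan$: the natural radial continuation brings $u$ close to the branch point $-i$, and one must justify that the path remains in the closed fourth quadrant. A cleaner alternative that avoids branch tracking is direct differentiation of the integral $L(\phi)=\int_{-\alpha}^\alpha\log|e^{i\phi}-e^{i\theta}|\rho(\theta)\,d\theta$: split $\rho=\frac{\xi}{2\pi}-[\frac{\xi}{2\pi}-\rho]$, use the explicit evaluation $\frac{1}{2}\mathrm{p.v.}\!\int_{-\alpha}^{\alpha}\cot(\tfrac{\phi-\theta}{2})\,d\theta=\log[\sin(\tfrac{\phi+\alpha}{2})/\sin(\tfrac{\alpha-\phi}{2})]$, and exploit the equilibrium identity $L'\equiv 0$ on $(-\beta,\beta)$ to re-express the remaining integral against $\tfrac{\xi}{2\pi}-\rho$ on $(-\beta,\beta)$; one ultimately recovers the same positive formula.
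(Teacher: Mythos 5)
Your proof is correct, and it takes a genuinely different route from the paper's. The paper's argument differentiates the integral definition of $L(\phi)$ directly, splits the resulting $\phi$-derivative into pieces over the saturated regions and the band, and then performs several changes of variable to exhibit the derivative as a sum of manifestly nonnegative integrals. You instead observe that $L(\phi)=\Re g(e^{i\phi})$ and that the tangential derivative $L'(\phi)=\Re\bigl[ie^{i\phi}g'(e^{i\phi})\bigr]$ is single-valued on the circle (since the jump of $g$ across $\T$ is purely imaginary, and the jump of $g'$ contributes only to the real part of $ie^{i\phi}g'$, leaving $\Re$ unchanged). Feeding in the closed-form expression for $g'$ from Proposition~\ref{g-function} and carefully evaluating $\sqrt{R}$ and the $\arctan$ branch then gives the explicit and transparently positive formula
\[
L'(\phi)=\frac{\xi}{2\pi}\log\frac{y(\phi)+1}{y(\phi)-1},\qquad
y(\phi)=\frac{\sqrt{2}\,\cos(\phi/2)\tan(\pi/(2\xi))}{\sqrt{\cos\beta-\cos\phi}}>1 \ \text{on}\ (\beta,\alpha).
\]
This is cleaner than the paper's computation and has the bonus of giving a closed form for $L'$ (including the correct divergence $L'(\phi)\to+\infty$ as $\phi\to\alpha^-$). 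The cost is precisely the branch bookkeeping you flag: one must verify that as $z=re^{i\phi}$ decreases from $r=\infty$ to $r=1$, $u=\sqrt{S(z)}$ remains in the closed fourth quadrant so that the continuation of $\arctan$ picks the value $\tfrac{\pi}{2}-\tfrac{i}{2}\log\tfrac{y+1}{y-1}$ rather than $-\tfrac{\pi}{2}-\tfrac{i}{2}\log\tfrac{y+1}{y-1}$. Both branches actually give the same $\Re[ie^{i\phi}g']$ (they differ by a real shift $\xi$ in $ie^{i\phi}g'$), which matches the general fact that $L'$ is the same whether one approaches $\T$ from inside or outside; spelling this out would make the branch step safer. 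Your derivative formula correctly uses the identity $\tfrac{1}{2}(\cos\beta-\cos\phi)+\cos^2(\phi/2)=\tfrac{1+\cos\beta}{2}$ to show $y^2$ is decreasing and the defining equation \eqref{eq36} to get $y(\alpha)=1$, and there is no circularity since Proposition~\ref{g-function} is established in Section~\ref{eq_measure} independently of this lemma. The ``cleaner alternative'' you sketch at the end (splitting $\rho=\tfrac{\xi}{2\pi}-[\tfrac{\xi}{2\pi}-\rho]$) is yet a third decomposition, distinct from the paper's, and while plausible it is left undeveloped; the $g'$-route you carry out in full is the more convincing of the two.
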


\begin{proof} We show that $\frac{\partial}{\partial\phi}L(\phi;\alpha,\xi)>0$. Differentiating with respect to $\phi$
yields
\begin{align}\label{eq4}
\frac{\partial}{\partial\phi}L(\phi,\alpha,\xi)&=\frac\xi{4\pi}\int^{-\beta}_{-\alpha}
\frac{\sin (\phi-\theta)}{1-\cos(\phi-\theta)}\,d\theta +
\frac\xi{4\pi}\int^\alpha_{\beta}
\frac{\sin (\phi-\theta)}{1-\cos(\phi-\theta)}\,
d\theta\nonumber\\
&\quad +\frac12\int^\beta_{-\beta}
\frac{\sin(\phi-\theta)}{1-\cos(\phi-\theta)}\, \rho(\theta)\,d\theta
\nonumber\\
&=\frac\xi{4\pi}\int^\alpha_\beta
\frac{\sin(\phi+\theta)}{1-\cos(\phi+\theta)}\, d\theta+
\frac\xi{4\pi}\int^\alpha_\beta
\frac{\sin (\phi-\theta)}{1-\cos(\phi-\theta)}\,d\theta\nonumber\\
&\quad +\frac12\int^\beta_{-\beta}\frac{\sin(\phi-\theta)}{1-\cos(\phi-\theta)}\,
\rho(\theta)d\theta\nonumber\\
&=\frac\xi{4\pi}
\log\left(\frac{1-\cos(\phi+\alpha)}{1-\cos(\phi+\beta)}\right)+\frac\xi{4\pi}\log\left(\frac{1-\cos(\phi-\beta)}
{1-\cos(\phi-\alpha)}\right)\nonumber\\
&\quad +\frac12\int^\beta_{-\beta}\frac{\sin(\phi-\theta)}{1-\cos(\phi-\theta)}\,
\rho(\theta)d\theta\nonumber\\
&=\frac\xi{4\pi}\log
\frac{1-\cos(\phi+\alpha)}{1-\cos(\phi-\alpha)}+\frac\xi{4\pi}\log
\left(\frac{1-\cos(\phi-\beta)}{1-\cos(\phi+\beta)}\right)\nonumber\\
&\quad + \frac12\int^\beta_{-\beta}
\frac{\sin(\phi-\theta)}{1-\cos(\phi-\theta)}\,\rho(\theta)d\theta,
\end{align}
which equals
\begin{align}\label{eq5}
&=\frac\xi{4\pi}\int_{-\phi}^{\phi}\frac{\sin(\alpha-\theta)}{1-\cos(\alpha-\theta)}\tilde\rho(\theta)\,d\theta+\frac\xi{4\pi}\int_{-\phi}^{\phi}\frac{\sin(\alpha-\theta)}{1-\cos(\alpha-\theta)}(1-\tilde\rho(\theta))\,d\theta\nonumber\\&\quad+ \frac\xi{4\pi}\int^\beta_{-\beta}
\frac{\sin(\phi-\theta)}{1-\cos(\phi-\theta)}
(\hat\rho (\theta)-1)\,d\theta,
\end{align}
where
\beq
\hat\rho(\theta)=\frac2\pi\arctan\left(
\frac{\sqrt 2\tan \left(\frac\pi{2\xi}\right)\cos\frac\theta2}
{\sqrt{\cos\theta-\cos\beta}}\right),
\eeq
and $\tilde\rho(\theta)$ is the same as $\hat\rho(\theta)$ except $\cos\beta$ is replaced by $\cos\phi$. We now split the second integral in the above equation into an integral from $-\phi$ to $-\beta$, an integral from $-\beta$ to $\beta$, and a third integral from $\beta$ to $\phi$. Then we can use the fact that $\tilde\rho$ is a decreasing function of $\cos\phi$ while $\frac{\sin(\alpha-\theta)}{1-\cos(\alpha-\theta)}$ is a decreasing function of $\alpha$ to show that
\begin{align}
\frac{\partial}{\partial\phi}L(\phi;\alpha,\xi)&\ge\frac\xi{4\pi}\left(\int_{-\phi}^{\phi}\frac{\sin(\alpha-\theta)}{1-\cos(\alpha-\theta)}\tilde\rho(\theta)\,d\theta\nonumber\right.\\
&\quad\left.+\int_{\beta}^{\phi}\frac{\sin(\alpha-\theta)}{1-\cos(\alpha-\theta)}(1-\tilde\rho(\theta))\,d\theta+\int_{-\phi}^{-\beta}\frac{\sin(\alpha-\theta)}{1-\cos(\alpha-\theta)}(1-\tilde\rho(\theta))\,d\theta\right)\nonumber\\
&=\frac\xi{2\pi}\int_0^\phi\frac{\sin{\alpha}(\cos{\theta}-\cos{\alpha})}{(1-\cos(\alpha-\theta))(1-\cos(\alpha+\theta))}\tilde
\rho(\theta)d\theta\nonumber\\&+\frac\xi{2\pi}\int_{\beta}^\phi\frac{\sin{\alpha}(\cos{\theta}-\cos{\alpha})}{(1-\cos(\alpha-\theta))(1-\cos(\alpha+\theta))}(1-\tilde\rho(\theta))d\theta\ge0\nonumber,
\end{align}
since $\tilde\rho$ is a symmetric function of $\theta$. This proves the result.

\end{proof}

\begin{lem}\label{Lphi}
\begin{enumerate}[label=(\alph*)]
\item \label{Lphia}$L(\phi;\alpha,\xi)$ is an increasing function of $\xi$ for $-\be\le\phi\le\be$.
\item \label{Lphib}$L(\alpha;\alpha,\xi)$ is a decreasing function of $\xi$. 
\end{enumerate}
\end{lem}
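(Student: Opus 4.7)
The Euler--Lagrange equality in \eqref{eq15} yields $2L(\phi;\alpha,\xi)=l(\xi)$ for every $\phi\in[-\beta(\xi),\beta(\xi)]$, and \eqref{eq36} shows $\cos\beta$ is strictly decreasing in $\xi$, so the band $[-\beta,\beta]$ expands as $\xi$ grows; any fixed $\phi_0$ with $|\phi_0|\le\beta(\xi_0)$ therefore remains in the band for all $\xi\ge\xi_0$, and part~(a) is equivalent to showing $l'(\xi)>0$.

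The plan for part~(a) is to differentiate the identity $\tfrac12 l(\xi)=L(0;\alpha,\xi)=\int_{-\alpha}^{\alpha}\log|1-e^{i\theta}|\,\rho_\xi(\theta)\,d\theta$ directly. Splitting $\rho_\xi$ according to \eqref{eq35} into its band and saturated-region pieces, the continuity of $\rho_\xi$ at the moving endpoints $\pm\beta(\xi)$ cancels all boundary contributions and gives
\begin{equation*}
\tfrac12 l'(\xi)\;=\;\int_{-\alpha}^{\alpha}\log|1-e^{i\theta}|\,\partial_\xi\rho_\xi(\theta)\,d\theta.
\end{equation*}
Following the style of Lemma~\ref{Lphiin}, I would substitute $T=\tan(\pi/2\xi)$, insert the explicit form of the band density, and use the defining relation $\cos\beta=\cos\alpha+(1+\cos\alpha)T^2$ to recombine the band contribution with the saturated one into a single integrand of definite sign. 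A consistency check comes from \eqref{lm10}: $l(\xi)\to l_\infty<0$ as $\xi\to\infty$, so the claim amounts to $l(\xi)$ increasing monotonically to this continuous-limit value. A potentially useful reformulation starts from the envelope-theorem identity $\tfrac{d}{d\xi}H(\nu_{\eq,\xi})=-\frac{1}{2\pi}\int_{\mathrm{sat}}(2L-l)\,d\phi$ together with $H(\nu_{\eq,\xi})=-\tfrac12 l(\xi)-\tfrac{\xi}{4\pi}\int_{\mathrm{sat}}(2L-l)\,d\phi$, which algebraically reduces $l'(\xi)>0$ to the single statement that $\xi\mapsto\xi^{-1}\int_{\mathrm{sat}}(2L-l)\,d\phi$ is decreasing.

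For part~(b), use $L\equiv l/2$ on the band to write
\begin{equation*}
L(\alpha;\alpha,\xi)\;=\;\tfrac12 l(\xi)+\int_\beta^\alpha L'(\phi)\,d\phi.
\end{equation*}
For $\phi\in(\beta,\alpha)$ the derivative $L'(\phi)=-\Im\bigl(e^{i\phi}g'(e^{i\phi})\bigr)$ can be evaluated from Proposition~\ref{g-function}: using $R(e^{i\phi})=2e^{i\phi}(\cos\phi-\cos\beta)$, which is negative on this interval, together with $\arctan(iy)=-\tfrac{\pi}{2}+i\arctanh(1/y)$ for $y>1$, one obtains
\begin{equation*}
L'(\phi)\;=\;\frac{\xi}{\pi}\,\arctanh\!\left(\frac{\sqrt{\cos\beta-\cos\phi}}{\sqrt{2}\tan(\pi/2\xi)\cos(\phi/2)}\right),
\end{equation*}
which vanishes at $\phi=\beta$ (matching the flat band value) and has an integrable logarithmic singularity at $\phi=\alpha$ where its argument tends to $1$. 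This reduces $L(\alpha;\alpha,\xi)$ to a one-dimensional integral in elementary functions of $\xi$. Differentiating in $\xi$ and combining with the already-positive contribution $l'(\xi)/2$ from part~(a), the plan is to show the net is negative by exploiting the strong response of the arctanh integrand near $\phi=\alpha$ to increases in $\xi$: pushing $\beta$ closer to $\alpha$ concentrates the logarithmic singularity and drives the integral term down faster than $l(\xi)/2$ rises.

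I expect the main obstacle in both parts to be the sign analysis of the $\xi$-derivative of integrals whose integrand, limits of integration, and implicit parameter $\beta=\beta(\xi)$ all vary simultaneously; part~(b) is especially delicate because the now-established positive contribution $l'(\xi)/2$ from part~(a) must be overcome by the derivative of the saturated-region integral, and a careful estimate near the endpoint $\phi=\alpha$, where the integrand has its logarithmic singularity, is likely to be the key quantitative step.
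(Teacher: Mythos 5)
Your reductions are sound: for part~(a), since $\beta(\xi)$ is increasing (by \eqref{eq36}), a $\phi$ in the interior of the band stays there as $\xi$ grows, so the claim really does reduce to $l'(\xi)>0$; and for part~(b), writing $L(\alpha;\alpha,\xi)=\tfrac12 l(\xi)+\int_\beta^\alpha L'(\phi)\,d\phi$ with $L'(\phi)=\tfrac{\xi}{\pi}\arctanh\bigl(\sqrt{\cos\beta-\cos\phi}\,/(\sqrt2\tan(\pi/2\xi)\cos(\phi/2))\bigr)$ is correct (I verified the formula against Proposition~\ref{g-function}). However, what you have written is a plan, not a proof. The decisive steps are all deferred: ``recombine the band contribution with the saturated one into a single integrand of definite sign,'' ``show the net is negative by exploiting the strong response of the arctanh integrand,'' and the parenthetical envelope-theorem reformulation are statements of intent, not arguments. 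For part~(b) in particular the two pieces pull in opposite directions --- $l'(\xi)>0$ pushes $L(\alpha)$ up while the moving endpoint $\beta(\xi)\to\alpha$ shrinks the saturated integral --- and nothing in the proposal quantifies which effect wins.

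The paper's proof supplies exactly the ingredient you are missing. Differentiating \eqref{mr1b} in $\xi$, after the Leibniz boundary terms cancel (which you correctly note), and simplifying the $\xi$-derivative of the band density using $\cos\beta=\cos\alpha+(1+\cos\alpha)\tan^2(\pi/2\xi)$, one obtains the clean identity $\tfrac{d}{d\xi}L(\phi;\alpha,\xi)=\tfrac1\xi\bigl(L(\phi;\alpha,\xi)-L_\beta(\phi;\beta)\bigr)$, where $L_\beta$ is the log-potential of the \emph{unconstrained} equilibrium measure on $[-\beta,\beta]$. This single identity treats both parts of the lemma uniformly: for $\phi$ in the band it reads $\tfrac12 l'(\xi)=\tfrac1{2\xi}(l-l_\beta)$, and positivity follows from comparing the explicit Lagrange-multiplier formulas \eqref{eq39} and \eqref{lm10}; for $\phi=\alpha$ it reads $\tfrac{d}{d\xi}L(\alpha)=\tfrac1\xi\bigl(g(e^{i\alpha})-g_\beta(e^{i\alpha})\bigr)$, and negativity is then established by writing the difference as a contour integral of $g'-g_\beta'$ and using the pointwise inequalities $\arctan(ab)>a\arctan(b)$ and its $\arctanh$ analogue. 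Without this identity (or a substitute), the sign analysis you sketch is not under control --- especially in part~(b), where your proposed route requires showing that a differentiated integral with a moving logarithmic singularity dominates a positive term, and no estimate is offered to make that heuristic precise.
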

\begin{proof}
We have
\begin{align}
\frac{d}{d\xi} L&(\phi;\alpha,\xi)=\frac{1}{\xi} L(\phi;\alpha,\xi)\nonumber\\&-\frac{1}{2\sqrt{2}\pi\xi}\int_{-\beta}^{\beta}\ln(2(1-\cos(\phi-\theta))\frac{\sec^2(\frac{\pi}{2\xi})\cos(\frac{\theta}{2})(\cos\theta-\cos\alpha)d\theta}{\sqrt{\cos\theta-\cos\beta}(2\tan^2(\frac{\pi}{2\xi})\cos^2(\frac{\theta}{2})+\cos\theta-\cos\beta)}.
\end{align}
The last integral  simplifies to 
\beq\label{Lphi1}
\frac{1}{2\sqrt{2}\pi\xi}\int_{-\beta}^{\beta}\ln(2(1-\cos(\phi-\theta))\frac{\cos(\frac{\theta}{2})}{\sqrt{\cos\theta-\cos\beta}}d\theta.
\eeq
Introduce the notation
\beq
d\nu_\eq^\alpha(\theta) := \lim_{\xi\to\infty} \rho(\theta)\,d\theta.
\eeq
Up to a change of variable $\theta=2\pi x/b$, this is exactly the unconstrained equilibrium measure defined in \eqref{unconstrained}. Then \eqref{Lphi1} can be written as
\beq
\frac{1}{\xi}L_\be(\phi; \be)\, ; \qquad L_\be(\phi; \be):= \int_{-\beta}^{\beta}\ln|e^{i\phi}-e^{i\theta}|d\nu_{\eq}^{\beta}(\theta),
\eeq
i.e., it is $1/\xi$ times the logarithmic transform of the unconstrained equilibrium measure on the interval $[-\be,\be]$. Therefore we have that
\beq\label{eq:diff_L}
\frac{d}{d\xi} L(\phi;\alpha,\xi)= \frac{1}{\xi}\left(L(\phi;\alpha,\xi)- L_\be(\phi; \be)\right).
\eeq
Consider first the case that $-\be\le \phi\le \be$. Recall that $L(\phi;\alpha,\xi)$ is constant on this interval, and is equal to $l/2$, where $l$ is the Lagrange multiplier \eqref{eq39}. Similarly, $L_\be(\phi; \be)$ is constant on this interval and is given as $l_\be/2$, where $l_\be$ is given by the formula \eqref{lm10} with $\be$ replacing $\al$. 

Part \ref{Lphia} of the lemma is then proven provided that $l>l_\be$. This follows from general potential theoretic considerations, see \cite[Chapter II, Theorem 4.4]{Saff-Totik97}, but since we have explicit formulas for $l$ and $l_\be$, we can compute the difference directly.
Using \eqref{eq39} and \eqref{lm10}, we have
\beq\label{eq:diff_l}
l-l_\be =\int_1^{\sqrt{\frac{2}{1-B}}} \log\left[\frac{1+Bx^2+x\sqrt{1+B}\sqrt{2-x^2(1-B)}}{x^2-1}\right]\left(1-\frac{2\xi\tan\left(\frac{\pi}{2\xi}\right)}{\pi\left(1+x^2\tan\left(\frac{\pi}{2\xi}\right)^2\right)}\right)\,dx.
\eeq
Since the logarithm in the integrand is clearly positive, and
\beq
\frac{2\xi\tan\left(\frac{\pi}{2\xi}\right)}{\pi\left(1+x^2\tan\left(\frac{\pi}{2\xi}\right)^2\right)}\le\frac{2\xi\tan\left(\frac{\pi}{2\xi}\right)}{\pi\left(1+\tan\left(\frac{\pi}{2\xi}\right)^2\right)} = \frac{\sin\left(\frac{\pi}{2\xi}\right)\cos\left(\frac{\pi}{2\xi}\right)}{\frac{\pi}{2\xi}}<1,
\eeq
we have that \eqref{eq:diff_l} is positive, and part \ref{Lphia} of the lemma is proved.

Now consider \eqref{eq:diff_L} for $\phi = \al$. To prove part \ref{Lphib} of the lemma, we need to show that $L(\al;\alpha,\xi)- L_\be(\al; \be)<0$, or equivalently 
\beq
g(e^{i\al}) - g_\be(e^{i\al})<0,
\eeq
where $g(z)$ is defined in \eqref{eq20}, and
\beq
g_\al(z):=\lim_{\xi\to\infty} g(z).
\eeq
We will use the explicit formula \eqref{eq37} for $g'(z)$. Taking the limit of that formula as $\xi\to\infty$ and replacing $\al$ with $\be$, we find the explicit formula for $g_\be'(z)$:
\beq
g_\be'(z)= \frac{1}{2z} + \frac{1}{2z}\frac{z+1}{\sqrt{R(z)}}.
\eeq
Since $g(z)\sim \log(z)+\bigO(1/z)$ and $g_\be(z)\sim \log(z)+\bigO(1/z)$ as $z\to\infty$, we find
\beq
\begin{aligned}
g(e^{i\al}) - g_\be(e^{i\al}) &= \int_\infty^{e^{i\al}} \left(g'(z)- g_\be '(z)\right)\,dz \\
&=  \int_\infty^{e^{i\al}}\left(\frac{\xi}{\pi z}\arctan\left(\frac{(z+1)\tan\left(\frac{\pi}{2\xi}\right)}{\sqrt{R(z)}}\right) -  \frac{1}{2z}\frac{z+1}{\sqrt{R(z)}}\right)\,dz.
\end{aligned}
\eeq
We take the contour of integration to be the union of two pieces: the negative real axis from $-\infty$ to $-1$, and the arc of the unit circle which connects $-1$ to $e^{i\al}$, oriented clockwise. This gives
\beq\label{eq:diff:g}
\begin{aligned}
g(e^{i\al}) - g_\be(e^{i\al}) &= \int_{-\infty}^{-1}
\frac{\xi}{\pi z}\left(\arctan\left(\frac{(z+1)\tan\left(\frac{\pi}{2\xi}\right)}{\sqrt{R(z)}}\right) -  \frac{\pi}{2\xi}\frac{(z+1)}{\sqrt{R(z)}}\right)\,dz \\
&\quad  -\frac{\xi}{\pi}  \int_\al^\pi \left[\arctanh\left(\frac{\sqrt{2}\cos(\theta/2)\tan\left(\frac{\pi}{2\xi}\right)}{\sqrt{\cos\beta-\cos\theta}}\right) - \frac{\pi}{2\xi}\frac{\sqrt{2}\cos(\theta/2)}{\sqrt{\cos\beta-\cos\theta}}\right]\,d\theta.
\end{aligned}
\eeq
Consider first the integral over the negative real axis. Recall that $\sqrt{R(z)}<0$ for $z<1$, and note that 
\beq
0<\frac{(z+1)}{\sqrt{R(z)}}<1\,, \quad \textrm{for} \  -\infty < z< -1,
\eeq
which can be checked by noting that the ratio approaches 1 as $z\to-\infty$ and is strictly decreasing and positive for $-\infty<z<-1$. Then the general inequality
\beq
\arctan(ab) > a \arctan(b) \quad \textrm{for} \ b>0, \ 0<a<1,
\eeq
implies that the first integrand in \eqref{eq:diff:g} is strictly negative. A similar argument involving $\arctanh$ instead of $\arctan$ implies that the second integrand in \eqref{eq:diff:g} is strictly positive. Thus we have that the difference of the two integrals in \eqref{eq:diff:g} is negative, which shows that \eqref{eq:diff_L} is negative for $\phi = \al$, completing the proof of part \ref{Lphib} of the lemma.

%
%
%

\end{proof}

\begin{lem}\label{Lpiovertwo}
$$L\left(\frac\pi2;\frac\pi 2,2\right)=0=L(\phi;\pi,\xi)$$
\end{lem}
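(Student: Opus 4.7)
The plan is to verify each of the two equalities by recognizing that both choices of parameters correspond to degenerate cases of the equilibrium measure from Proposition \ref{EqMeasure}, in which $L$ collapses to a classical logarithmic integral with a uniform density.

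For the identity $L(\phi;\pi,\xi)=0$, I would begin by substituting $\al=\pi$ into \eqref{eq36}, which gives $\cos\be = -1 + (1+(-1))\tan^2(\pi/(2\xi)) = -1$, so $\be = \pi$. Hence the saturated region $[-\al,-\be]\cup[\be,\al]$ is empty and the band is all of $[-\pi,\pi]$. Since $\cos\be=-1$, on this interval one has $\sqrt{\cos\theta-\cos\be} = \sqrt{2}\cos(\theta/2)$, which exactly cancels the $\cos(\theta/2)$ in the numerator of the band density in \eqref{eq35}, leaving
\[
\rho(\theta) = \frac{\xi}{\pi^2}\arctan\bigl(\tan(\pi/(2\xi))\bigr) = \frac{1}{2\pi}
\]
uniformly on $[-\pi,\pi]$. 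The claim then reduces to the classical identity $\int_{-\pi}^{\pi}\log|e^{i\phi}-e^{i\theta}|\,\frac{d\theta}{2\pi}=0$ for every $|\phi|\le\pi$, which follows from $2\pi$-periodicity together with $\int_0^{2\pi}\log|1-e^{i\theta}|\,d\theta=0$ (Jensen's formula, or the statement that the unit circle has logarithmic capacity one).

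For the identity $L(\pi/2;\pi/2,2)=0$ I would exploit the opposite degeneration: substituting $\al=\pi/2$, $\xi=2$ into \eqref{eq36} gives $\cos\be = 0 + 1\cdot\tan^2(\pi/4) = 1$, so $\be=0$ and the band collapses to the single point $\theta=0$. The entire equilibrium measure therefore sits in the saturated region $[-\pi/2,\pi/2]$ with constant density $\xi/(2\pi)=1/\pi$, so
\[
L(\pi/2;\pi/2,2) = \frac{1}{\pi}\int_{-\pi/2}^{\pi/2}\log\bigl|e^{i\pi/2}-e^{i\theta}\bigr|\,d\theta.
\]
Using $|e^{i\pi/2}-e^{i\theta}| = 2\sin\bigl((\pi/2-\theta)/2\bigr)$ on this interval and substituting $u=(\pi/2-\theta)/2$ turns this into $\frac{2}{\pi}\int_0^{\pi/2}\log(2\sin u)\,du$, which vanishes by the classical integral $\int_0^{\pi/2}\log(\sin u)\,du = -\frac{\pi}{2}\log 2$.

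There is no real obstacle here; the only content of the lemma is to identify two distinguished parameter values at which the equilibrium measure is uniform and to reduce $L$ to a known potential-theoretic integral. The main thing to verify carefully is the cancellation that turns the band density into a constant in the first case and the fact that the band actually shrinks to a point in the second, both of which are immediate from \eqref{eq35} and \eqref{eq36}. These identities presumably serve as anchor values in subsequent monotonicity arguments for $L(\phi;\al,\xi)$ used to pin down the sign of $\tilde L$ in Proposition \ref{thm:L_properties}, particularly parts \ref{thm:L_properties4} and \ref{thm:L_properties5}.
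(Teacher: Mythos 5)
Your proof is correct and follows essentially the same strategy as the paper's: identify the two degenerate parameter choices at which the equilibrium density is constant (via $\beta=0$ for $\alpha=\pi/2,\ \xi=2$ and $\beta=\pi$ for $\alpha=\pi$), and reduce $L$ to the classical vanishing integral $\int_0^{\pi}\log\big(2(1-\cos u)\big)\,du=0$. Incidentally, your value $\rho\equiv 1/\pi$ in the $\alpha=\pi/2,\ \xi=2$ case (the saturated value $\xi/(2\pi)$, since $\beta=0$ makes the whole interval saturated) is the correct one; the paper's proof writes $\rho=\xi/\pi^2$, apparently a slip, though this is harmless there because the relevant integral vanishes regardless of the constant prefactor.
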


\begin{proof}
If $\xi=2$ then $\tan\frac\pi{2\xi}=\tan\frac\pi 4=1$ and
$\cos\beta=1$ so $\rho(\theta)=\frac\xi{\pi^2}$. Thus
$$L\left(\frac\pi2,\frac\pi 2,2\right)=\frac1{\pi^2}
\int^{\pi/2}_{-\pi/2}\ln( 2\left(1-\cos\left(\frac\pi 2-\theta\right)
\right))d\theta.$$
But a residue calculation shows that 
$\int^\pi_0 \ln (1-\cos \theta)d\theta=-\pi\ln 2$ so we obtain
the first equality. If $\alpha=\pi$ then $\beta=\pi$ and $\rho\equiv\frac{1}{2\pi}$ so
$$
L(\alpha,\pi,\xi)=\frac{1}{2\pi}\int_{-\pi}^{\pi}\ln(2(1-\cos(\phi-\theta)))d\theta.
$$
The result now follows from the periodicity of cosine and the first part of the lemma.
\end{proof}

\begin{lem}\label{cl}
The function $L\left(\alpha,\alpha,\frac{\pi}{\alpha}\right)$ is concave down on $[\frac{\pi}{2},\pi]$ and zero at $\frac{\pi}{2}$ and $\pi$. It is not a constant and therefore it has a unique maximum. 
\end{lem}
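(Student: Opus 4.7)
The plan is to exploit a special simplification available at $\xi = \pi/\alpha$. First I would substitute $\xi = \pi/\alpha$ (equivalently $\tan(\pi/(2\xi)) = \tan(\alpha/2)$) into equation \eqref{eq36} to obtain
\[
\cos\beta = \cos\alpha + (1+\cos\alpha)\tan^2(\alpha/2) = \cos\alpha + (1-\cos\alpha) = 1,
\]
so $\beta = 0$. By Proposition \ref{EqMeasure} this collapses the band to a single point, and the equilibrium measure becomes the \emph{uniform} measure on $[-\alpha,\alpha]$ with density $\xi/(2\pi) = 1/(2\alpha)$.

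Next, writing $F(\alpha) := L(\alpha;\alpha,\pi/\alpha)$ and applying the substitutions $u = \alpha - \theta$ followed by $u = 2\alpha v$, I would obtain the clean integral representation
\[
F(\alpha) = \frac{1}{2\alpha}\int_{-\alpha}^{\alpha}\log|e^{i\alpha}-e^{i\theta}|\,d\theta = \frac{1}{2\alpha}\int_0^{2\alpha}\log(2\sin(u/2))\,du = \int_0^1 \log(2\sin(\alpha v))\,dv.
\]
The boundary values $F(\pi/2) = 0$ and $F(\pi) = 0$ then follow directly from Lemma \ref{Lpiovertwo}, settling the endpoint claim.

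With the parameter-free form of $F$ in hand, strict concavity becomes a one-line calculation: differentiating twice under the integral sign,
\[
F''(\alpha) = -\int_0^1 v^2\csc^2(\alpha v)\,dv,
\]
which is manifestly strictly negative for every $\alpha \in (\pi/2,\pi)$. The exchange of differentiation and integration needs only a brief verification: after one or two $\alpha$-derivatives the logarithmic singularity of the integrand of $F$ at $v=0$ disappears, since $v\cot(\alpha v)$ and $v^2\csc^2(\alpha v)$ are uniformly bounded for $(v,\alpha) \in [0,1]\times K$ for any compact $K \subset (\pi/2,\pi)$. Since $F$ is strictly concave on $(\pi/2,\pi)$ and vanishes at both endpoints, $F>0$ on the interior (so $F$ is not constant), and strict concavity forces the maximum to be attained at a unique point.

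I expect no serious obstacle. The only step requiring any insight is the opening observation that $\xi = \pi/\alpha$ drives $\beta$ to zero, converting what a priori is a two-parameter constrained equilibrium problem into a trivial computation against a uniform density. Once that is noticed, the remainder is a textbook differentiation-under-the-integral-sign argument.
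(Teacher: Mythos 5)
Your proposal is correct and follows essentially the same route as the paper's proof: you identify that $\xi=\pi/\alpha$ forces $\cos\beta=1$ so the equilibrium density is uniform, reduce $L(\alpha,\alpha,\pi/\alpha)$ to the same one-parameter integral (your $\int_0^1\log(2\sin(\alpha v))\,dv$ is exactly the paper's $\frac12\int_0^1\log(2(1-\cos 2\alpha\theta))\,d\theta$), compute the same second derivative, and invoke Lemma~\ref{Lpiovertwo} for the endpoint values. The only (cosmetic) difference is that you make the $\beta=0$ observation explicit and work with $\csc^2$ instead of $1/(1-\cos)$, and you add the small but clean remark that strict concavity plus vanishing at the endpoints also gives positivity in the interior.
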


\begin{proof}
As indicated above, $L\left(\alpha,\alpha,\frac{\pi}{\alpha}\right)$ is equal to zero for $\alpha=\frac{\pi}{2}$ and ${\pi}$. If we set $\phi=\alpha$ and $\xi=\frac{\pi}{\alpha}$ we find

\begin{align}\label{laaa}
L\left(\alpha,\alpha,\frac{\pi}{\alpha}\right)&=\frac{1}{4\alpha}\int_{-\alpha}^{\alpha}\ln(2(1-\cos(\alpha-\theta))d\theta\nonumber\\&=\frac{1}{4\alpha}\int_{0}^{2\alpha}\ln(2(1-\cos(\theta))d\theta=\frac{1}{2}\int_{0}^{1}\ln(2(1-\cos(2\alpha\theta))d\theta.
\end{align}The second derivative is
$$
\frac{d^2}{d\alpha^2} L\left(\alpha,\alpha,\frac{\pi}{\alpha}\right)=-2\int_{0}^{1}\frac{\theta^2}{1-\cos2\alpha\theta}\,d\theta.
$$
Since the integrand is continuous the interchange of differentiation and integration is allowed and since the integral is negative the lemma follows.
\end{proof}

\begin{rem}
We note that $L\left(\alpha,\alpha,\frac{\pi}{\alpha}\right)=-\frac{{\rm Cl}_2(2\alpha)}{2\alpha}$ where 
$$
{\rm Cl}_2(\theta)=-\int_{0}^{\theta}\ln\left(2\sin\frac{t}{2}\right) dt,
$$
is Clausen's integral. It can be written as
$$
\frac{{\rm Cl}_2(\theta)}{\theta}=1-\ln|\theta|+\sum_{n=1}^{\infty}\frac{\zeta(2n)}{n(2n+1)}\left(\frac{\theta}{2\pi}\right)^n,
$$
where $\zeta$  is the Riemann $\zeta$-function.
\end{rem}

We can now prove Proposition \ref{thm:L_properties}.
\begin{proof}[Proof of Proposition \ref{thm:L_properties}]
Proposition  \ref{thm:L_properties}\ref{thm:L_properties1} follows immediately from the Euler--Lagrange conditions \eqref{eq15} and Lemma \ref{Lphiin}, Proposition   \ref{thm:L_properties}\ref{thm:L_properties2} from Lemma \ref{Lphi}\ref{Lphib} with $\tilde{L}(x)= L(2\pi x/b)$. Proposition  \ref{thm:L_properties}\ref{thm:L_properties3} follows from Lemma \ref{Lphi}\ref{Lphia} along with equation \eqref{lm10}.

According to Lemma \ref{Lphiin}, as a function of $\phi$, $L(\phi; \al, \xi)$ has its maximum at $\phi=\al$. According to Lemma \ref{Lphi} as a function of $\xi$ it has its maximum at $\xi=\pi/\al$. Lemmas \ref{Lpiovertwo} and \ref{cl} imply that $L(\al; \al, \pi/\al)$ is negative for all $\al<\pi/2$, therefore whenever $\al<\pi/2$ we have $L(\phi; \al, \xi)<0$. Since $\al=\pi/b$, this is equivalent to $\tilde L(x; b, \tilde\xi)<$ for all $b>2$, proving Proposition \ref{thm:L_properties}\ref{thm:L_properties4}.

To prove Proposition \ref{thm:L_properties}\ref{thm:L_properties5}, we simply note that as $\xi\to\infty$, the saturated region vanishes as the point $\beta$ which separates the saturated region from the band approaches $\alpha$. Thus any fixed $\phi\in (-\al,\al)$, is in the band for large enough $\xi$, and in the band we have $L(\phi) = l/2$, where $l$ is the Lagrange multiplier given in \eqref{eq39}. Since it was shown in Proposition \ref{prop:lm} that the Lagrange multiplier is negative for all $\al\in(0,\pi)$ and $\xi>b$, we find that $L(\phi; \al,\xi)<0$ for large enough $\xi$, and equivalently $\tilde L(x; b,\tilde\xi)<0$ for large enough $\tilde\xi$. Then Proposition \ref{thm:L_properties}\ref{thm:L_properties5} follows since $L(\al; \al,\xi)$ is decreasing in $\xi$, thus $\tilde L(1/2; b,\tilde\xi)$ is decreasing in $\tilde\xi$.

\end{proof}

{\bf Determination of $\tilde\xi_b$.} For $\al<\pi/2$, $L(\phi; \al, \xi)$ is negative for all $\phi$ provided that $L(\al; \al, \xi)<0$. Thus, for fixed $\al\in (0,\pi/2)$, $L(\phi; \al, \xi)$ is strictly negative provided $\xi>\xi_\al$, where $\xi_\al$ is determined implicitly by the equation $L(\al; \al, \xi_\al) =0$, or equivalently,
\begin{equation}\label{xi_alpha_def}
\int_{-\al}^\al \log\left(2(1-\cos(\al - \theta)\right)\rho(\theta)\,d\theta = 0,
\end{equation}
where the dependence on $\xi$ comes entirely from the equilibrium measure density $\rho$. Then the number $\tilde\xi_b$ is given as $\tilde\xi_b =  \xi_{\pi/b}/b$. As noted in Remark \ref{rem1}, it appears there is a very simple formula for the value of $\xi$ which solves \eqref{xi_alpha_def}. Numerical calculations provide strong evidence that
\beq 
\xi_\al = \frac{\pi}{\pi-\al} \quad \textrm{for} \quad \frac{\pi}{2}\le\al<\pi,
\eeq
but we are unfortunately unable to prove this fact analytically.

\medskip

Figure \ref{lphi1} shows the graph of $L(\phi)$ with $\al = 5\pi/6$ and $\xi<\xi_\al$.  Figure \ref{lphi2} shows the graph of $L(\phi)$ with the same value of $\al$, but now with $\xi>\xi_\al$. Note that the constant value attained by $L(\phi)$ for $-\be \le \phi \le \be$ has increased from approximately $-0.09$ in Figure \ref{lphi1} to approximately $-0.035$ in Figure \ref{lphi2}, in accordance with Proposition \ref{thm:L_properties}\ref{thm:L_properties3}. But the value at the endpoint $\phi=\al$ has decreased from a positive value in Figure \ref{lphi1} to a negative one in Figure \ref{lphi2}, in accordance with Proposition \ref{thm:L_properties}\ref{thm:L_properties2}.


\begin{figure}[ht]
    \begin{minipage}[t]{0.45\linewidth}
  \centering
  \includegraphics[width=0.7\linewidth]{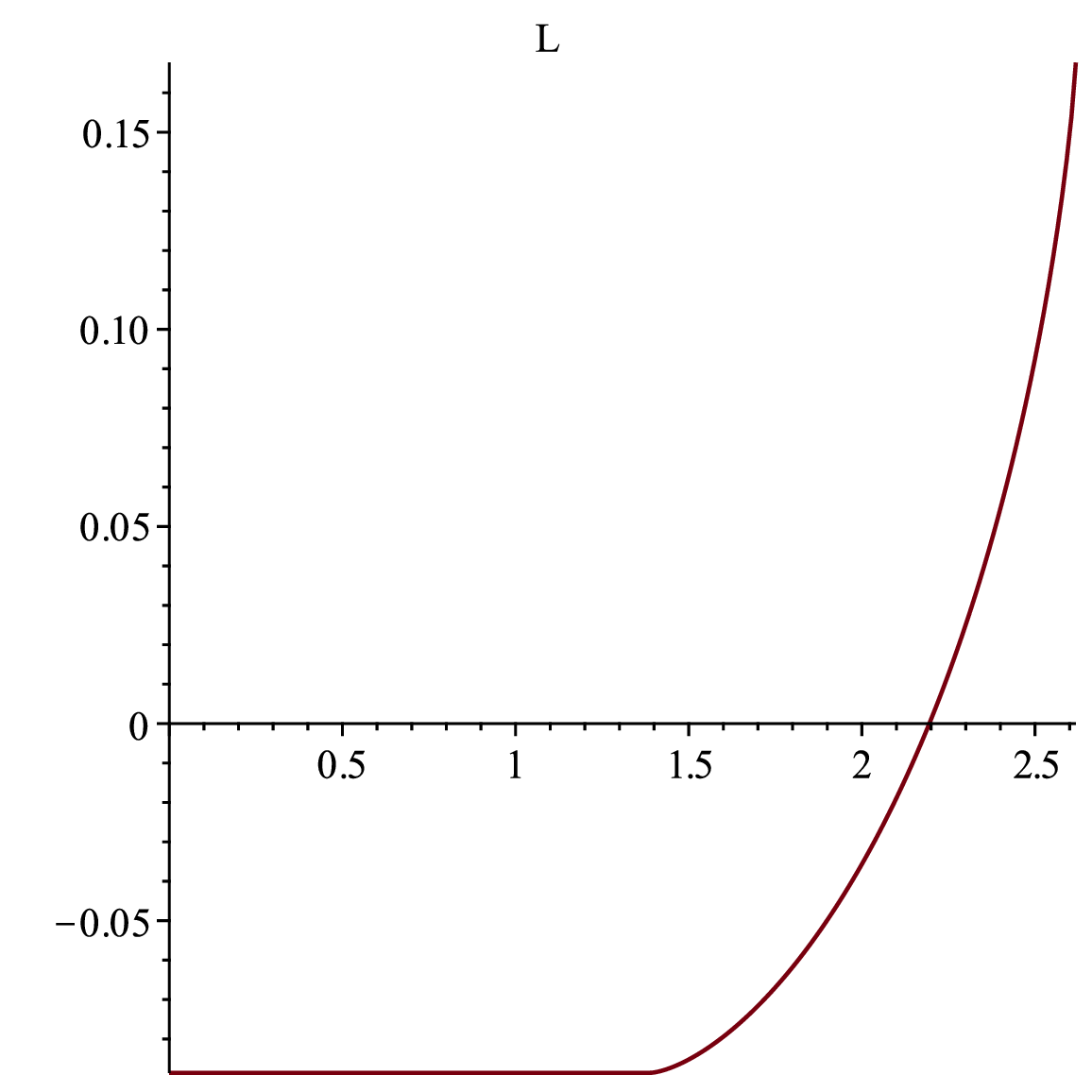}
  \caption{A plot of the function $L(\phi; \al = 5\pi/6, \xi = 32/25)$ for $0\le \phi \le\al$. For these values of $\al$ and $\xi$, the value of $\beta$ is approximately $\be \approx 1.389$. On the interval $(-\be, \be)$ the function $L(\phi)$ is constant and negative, then increases on the interval $(\be, \al)$. In this case $\xi<\xi_\al\approx 5$, so $L(\phi)$ is positive on an interval close to the endpoints $\pm \al$.}  
  \label{lphi1}
      \end{minipage}
      \hspace{\stretch{1}}
    \begin{minipage}[t]{0.5\linewidth}
      \centering
  \includegraphics[width=0.7\linewidth]{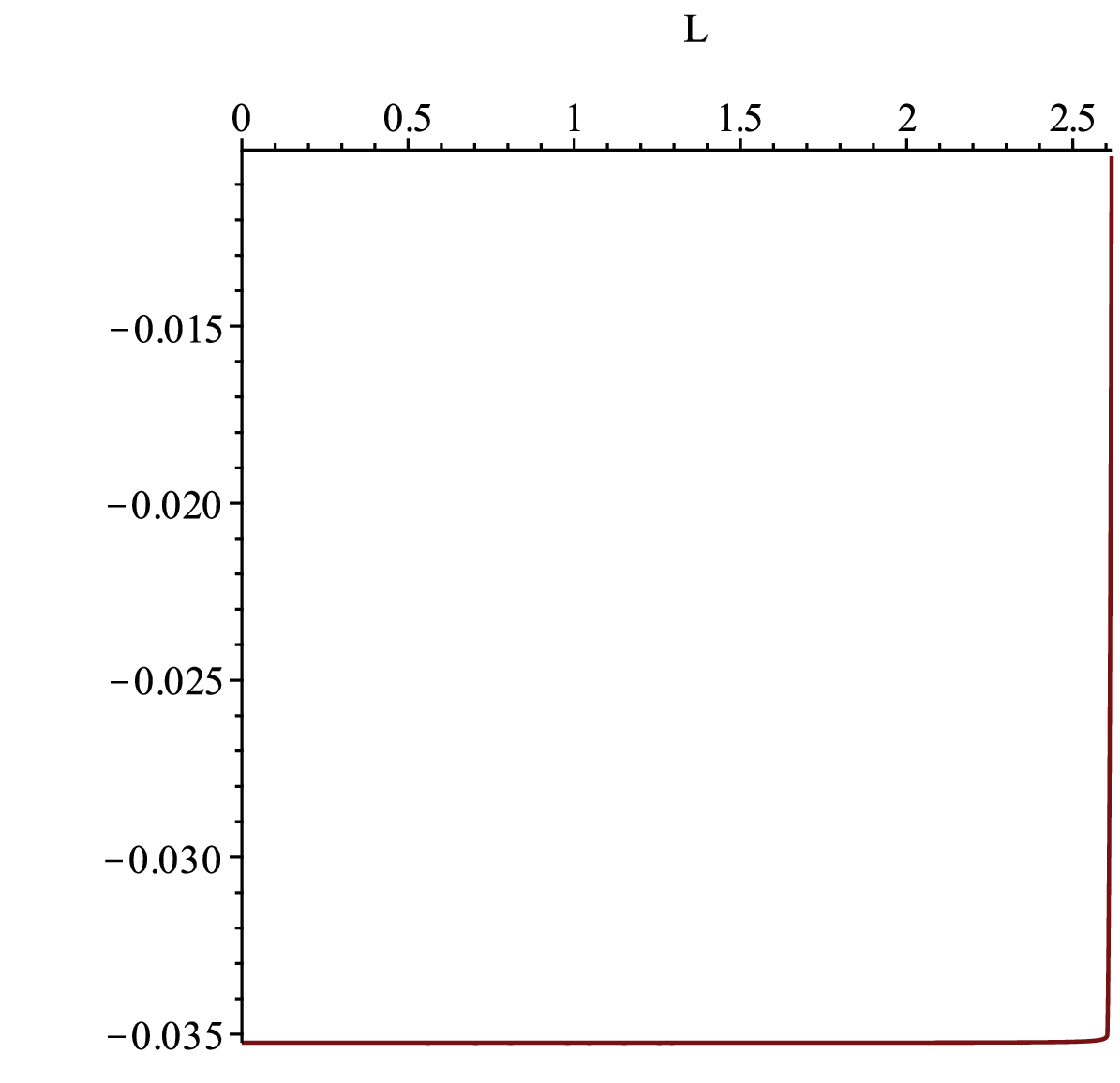}
  \caption{A plot of the function $L(\phi; \al = 5\pi/6, \xi = 7)$ for $0\le \phi \le\al$. For these values of $\al$ and $\xi$, the value of $\beta$ is approximately $\be \approx 2.604<\al\approx 2.618$. On the interval $(-\be, \be)$ the function $L(\phi)$ is constant and negative, then increases on the interval $(\be, \al)$. Noting that $\be$ is very close to $\al$, we see the function increases very rapidly over the very small interval ($\be$, $\al$). In this case $\xi>\xi_\al\approx 5$, so $L(\phi)$ remains negative on the entire interval $(-\al,\al)$.}
  \label{lphi2}
    \end{minipage}
\end{figure}

\section{Proof of Proposition \ref{subexpgr}}\label{main_tech_proof}
Before proving Proposition \ref{subexpgr} we introduce the quantities
\begin{equation}\label{in4a}
  r_{M,k} := \frac{1}{m} \sum_{z\in L_{\al,m}} z^k p_M(z), \qquad r_{M,k}^* := \frac{1}{m} \sum_{z\in L_{\al,m}}z^k p_M^*(z),
  \end{equation}
  which are related by 
  \begin{equation}\label{in4b}
r_{M,k}^*= \frac{r_{M,k+1} - r_{M+1,k}}{\rho_{M+1}}, \qquad r_{M,k+1}= \frac{r_{M,k}^* - r_{M+1,k}^*}{\rho_{M+1}}, \qquad  r_{M,0}^* = h_M.
\end{equation}

We note that since $z^m=1$ for any $z\in L_{\al,m}$ the above quantities are periodic in $k$ with period $m$. Also as noted earlier there are no orthogonal polynomials with degree greater than $m\alpha/2\pi$. Therefore we shall always assume that $m, M$ and $k$ satisfy the constraints given in the hypotheses of Proposition~\ref{subexpgr} .

A rough bound on the quantities $r_{M,k}$ is obtained from the Cauchy--Schwarz inequality:
  \begin{equation}\label{in4c}
 |r_{M,k}|^2 = \left|\frac{1}{m} \sum_{x\in L_{\al,m}} x^k p_M(x) \right|^2  \le  \left(\frac{1}{m}\sum_{x\in L_{\al,m}} |x|^{2k}\right)\left(\frac{1}{m} \sum_{x\in L_{\al,m}} |p_M(x)|^2\right)=\frac{h_M}{b},
\end{equation}
so we have
  \begin{equation}\label{in4d}
 |r_{M,k}|\le \sqrt{h_M}, \qquad  |r_{M,k}^*|\le \sqrt{h_M}.
 \end{equation}  
A more precise estimate will now be given which leads to the lower bound in Theorem~\ref{lower} 
using  $K_{n,M}$ given by equation~\eqref{chrdaym}.
Using the residue theorem and assuming that $w\in C_\al$ we find
\begin{equation}\label{pa1}
\begin{aligned}
K_{n,M}(w)=&\frac{1}{2\pi i}\oint\frac{z^{n}}{mh_M(z - w)}   \left( p^*_{M}(w)\sum_{x\in \lattice} \frac{p_M(x)}{(z-x)x^{M-1}} - z p_M(w)\sum_{x\in \lattice} \frac{p_M^*(x)}{(z-x)x^{M}} \right)\,dz  \\
& \qquad - \frac{w^{n}}{mh_M}   \left( p^*_{M}(w)\sum_{x\in \lattice} \frac{p_M(x)}{(w-x)x^{M-1}} - w p_M(w)\sum_{x\in \lattice} \frac{p_M^*(x)}{(w-x)x^{M}} \right), \\ 
\end{aligned}
\end{equation}
where the integration is over a contour which encloses $\lattice$, oriented counterclockwise, and the second term is to account for the residue at $z=w$.
The second term on the right hand side of the above equation can be recast as
\begin{align*}
 &-\frac{w^{n}}{mh_M}   \left(p^*_{M}(w)\sum_{x\in \lattice} \frac{p_M(x)}{(w-x)x^{M-1}} - w p_M(w)\sum_{x\in \lattice} \frac{p_M^*(x)}{(w-x)x^{M}}\right)\\&=\frac{w^n}{m}\sum_{x\in \lattice}\frac{1}{h_M} \frac{p_M^*(w)\overline{p^*_M(x)} - w\bar x p_M(w)\overline{p_M(x)}}{(1-\bar x w)}\\&=\frac{w^n}{m}\sum_{x\in \lattice}  \sum_{k=0}^{M} \frac{1}{h_{k}} \overline{p_k(x)} p_k(w)=w^n.
\end{align*}
\begin{rem} There are many ways one could write $K_{n,M}(w)$ as a contour integral. We choose the form of the integrand in \eqref{pa1} in part because the sums in the integrand are the discrete Cauchy transforms of $p_M$ and $p_M^*$ which appear in the second column of the Riemann--Hilbert problem solution \eqref{IP4}. Asymptotic formulas for these quantities as $M\to\infty$ follow from the steepest descent analysis presented in Section \ref{RH_analysis}. In principle then, the integral in \eqref{pa1} could be evaluated asymptotically by replacing the Cauchy transforms in the integrand with their asymptotic expressions, and performing classical steepest descent analysis. This approach should give asymptotic formulas for $K_{n,M}(w)$ in the regime $n = aM$ for $a>1$. In the current paper we do not pursue this approach, and instead derive estimates on $K_{n,M}(w)$ for $n = M+k$ for fixed $k\in \N$ as $M\to\infty$.
\end{rem}

Let us evaluate the integral in \eqref{pa1} by computing its residue at infinity. Denote the integrand of that integral by $H(z)$ so that the first term in \eqref{pa1} is
\begin{equation}
\frac{1}{2\pi i}\oint H(z)\,dz.
\end{equation}
 Expanding $H(z)$ at $z=\infty$ gives
\begin{equation}\label{pa2}
\begin{aligned}
&H(z)=\frac{z^{n-2}}{ mh_M}\left(\sum_{k=0}^\infty \left(\frac{w}{z}\right)^k\right) \left( p^*_{M}(w)\sum_{k=0}^\infty \sum_{x\in \lattice} \left(\left(\frac{x}{z}\right)^k\frac{p_M(x)}{x^{M-1}}\right) - z p_M(w)\sum_{k=0}^\infty \sum_{x\in \lattice} \left(\left(\frac{x}{z}\right)^k\frac{p_M^*(x)}{x^{M}}\right) \right) \\
&=\frac{z^{n-2}}{ mh_M}\left(\sum_{k=0}^\infty \left(\frac{w}{z}\right)^k\right) \left( p^*_{M}(w)\sum_{k=0}^\infty \sum_{x\in \lattice} \frac{x^{k+1} p_M(x)}{z^{M+k}} - z p_M(w)\sum_{k=0}^\infty \sum_{x\in \lattice} \frac{x^kp_M^*(x)}{z^{M+k-1}}\right) \\
&=z^{n-2}\left(\sum_{k=0}^\infty \left(\frac{w}{z}\right)^k\right) \left(-\frac{p_M(w)}{z^{M-1}}+\sum_{k=1}^\infty \frac{p_M^*(w)r_{M,k}-p_M(w)r_{M,k}^*}{z^{M+k-1}h_M}\right). \\
\end{aligned}
\end{equation}
The residue of $H(z)$ at $z=\infty$ is the residue of $-z^{-2} H(z^{-1})$ at $z=0$. Making this change of variables we find as $z\to 0$,
\begin{equation}
-\frac{1}{z^2} H(z^{-1}) = z^{M-n-1} \left(\sum_{k=0}^\infty \left(wz\right)^k\right)\left(p_M(w)-\sum_{k=1}^\infty \left[\frac{p_M^*(w)r_{M,k}-p_M(w)r_{M,k}^*}{h_M}\right]z^{k}\right).
\end{equation}
If $n=M$, then the residue of this function at $z=0$ is exactly $p_M(w)$. Generally for $n>M$ we have
\begin{align}\label{wknm}
K_{n,M}(w)&=w^n+\underset{z=0}\Res\left(-\frac{1}{z^2} H(z^{-1})\right)\nonumber\\&= w^n+ w^{n-M}p_M(w)-\sum_{k=1}^{n-M} \left[\frac{p_M^*(w)r_{M,k}-p_M(w)r_{M,k}^*}{h_M}\right]w^{n-M-k}.
\end{align}
 
From the definition of $r^*_{M,k}$ we see that
\begin{equation}\label{rstar}
r^*_{M,k}=\frac{1}{m}\sum_{x\in L_{\alpha,m}} x^{k+M}\overline{p_M(x)}.
\end{equation}
The expansion
$$
x^l=\sum_{j=0}^{l} x_{l,j} p_j(x),
$$
leads to
\begin{equation}\label{rstarn}
r^*_{M,k}=x_{M+k,M} h_M,
\end{equation}
where $x_{M,M}=1$ since $p_M(x)$ is monic. Write
$$
p_M(x)=\sum_{l=0}^M p_{M,j} x^j,
$$
with $p_{M,M}=1$ and
substitute this into the equation for $x^M$. Then the 
 coefficient of $x^{i}$ is,
\begin{equation}\label{integ}
x_{M,i}=-\sum_{j=i+1}^{M} x_{M,j} p_{j,i}.
\end{equation}
The recurrence formulas (equation~\eqref{in3a}) for $p_M$ and $p^*_M$ and the reality of the coefficients in $p_M$ gives
\begin{equation}\label{pnzone}
p_{M+1,j}=p_{M,j-1}-\rho_{M+1} p_{M,M-j}
\end{equation}
and
\begin{equation}\label{pnztwo}
p_{M+1,M+1-j}=p_{M,M-j}-\rho_{M+1} p_{M,j-1},
\end{equation}
where $p_{M,0}=-\rho_M$.
For $j=M$ equation~\eqref{pnzone} gives
$$
p_{M+1,M}=p_{M,M-1}+\rho_M\rho_{M+1},
$$
so
\begin{equation}\label{pnn1}
p_{M+1,M}=-\rho_1+\sum_{i=1}^M\rho_{i+1}\rho_{i}.
\end{equation}
For $i=M-1$, $M-2$ in equation~\eqref{integ} and we find
$$
x_{M,M-1}= -p_{M,M-1},
$$
$$
x_{M,M-2}=-p_{M,M-2}-x_{M,M-1}p_{M-1,M-2}.
$$
Thus 
\begin{equation}\label{xn1n}
x_{M+1,M}=\rho_1-\sum_{i=1}^M\rho_{i+1}\rho_{i}.
\end{equation}
With $M\to M+1$ and $j= M$ in equation~\eqref{pnzone} and $M\to M-1$ with $j=M-1$ in \eqref{pnztwo} 
we have
$$
p_{M+2,M}=p_{M+1,M-1}-\rho_{M+2} p_{M+1,1}=-\rho_2-\sum_{i=2}^{M+1}\rho_{i+1} p_{i,1},
$$
and
$$
p_{M,1}=p_{M-1,0}-\rho_{M} p_{M-1,M-2}=-\rho_{M-1}+\rho_{M}\rho_1-\sum_{j=1}^{M-2}\rho_{j+1}\rho_{j}.
$$
Consequently
\begin{align*}
p_{M+2,M}=&-\rho_2+\sum_{i=2}^{M+1} \rho_{i+1}\rho_{i-1}-\rho_1\sum_{i=2}^{M+1}\rho_i\rho_{i+1}+\sum_{i=3}^{M+1} \rho_i\rho_{i+1}\sum_{j=1}^{i-2}\rho_j\rho_{j+1},
\end{align*}
and
\begin{align*}
x_{M+2,M}&=-\left(-\rho_{2}+\sum_{i=2}^{M+1} \rho_{i-1}\rho_{i+1}-\rho_{1}\sum_{i=2}^{M+1}\rho_i\rho_{i+1}+\sum_{i=3}^{M+1} \rho_i\rho_{i+1}\sum_{j=1}^{i-2}\rho_j\rho_{j+1}\right)\\&-\left(\rho_1-\sum_{i=1}^{M+1}\rho_i\rho_{i+1})(-\rho_1+\sum_{i=1}^{M} \rho_i\rho_{i+1}\right)\\&=\rho_1^2+\rho_2(1-\rho_1^2)-\sum_{i=2}^{M+1} \rho_{i-1}\rho_{i+1}-\rho_1(1-\rho_2)\sum_{i=1}^M\rho_i\rho_{i+1}+\sum_{i=2}^{M+1} \rho_i\rho_{i+1}\sum_{j=i-1}^{M}\rho_j\rho_{j+1}.
\end{align*}
We now prove the following lemma.
\begin{lem}\label{lemc}
There exists and $i_0$ and a $c>0$ so that 
$$c\le -\rho_i\rho_{i+1}$$
for all $i\ge i_0$. Thus
for $k_1\ge i_0$ and $k_1\ge x_0$
$$
c\frac{(k_2-x_0)^{k+1}-(k_1-x_0)^{k+1}}{k+1}\le-\sum_{k_1}^{k_2}\rho_i\rho_{i+1}(i-x_0)^k .
$$
Also for $k_1\ge x_0$,
$$
\sum_{k_1}^{k_2}|\rho_i\rho_{i+1}|(i-x_0)^k\le\frac{(k_2+1-x_0)^{k+1}-(k_1-x_0)^{k+1}}{k+1} .
$$
\end{lem}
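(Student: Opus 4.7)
The plan is to derive both statements from the asymptotic formula for the Szeg\H{o} parameters, combined with standard integral comparison.

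First, from \eqref{mr9} of Proposition \ref{rho_M} we have
\[
\rho_i = (-1)^i \cos(\beta/2)\bigl(1 + \bigO(i^{-1})\bigr),
\]
so that
\[
-\rho_i \rho_{i+1} = (-1)^{2i+2}\cos^2(\beta/2)\bigl(1+\bigO(i^{-1})\bigr) = \cos^2(\beta/2)\bigl(1+\bigO(i^{-1})\bigr).
\]
Since $\beta\in (0,\al)\subset (0,\pi)$ we have $\cos^2(\beta/2)>0$, so the $\bigO(i^{-1})$ correction can be absorbed for $i$ sufficiently large. Choose $i_0$ large enough that the correction is at most $\tfrac12$ in absolute value for $i\ge i_0$, and set $c:=\tfrac12\cos^2(\beta/2)>0$. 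This yields $-\rho_i\rho_{i+1}\ge c$ for all $i\ge i_0$, establishing the first assertion.

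For the lower bound on the alternating sum, if $k_1\ge i_0$ then term-by-term we have $-\rho_i\rho_{i+1}(i-x_0)^k\ge c(i-x_0)^k$ (the factor $(i-x_0)^k$ is nonnegative because $k_1\ge x_0$), so
\[
-\sum_{i=k_1}^{k_2}\rho_i\rho_{i+1}(i-x_0)^k \ge c\sum_{i=k_1}^{k_2}(i-x_0)^k.
\]
Since the function $x\mapsto (x-x_0)^k$ is increasing on $[x_0,\infty)$, the right Riemann sum bounds the integral from below:
\[
\sum_{i=k_1}^{k_2}(i-x_0)^k \ge \sum_{i=k_1}^{k_2-1}\int_{i}^{i+1}(x-x_0)^k\,dx = \int_{k_1}^{k_2}(x-x_0)^k\,dx = \frac{(k_2-x_0)^{k+1}-(k_1-x_0)^{k+1}}{k+1},
\]
which yields the claimed lower bound.

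For the upper bound we use the general property $|\rho_i|<1$ valid for all Szeg\H o parameters of orthogonal polynomials on the unit circle (see e.g.\ \cite{Simon-OPUC-1}), hence $|\rho_i\rho_{i+1}|\le 1$ for every $i$. Then for $k_1\ge x_0$, monotonicity of $(x-x_0)^k$ gives the opposite integral comparison:
\[
\sum_{i=k_1}^{k_2}|\rho_i\rho_{i+1}|(i-x_0)^k \le \sum_{i=k_1}^{k_2}(i-x_0)^k \le \sum_{i=k_1}^{k_2}\int_i^{i+1}(x-x_0)^k\,dx = \frac{(k_2+1-x_0)^{k+1}-(k_1-x_0)^{k+1}}{k+1}.
\]
This completes the proof sketch. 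There is no substantive obstacle: once \eqref{mr9} is granted, the only mild care required is that the hypothesis $k_1\ge i_0$ is used for the lower bound (to ensure $-\rho_i\rho_{i+1}\ge c$) while the upper bound needs only the universal Szeg\H o bound $|\rho_i|<1$ and the hypothesis $k_1\ge x_0$ so that the integrand is monotone on the range of summation.
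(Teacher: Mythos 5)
Your proof is correct and follows the same route as the paper: the paper derives the positivity of $-\rho_i\rho_{i+1}$ for large $i$ from the Riemann--Hilbert asymptotics of Proposition \ref{rho_M} (citing \eqref{mr8}, from which \eqref{mr9} follows by setting $z=0$), noting that $0<\beta<\pi$ ensures $\cos(\beta/2)\ne 0$, and then appeals to the integral test for positive increasing functions together with $|\rho_i|<1$. One small caveat: your sentence ``the right Riemann sum bounds the integral from below'' is stated backwards (for an increasing integrand, the right Riemann sum bounds the integral from \emph{above}, which is precisely what gives $\sum_{i=k_1}^{k_2}(i-x_0)^k \ge \int_{k_1}^{k_2}(x-x_0)^k\,dx$); the displayed chain of inequalities is nevertheless correct as written.
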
\label{lem1}
\begin{proof}
The first inequality follows from equation\eqref{mr8}, since $0<\beta<\pi$, the poof of which  is independent of the above calculations. The inequalities for the sums are a consequence of the integral test applied to positive increasing functions. The second inequality also uses the fact that $|\rho_i|<1$ for all $i>0$.
\end{proof}
This allows

\begin{lem}\label{lemp} For $1\le k\le M$  
\begin{equation}\label{pnk}
p_{M+k,M}=p^1_{M+k,M}+p^2_{M+k,M},
\end{equation}
where
\begin{equation}\label{pnk2}
p^2_{M+k,M}=\sum_{i_1=2k-1}^{M+k-1}\rho_{i_1}\rho_{i_1+1}\sum_{i_2=2k-3}^{ i_1-2}\rho_{i_2}\rho_{i_2+1}\cdots\sum_{i_k=1}^{i_{k-1}-2}\rho_{i_k}\rho_{i_k+1},
\end{equation}
and for fixed $k$ there is a positive constant $e_k$ independent of $M$ such that              
\begin{equation}\label{pnk1}
|p^1_{M+k,M}|\le e_k M^{k-1}.
\end{equation}
Furthermore for $M\ge k$
\begin{equation}\label{uppp2}
|p^2_{M+k,M}|\le \frac{M^{k}}{k!}.
\end{equation}
\end{lem}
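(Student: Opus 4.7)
The proof proceeds by induction on $k$. For $k=1$, equation \eqref{pnn1} provides the decomposition $p_{M+1,M}=-\rho_1+\sum_{i=1}^M\rho_{i+1}\rho_i$; setting $p^1_{M+1,M}=-\rho_1$ and $p^2_{M+1,M}=\sum_{i=1}^M\rho_{i+1}\rho_i$ yields $|p^1_{M+1,M}|\le 1$, and, using $|\rho_i|<1$, $|p^2_{M+1,M}|\le M=M^1/1!$, so \eqref{pnk1} and \eqref{uppp2} hold with $e_1=1$.

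For the inductive step, specialize \eqref{pnzone} with $M\to M+k-1$ and $j=M$ to get $p_{M+k,M}=p_{M+k-1,M-1}-\rho_{M+k}p_{M+k-1,k-1}$, and telescope in $M$ to obtain
\begin{equation*}
p_{M+k,M}=p_{2k,k}-\sum_{j=k+1}^{M}\rho_{j+k}\,v_k(j),\qquad v_k(j):=p_{j+k-1,k-1}.
\end{equation*}
The coefficient $v_k(j)$ is handled via the conjugate identity $p_{M,l}=p^*_{M,M-l}$, which gives $v_k(j)=p^*_{j+k-1,j}$, together with the dual recurrence $p^*_{M+1,l}=p^*_{M,l}+\rho_{M+1}p_{M,l-1}$ implied by \eqref{in3a}. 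Iterating this in $k$, starting from $v_1(j)=p_{j,0}=-\rho_j$, yields
\begin{equation*}
v_k(j)=-\rho_j+\sum_{m=2}^{k}\rho_{j+m-1}\,u_{m-1}(j-1),\qquad u_m(M'):=p_{M'+m,M'}.
\end{equation*}
Substituting back into the telescoped formula and applying the inductive decomposition $u_{m-1}=p^1+p^2$ to each $u_{m-1}(j-1)$ expresses $p_{M+k,M}$ as a finite sum of contributions labelled by $m\in\{1,\ldots,k\}$ and by type ($p^1$ or $p^2$).

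The crux is the self-similar identity
\begin{equation*}
p^2_{M+k,M}=\sum_{i_1=2k-1}^{M+k-1}\rho_{i_1}\rho_{i_1+1}\,p^2_{i_1-1,\,i_1-k},
\end{equation*}
which is verified directly from \eqref{pnk2} by recognizing the inner $(k-1)$-fold nested sum as $p^2_{M'+(k-1),M'}$ with $M'=i_1-k$. After the change of index $i_1=j+k-1$, the $m=k$, $p^2$-type contribution from the substitution above matches this identity exactly, apart from the boundary contribution at $i_1=2k-1$ (i.e., $j=k$), which sits inside the $M$-independent constant $p_{2k,k}$. All remaining terms -- those with $m<k$, the $p^1$ pieces of $u_{m-1}(j-1)$ for $m=k$, and the residual part of $p_{2k,k}$ -- are collected into $p^1_{M+k,M}$.

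The bound \eqref{uppp2} then follows from the self-similar identity: using $|\rho_{i_1}\rho_{i_1+1}|\le 1$ and the inductive estimate $|p^2_{i_1-1,i_1-k}|\le(i_1-k)^{k-1}/(k-1)!$, the integral-test bound from Lemma \ref{lemc} gives $|p^2_{M+k,M}|\le \frac{1}{(k-1)!}\sum_{i_1=2k-1}^{M+k-1}(i_1-k)^{k-1}\le M^k/k!$. For \eqref{pnk1}, each cross-term $\sum_j\rho_{j+k}\rho_{j+m-1}u_{m-1}(j-1)$ with $m<k$ (or with $m=k$ paired with the $p^1$ piece of $u_{k-1}$) is estimated via $|u_{m-1}(j-1)|=\bigO(j^{m-1})$ and Lemma \ref{lemc} to be $\bigO(M^{m-1})\le \bigO(M^{k-1})$; combining the finitely many such contributions with the bounded constant $p_{2k,k}$ yields $|p^1_{M+k,M}|\le e_kM^{k-1}$ for a suitable $e_k$. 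The main obstacle is the combinatorial bookkeeping needed to verify the self-similar identity and to confirm that exactly the $m=k$, $p^2$-part contribution from the double recursion reproduces $p^2_{M+k,M}$, with every boundary term absorbed correctly into $p^1_{M+k,M}$.
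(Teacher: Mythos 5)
Your proof follows essentially the same strategy as the paper's, and is correct modulo two small slips. Both arguments begin by telescoping the Szeg\H o recursion \eqref{pnzone} in the degree index to get $p_{M+k,M}$ as a constant plus a single sum $-\sum_j \rho_{j+k}\,p_{j+k-1,k-1}$, identify the ``double-$\rho$'' contribution that satisfies the self-similar recursion $p^2_{M+k,M}=\sum_{i}\rho_i\rho_{i+1}p^2_{i-1,i-k}$ (equivalent to \eqref{pnk2}), collect everything else into $p^1$, and then invoke Lemma~\ref{lemc}/the integral test and $|\rho_i|<1$ for the bounds. Where the paper applies \eqref{pnzone} once more to $p_{i,k-1}$ and then iterates the bound on the resulting ``drop in $k$'' piece, you instead unroll $v_k(j)=p_{j+k-1,k-1}$ completely via the reversed-coefficient identity and the dual recurrence into a sum over $m$; this is a different bookkeeping of the same recursion and yields the same estimates, at the cost of tracking more explicit cross-terms.

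Two minor corrections. First, the dual recurrence implied by \eqref{in3a} has a minus sign: $p^*_{M+1,l}=p^*_{M,l}-\rho_{M+1}p_{M,l-1}$, so $v_k(j)=-\rho_j-\sum_{m=2}^{k}\rho_{j+m-1}u_{m-1}(j-1)$; the sign is immaterial to the absolute-value bounds but should be stated correctly. Second, from $|u_{m-1}(j-1)|=\bigO(j^{m-1})$ the sum $\sum_{j\le M}|\rho_{j+k}\rho_{j+m-1}u_{m-1}(j-1)|$ is $\bigO(M^{m})$, not $\bigO(M^{m-1})$; since these terms arise only for $m\le k-1$ this is still $\bigO(M^{k-1})$, so the conclusion \eqref{pnk1} stands, but the intermediate exponent as written is off by one.
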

\begin{proof} The above examples show that the result is true for $k=1$ and $k=2$. From equation~\eqref{pnzone} we find
\begin{equation}\label{pnkn}
p_{M+k,M}=-\rho_{k}-\sum_{i=k}^{M+k-1}\rho_{i+1} p_{i,k-1},
\end{equation}
where the fact that $p_{k,0}=-\rho_k$ has been used to obtain the
last equation. Equation~\eqref{pnztwo} now gives 
\begin{equation}\label{pnknn}
p_{M+k,M}=p^1_{M+k,M}+p^2_{M+k,M}=-\rho_k-\sum_{i=k}^{M+k-1}\rho_{i+1}
p_{i-1,k-2}+\sum_{i=k}^{M+k-1}\rho_i\rho_{i+1} p_{i-1,i-k},
\end{equation}
so with $p_{i-1,i-k}=p^1_{i-1,i-k}+p^2_{i-1,i-k}$ we find that
$$
p^2_{M+k,M}=\sum_{i=k}^{M+k-1}\rho_i\rho_{i+1} p^2_{i-1,i-k}=\sum_{i=0}^{M-1}\rho_{i+k}\rho_{i+k+1} p^2_{i+k-1,i}
$$
and equation~\eqref{pnk2} now follows from the induction hypothesis. 
Since $\sum_{i_k=1}^{i_{k-1}-2}=i_{k-1}-2$, the upper bound on $|p^2_{M+k,M}|$ follows from the last assertion of Lemma~\ref{lemc} and the fact that $|\rho_i|<1$. 
To obtain the upper bound on $p^1_{m+k,M}$ we find from equation~\eqref{pnknn}
\begin{equation}\label{pnkk}
p^1_{M+k,M}=-\rho_k-\sum_{i=k}^{M+k-1}\rho_{i+1}
p_{i-1,k-2}+\sum_{i=0}^{M-1}\rho_{i+k}\rho_{i+k+1} p^1_{i+k-1,i}.
\end{equation}
By induction
\begin{equation}\label{ind1}
\left|\sum_{i=0}^{M-1}\rho_{i+k}\rho_{i+k+1}
p^1_{i+k-1,i}\right|<\sum_{i=0}^{M-1}e_{k-2}i^{k-2}\le e_{k-2}\frac{M^{k-1}}{k-1}
\end{equation}
The use of equation~\eqref{pnzone} in the second sum yields  
\begin{align*}
\left|\sum_{i=k}^{M+k-1}\rho_{i+1}
 p_{i-1,k-2}\right|&\le\left|\sum_{i=k}^{M+k-1}\rho_{i+1} p_{i-2,k-3}\right|+\left|\sum_{i=k}^{M+k-1}\rho_{i+1}\rho_{i-1}p_{i+k-2,i}\right|\\&\le\left|\sum_{i=k}^{M+k-1}\rho_{i+1} p_{i-2,k-3}\right|+\sum_{i=k}^{M+k-1}e_{k-2}i^{k-2}\\&\le\left|\sum_{i=k}^{M+k-1}\rho_{i+1} p_{i-2,k-3}\right|+e_{k-2}\frac{M^{k-1}}{k-1},
\end{align*}
where equation~\eqref{ind1} and the bound proved for $p^2_{i+k-2,i}$ have been used to obtain the second inequality. Repeated use  $k-2$ times of  equation~\eqref{pnzone} in the sum on the last line of the  above
equation shows that it is $\bigO(M^{k-2})$. 
Thus
$$
\left|p^1_{M+k,M}\right|\le e_k M^{k-1}.
$$


\end{proof}
\begin{lem}\label{lempx} For $1\le k\le M$ 
\begin{equation}\label{xnk}
x_{M+k,M}=x^1_{M+k,M}+x^2_{M+k,M},
\end{equation}
where
\begin{equation}\label{xnk2}
x^2_{M+k,k}=(-1)^k\sum_{i_1=k}^{M+k-1}\rho_{i_1}\rho_{i_1+1}\sum_{i_2=i_1-1}^{M+k-2}\rho_{i_2}\rho_{i_2+1}\cdots\sum_{i_k=i_{k-1}-1}^{M}\rho_{i_k}\rho_{i_k+1}.
\end{equation}
For $M>i_0$
\begin{equation}\label{ux2}
\frac{(M+k-1)^{k}}{k!}\ge\left|x^2_{M+k,M}\right|\ge c^k\frac{(M-i_0+1)^k}{k!}-\frac{1}{k!}((M+k-1)^k-(M-i_o+k)^k)
\end{equation}
For fixed $k$ there is a  positive constant $t_k$ independent of $M$ such that,
\begin{equation}\label{xnk1}
\left|x^1_{M+k,M}\right|\le t_k M^{k-1}.
\end{equation}
\end{lem}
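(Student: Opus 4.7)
The plan is to prove Lemma \ref{lempx} by induction on $k$, paralleling the structure of the proof of Lemma \ref{lemp}. The base cases $k=1$ and $k=2$ are already established in the excerpt by the explicit formula \eqref{xn1n} and by the computation of $x_{M+2,M}$, where the final iterated-sum term matches the claimed form of $x^2_{M+k,M}$ and the remaining terms constitute $x^1_{M+k,M}$ and are visibly bounded by $t_k M^{k-1}$.

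For the inductive step, I would begin with the defining recursion \eqref{integ}:
$$x_{M+k,M} = -p_{M+k,M} - \sum_{j=M+1}^{M+k-1} x_{M+k,j}\, p_{j,M},$$
using $x_{M+k,M+k}=1$. Substituting the decomposition $p_{j,M} = p^1_{j,M} + p^2_{j,M}$ from Lemma \ref{lemp}, and using \eqref{pnzone} together with the inductive expressions for $x_{M+k,j}$ (which have the same shape as $x_{M+k,M}$ with shifted indices), the leading contribution consisting of iterated sums of products $(-1)^k\rho_{i_1}\rho_{i_1+1}\cdots\rho_{i_k}\rho_{i_k+1}$ with the nested ranges appearing in \eqref{xnk2} would be isolated as $x^2_{M+k,M}$. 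All remaining contributions lose one $\rho$-factor per step and hence one order of $M$, so they can be collected into $x^1_{M+k,M}$. The bound $|x^1_{M+k,M}|\le t_k M^{k-1}$ then follows by iterating $k-1$ times as in the derivation of \eqref{pnk1}, using the induction hypothesis, the analogous bound for $p^1$, and the second inequality of Lemma \ref{lemc}.

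For the two-sided bound \eqref{ux2} on $|x^2_{M+k,M}|$, the upper bound follows from $|\rho_i\rho_{i+1}|\le 1$ and successive applications of the integral-test estimate in the second inequality of Lemma \ref{lemc}, which telescopes the $k$ nested sums into $(M+k-1)^k/k!$. For the lower bound, the key observation is that Lemma \ref{lemc} guarantees $-\rho_i\rho_{i+1}\ge c>0$ for all $i\ge i_0$; combined with the prefactor $(-1)^k$ in \eqref{xnk2}, each term is a product of $k$ positive quantities whenever all $i_j\ge i_0$. Restricting all $k$ sums to this regime and applying the first inequality of Lemma \ref{lemc} $k$ times yields a contribution of at least $c^k (M-i_0+1)^k/k!$. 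The portion of the sum involving small indices $i_j<i_0$ is bounded above by the difference $((M+k-1)^k - (M-i_0+k)^k)/k!$ via the upper-bound argument, and subtracting gives exactly the claimed inequality.

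The main obstacle will be the careful combinatorial bookkeeping in the induction step: the nested sum \eqref{xnk2} has inner indices starting at $i_{j-1}-1$ (one less than the previous) and running \emph{upward} to $M+k-j$, in contrast to \eqref{pnk2}, whose inner indices run \emph{downward} to $1$. This inverted pattern reflects the asymmetry between \eqref{pnzone} and \eqref{pnztwo} and the way the $x$-recursion \eqref{integ} couples to the $p$-recursion (summing over indices \emph{above} $M$ rather than below). Verifying that the induction produces precisely these ranges, and not variants such as $[i_{j-1}+1, M+k-j-1]$, is the delicate point; once it is in place, all analytic estimates follow from Lemma \ref{lemc} together with the elementary inequality $|\rho_j|<1$.
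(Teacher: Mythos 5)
Your proposal is correct and follows essentially the same route as the paper: the same use of recursion \eqref{integ} together with the decomposition from Lemma \ref{lemp}, an inductive isolation of the iterated-sum term $x^2_{M+k,M}$ by pairing $p^2_{M+k-j,M}$ against $x^2_{M+k,M+k-j}$ to ``flip'' one nested sum range per step, and the same split into a dominant all-large-index piece bounded below via Lemma \ref{lemc} together with a remainder piece bounded above by the integral test. The only cosmetic difference is that the paper's splitting for the lower bound is implemented by restricting only the outermost index to $i_1\ge i_0+k-1$ (which forces $i_j\ge i_0$ automatically, leaving the inner ranges untouched), rather than restricting each index separately as you suggest; the former is cleaner but both yield the bound $c^k(M-i_0+1)^k/k!$.
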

\begin{proof}
Equation~\eqref{xn1n} gives the result for $k=1$. For general $k$ equation~\eqref{integ} is
\begin{equation}\label{xnkn}
x_{M+k,M}=-p_{M+k,M}-x_{M+k,M+k-1}p_{M+k-1,M}\cdots-x_{M+k,M+1}p_{M+1,M}.
\end{equation}
With the use of Lemma~\ref{lemp} we have
\begin{align}\label{xnk11}
&p_{M+k,M}+x_{M+k,M+k-1}p_{M+k-1,M}\nonumber\\&=\left(p^1_{M+k,M}+p^2_{M+k,M}\right)+(x^1_{M+k,M+k-1}+x^2_{M+k-1,M+k-1})\left(p^1_{M+k-1,M}+p^2_{M+k-1,M}\right).
\end{align}
Extracting the part that only contains $p^2$ yields
\begin{align*}
&p^2_{M+k,M}+p^2_{M+k-1,M}x^2_{M+k,M+k-1}\\&=\sum_{i_1=k}^{M+k-1}\rho_{i_1}\rho_{i_1+1}\sum_{i_2=k-1}^{ i_1-2}\rho_{i_2}\rho_{i_2+1}\sum_{i_3=k-2}^{ i_2-2}\rho_{i_3}\rho_{i_3+1}\cdots\sum_{i_k=1}^{i_{k-1}-2}\rho_{i_k}\rho_{i_k+1}\\&-\sum_{i_1=1}^{M+k-1}\rho_{i_1}\rho_{i_1+1}\left(\sum_{i_2=k-1}^{ M+k-2}\rho_{i_2}\rho_{i_2+1}\sum_{i_3=k-2}^{ i_2-2}\rho_{i_3}\rho_{i_3+1}\cdots\sum_{i_k=1}^{i_{k-1}-2}\rho_{i_k}\rho_{i_k+1}\right).
\end{align*}
Extending the first sum in the above equation to $i_1=1$ yields
$$
p^2_{M+k,M}+x^2_{M+k-1,M+k-1}p^2_{M+k-1,M}=-\sum_{i_1=1}^{M+k-1}\rho_{i_1}\rho_{i_1+1}\sum_{i_2=i_1-1}^{
  M+k-2}\rho_{i_2}\rho_{i_2+1}\cdots\sum_{i_k=1}^{i_{k-1}-2}\rho_{i_k}\rho_{i_k+1}.
$$
Thus
\begin{align*}
&p^2_{M+k,M}+x^2_{M+k-1,M+k-1}p^2_{M+k-1,M}+x^2_{M+k-1,M+k-2}p^2_{M+k-2,M}\\&=-\sum_{i_1=1}^{M+k-1}\rho_{i_1}\rho_{i_1+1}\sum_{i_2=i_1-1}^{M+k-2}\rho_{i_2}\rho_{i_2+1}\sum_{i_3=k-2}^{ i_2-2}\rho_{i_2}\rho_{i_2+1}\cdots\sum_{i_k=1}^{i_{k-1}-2}\rho_{i_k}\rho_{i_k+1}\\&+\sum_{i_1=1}^{M+k-1}\rho_{i_1}\rho_{i_1+1}\sum_{i_2=i_1-1}^{ M+k-2}\rho_{i_2}\rho_{i_2+1}\sum_{i_3=k-2}^{M-k-3}\rho_{i_3}\rho_{i_3+1}\cdots\sum_{i_k=1}^{i_{k-1}-2}\rho_{i_k}\rho_{i_k+1}\\&=\sum_{i_1=1}^{M+k-1}\rho_{i_1}\rho_{i_1+1}\sum_{i_2=i_1-1}^{ M+k-2}\rho_{i_2}\rho_{i_2+1}\sum_{i_3=i_2-1}^{M-k-3}\rho_{i_3}\rho_{i_3+1}\cdots\sum_{i_k=1}^{i_{k-1}-2}\rho_{i_k}\rho_{i_k+1}.
\end{align*}
Continuing on gives equation~\eqref{xnk2} once the empty sums have been removed. Since $\sum_{i_k=i_{k-1}-1}^M=M-i_{k-1}+2$ the upper bound on $x^2_{M+k,k}$ follows from the integral test and the fact that $|\rho_i|<1$. To obtain the lower bound we write
$$
x^2_{M+k,k}=A_{M,k,i_0}+B_{M,k,i_0},
$$
where with the use of Lemma~\ref{lemc}
\begin{align*}
A_{M,k,i_0}=&\sum_{i_1=i_0+k-1}^{M+k-1}|\rho_{i_1}\rho_{i_1+1}|\sum_{i_2=i_1-1}^{M+k-2}|\rho_{i_2}\rho_{i_2+1}|\cdots\sum_{i_k=i_{k-1}-1}^{M}|\rho_{i_k}\rho_{i_k+1}|\\&\ge c^k\sum_{i_1=i_0+k-1}^{M+k-1}\sum_{i_2=i_1-1}^{M+k-2}\cdots\sum_{i_k=i_{k-1}-1}^{M}\ge c^k\frac{(M-i_0+1)^k}{k!},
\end{align*}
and
$$
B_{M,k,i_0}=-\sum_{i_1=k}^{i_0+k-2}\rho_{i_1}\rho_{i_1+1}\sum_{i_2=i_1-1}^{M+k-2}\rho_{i_2}\rho_{i_2+1}\cdots\sum_{i_k=i_{k-1}-1}^{M}\rho_{i_k}\rho_{i_k+1}.
$$
Since $\sum_{i_k=i_{k-1}-1}^M=M-i_{k-1}+2$ from the integral test  and the fact that $|\rho_i|<1$ we find
$$
|B_{M,k,i_0}|\le\sum_{i_1=k}^{i_0+k-2}\frac{(M-i_i+2k-2)^{k-1}}{(k-1)!}\le\frac{1}{k!}\left((M+k-1)^k-(M-i_0+k)^k\right),
$$
which gives the result.
To prove the last assertion we look at the remaining terms in
equation~\eqref{xnk11} and observe, using induction, Lemma~\ref{lemc}, and the previous upper bound on $|x^2_{M+k,k}|$, that
$$
\left|p^1_{M+k,M}+\left(x^1_{M+k,M+k}+x^2_{M+k-1,M+k-1}\right)p^1_{M+k-1,M}+x^1_{M+k,M+k}p^2_{M+k-1,M}\right|\le \tilde t_k M^{k-1}.
$$ 
Continuing this for $k-2$ steps gives the result.

\end{proof}
We now give the proof of Proposition~\eqref{subexpgr}:
\begin{proof}
With the substitution of $n=M+k$ and 
$r_{M,k}=\frac{r^*_{M,k-1}-r^*_{M+1,k-1}}{\rho_{M+1}}$ in equation~4.8 we find,
$$
\text{Res}_{z=0}(-\frac{G(1/z)}{z^2})=p^*_M(w)\frac{p_M(w)}{p^*_M(w)}\frac{r^*_{M,k}}{h_M}+w^{k}\frac{p_M(w)}{p^*_M(w)}-\sum_{l=1}^{k}\frac{r^*_{M,l-1}-r^*_{M+1,l-1}}{\rho_{M+l}h_M}w^{k-l}-\sum_{l=1}^{k-1}\frac{p_M(w)}{p^*_M(w)}\frac{r^*_{M,l}}{h_M}w^{k-l}.
$$
Equation~\eqref{subexpgr} now follows from Lemma~\ref{lempx}, equation~\eqref{rstarn}, and the fact that
$|\frac{p_M(w)}{p^*_{M}(w)}|=1$ for $w$ on the unit circle. Note we have also used the fact that from equation~\eqref{in3d} $h_{M+1}=(1-\rho_{M+1}^2)h_M$. Equation~\eqref{wknm}, Theorem~\ref{thmasym2} and Lemma~\ref{cl} finish the result.
\end{proof}
%
%


\section{Proof of propositions \ref{EqMeasure}, \ref{prop:lm}, and \ref{g-function}}\label{eq_measure}
In this section we compute the equilibrium measure and related quantities. The calculation of the equilibrium measure is based on its resolvent, 
\begin{equation}\label{lx1}
\om(z)=\int_{-\al}^\al \frac{\rho(\th)d\th}{z-e^{i\th}}\,.
\end{equation}
Notice that $\om(z)=g'(z)$, where the $g$-function $g(z)$ is defined in \eqref{eq20}. We therefore first record some properties of the $g$-function. For $-\al\le \theta\le \al$, the function $\log(z-e^{i\theta})$ the asymptotics as $z\to+\infty$,
\begin{equation}\label{eq23}
\log(z-e^{i\th})=\log z+O(z^{-1}).
\end{equation}
When taking the principal branch of the logarithm $\log(z-e^{i\theta})$ with $z=e^{\phi} \in \T$, it is easy to check that the imaginary part $\arg(e^{i\phi}-e^{i\th})$ is given by
\begin{equation}\label{eq24}
\arg(e^{i\phi}-e^{i\th})=\frac{\phi+\th}{2}+\frac{\pi}{2}\,,\quad \textrm{if}\quad \pi>\phi>\th>-\pi,
\end{equation}
and 
\begin{equation}\label{eq25}
\begin{aligned}
\arg_{\pm}(e^{i\phi}-e^{i\th})&=\frac{\phi+\th}{2}+\frac{\pi}{2}\pm\pi\,,\quad \textrm{if}\quad \pi>\th>\phi>-\pi.
\end{aligned}
\end{equation}
Therefore, for all $\phi,\th\in(-\pi,\pi)$,
\begin{equation}\label{eq26}
\arg_+(e^{i\phi}-e^{i\th})+\arg_-(e^{i\phi}-e^{i\th})=\phi+\th+\pi,
\end{equation}
and hence
\begin{equation}\label{eq27}
\log_+(e^{i\phi}-e^{i\th})+\log_-(e^{i\phi}-e^{i\th})=2\log|e^{i\phi}-e^{i\th}|+i(\phi+\th+\pi)\,.
\end{equation}
Also,
\begin{equation}\label{eq28}
\arg_+(e^{i\phi}-e^{i\th})-\arg_-(e^{i\phi}-e^{i\th})=\left\{
\begin{aligned}
&0,\quad \textrm{if}\quad \pi>\phi>\th>-\pi,\\
&2\pi ,\quad \textrm{if}\quad \pi>\th>\phi>-\pi,
\end{aligned}
\right.
\end{equation}
and hence
\begin{equation}\label{eq29}
\log_+(e^{i\phi}-e^{i\th})-\log_-(e^{i\phi}-e^{i\th})=\left\{
\begin{aligned}
&0,\quad \textrm{if}\quad \pi>\phi>\th>-\pi,\\
&2\pi i ,\quad \textrm{if}\quad \pi>\th>\phi>-\pi.
\end{aligned}
\right.
\end{equation}

The following proposition, presented without proof, collects some of the important analytical properties of the $g$-function.

\begin{prop} The $g$-function has the following properties:
\begin{enumerate}
\item $g$ is analytic on $\C\setminus\big((-\infty,-1]\cup\T \big)$.
\item On $(-\infty,-1)$, $g_+(z)-g_-(z)=2\pi i$.
\item $g(z)=\log z+\bigO(z^{-1})$ as $z\to\infty$.
\item $e^{M g(z)}$ is analytic on $\C\setminus C_\al$.
\item $e^{M g(z)}=z^{M}+O(z^{M-1})$ as $z\to\infty$.
\item $g(0)=\pi i$. 
\item If $z=e^{i\phi}\in\T$, then 
\begin{equation}\label{eq30}
g_+(z)+g_-(z)=2\di \int_{-\al}^\al \log|z-e^{i\th}|d\nu_{\eq}(\th)+i(\phi+\pi).
\end{equation}
\item If $z=e^{i\phi}\in C_{\al}$, then 
\begin{equation}\label{eq31}
G(z):=g_+(z)-g_-(z)=2\pi i\di \int_{\phi}^\al d\nu_{\eq}(\th).
\end{equation}
\end{enumerate}  
\end{prop}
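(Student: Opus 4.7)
The strategy is direct verification of each of the eight statements from the integral definition \eqref{eq20} together with the branch prescription \eqref{eq21}--\eqref{eq22}, the uniform asymptotic \eqref{eq23}, and the boundary-value identities \eqref{eq24}--\eqref{eq29} already in place. Two ingredients used throughout are $\int_{-\al}^\al\rho(\th)\,d\th=1$ and $\rho(\th)=\rho(-\th)$, the latter visible from the explicit formula \eqref{eq35} of Proposition \ref{EqMeasure}. Property (1) follows because $\Gamma_\th\subset(-\infty,-1]\cup\T$ for every $\th\in[-\al,\al]$, so the integrand is jointly continuous in $(z,\th)$ and holomorphic in $z$ on the complement; differentiating under the integral sign gives analyticity. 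Property (3) then comes from \eqref{eq23} and $\int\rho=1$.

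Properties (2), (7), and (8) are obtained by integrating the recorded boundary identities against $\rho(\th)\,d\th$. Across $(-\infty,-1)\subset\Gamma_\th$ every $\log(z-e^{i\th})$ jumps by $2\pi i$, giving Property (2). For $z=e^{i\phi}\in\T$, integrating \eqref{eq27} produces
\begin{equation*}
g_+(z)+g_-(z)=2\int_{-\al}^\al\log|z-e^{i\th}|\,\rho(\th)\,d\th+i\phi+i\pi+i\int_{-\al}^\al\th\rho(\th)\,d\th,
\end{equation*}
whose last term vanishes by evenness of $\rho$, yielding Property (7). For $z=e^{i\phi}\in C_\al$, the jump identity \eqref{eq29} gives $\log_+-\log_-=2\pi i$ when $\th>\phi$ and $0$ when $\th<\phi$, so $g_+-g_-=2\pi i\int_\phi^\al\rho(\th)\,d\th$, which is Property (8).

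For Property (4), one verifies that the jumps of $g$ on each component of $\bigl((-\infty,-1]\cup\T\bigr)\setminus C_\al$ lie in $2\pi i\Z$, and are therefore killed by multiplying by $M\in\N$ and exponentiating. On the arc $\{e^{i\phi}:\phi\in(-\pi,-\al)\}$, every $\th\in[-\al,\al]$ satisfies $\th>\phi$, so \eqref{eq29} gives a uniform jump of $2\pi i$; on $\{e^{i\phi}:\phi\in(\al,\pi)\}$, every $\th<\phi$ and the jump is $0$; and on $(-\infty,-1)$ the jump is $2\pi i$ by Property (2). Continuity across these arcs then promotes to analyticity by Morera's theorem. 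Since the jump across $C_\al$ in Property (8) is not in $2\pi i\Z$ for generic $\phi$, the singular set of $e^{Mg}$ is exactly $C_\al$. Property (5) follows immediately from Property (3) by exponentiation.

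Property (6) is the only item requiring genuine branch bookkeeping. Starting from $\arg(z-e^{i\th})\to 0$ at $z=+\infty$ as fixed by \eqref{eq23}, one tracks the argument along a path from $+\infty$ to $0$ in the domain $\C\setminus\bigl((-\infty,-1]\cup\T\bigr)$---for instance a ray in the upper half plane descending toward the origin, slightly deformed around the short upper cut arc $\{e^{i\phi}:0\le\phi\le\th\}$ when $\th>0$---and finds $\log(-e^{i\th})=i(\th+\pi)$ for every $\th\in[-\al,\al]$. Hence
\begin{equation*}
g(0)=i\int_{-\al}^\al(\th+\pi)\rho(\th)\,d\th=i\pi,
\end{equation*}
again by evenness of $\rho$ and $\int\rho=1$. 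The main obstacle in the whole argument is this argument-tracking at $z=0$; once the branch at the origin is pinned down, every remaining statement is a direct manipulation of the identities \eqref{eq24}--\eqref{eq29}.
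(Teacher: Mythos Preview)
Your proof is correct and matches the paper's approach: the paper explicitly presents this proposition ``without proof,'' offering only the brief observations that \eqref{eq30} follows from \eqref{eq27} together with $\int_{-\al}^\al\th\,d\nu_{\eq}(\th)=0$, and that \eqref{eq31} follows from \eqref{eq29}---exactly the justifications you give for Properties (7) and (8). Your verification of the remaining six properties simply fills in the routine details the authors chose to omit.
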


Observe that (\ref{eq30}) follows from (\ref{eq27}), because
\begin{equation}\label{eq32}
\int_{-\al}^\al \th \,d\nu_{\eq}(\th)=0,
\end{equation}
and (\ref{eq31}) follows from (\ref{eq29}).
From (\ref{eq16}) and (\ref{eq30}) we obtain that for $z=e^{i\phi}$,
\begin{equation}\label{eq33}
g_+(z)+g_-(z)= l+\log z+i\pi,
  \quad \textrm{if}\quad \phi \in \supp \nu_{\eq}\cap \supp (\xi\sg-\nu_{\eq}).
\end{equation}

\subsection{Calculation of the equilibrium measure and its resolvent.}\label{EqMeasureProof}
We expect that the upper constraint on the equilibrium measure density is active near the endpoints of the interval $[-\al,\al]$.  Introduce then a number $\be$ with $0<\be<\al$, so that for $\th \in [-\al, -\be] \cup [\be, \al]$, $\rho(\th)\equiv \frac{\xi}{2\pi},$ and for $\th \in (-\be, \be),$ $0<\rho(\th)<\frac{\xi}{2\pi}$.
By differentiating equation (\ref{eq33}), we obtain that
\begin{equation}\label{lx2}
\om_+(e^{i\phi})+\om_-(e^{i\phi})= \frac{1}{e^{i\phi}}\,,
  \quad \textrm{if}\quad \phi \in(-\be, \be).
\end{equation}
 Also the Plemelj--Sokhotsky formula gives that
 \begin{equation}\label{lx3}
\om_-(e^{i\phi})-\om_+(e^{i\phi})= \frac{\xi}{e^{i\phi}}\,,
  \quad \textrm{if}\quad \phi \in [-\al, -\be] \cup [\be, \al].
\end{equation}
Now recall the function $R(z)$ introduced in \eqref{eq38}, and consider its square root $\sqrt{R(z)}$ with a cut on $C_\be$, taking the branch such that $\sqrt{R(z)} \sim z$ as $z\to\infty$. Let us take the contour $C_\be$ oriented such that its $+$-side is inside the unit circle, and its $-$-side is outside the unit circle. The function $\sqrt{R(z)}$ has the following properties:
\begin{enumerate}
\item For $z=e^{i\th} \in C_{\be}$,
 \begin{equation}\label{lx5}
 \sqrt{R(z)}_{\mp} = \pm \sqrt{2} e^{i\th /2} \sqrt{\cos \th - \cos \be}\,,
 \end{equation}
 where $\th \in (-\be, \be)$.
 \item For $\th \in (\be, \al)$
  \begin{equation}\label{lx6}
 \sqrt{R(e^{i\th})} = i \sqrt{2} e^{i\th /2} \sqrt{\cos \be - \cos \th}\,,
 \end{equation}
 and for $\th \in (-\al, -\be)$
  \begin{equation}\label{lx7}
 \sqrt{R(e^{i\th})} = -i \sqrt{2} e^{i\th /2} \sqrt{\cos \be - \cos \th}\,.
 \end{equation}
\end{enumerate}

Now introduce the function
  \begin{equation}\label{lx8}
  w(z):= \frac{\om(z)}{\sqrt{R(z)}}\,.
  \end{equation}
  It satisfies the following Riemann--Hilbert problem:
  \begin{enumerate}
\item $w(z)$ is analytic for $z\in \C \setminus C_{\al}$.
\item For $z\in C_\be$, $w(z)$ satisfies the jump property
  \begin{equation}\label{lx9}
  w_-(z)-w_+(z)=\frac{\om_-(z)}{\sqrt{R(z)}_-}-\frac{\om_+(z)}{\sqrt{R(z)}_+}=\frac{\om_-(z)+\om_+(z)}{\sqrt{R(z)}_-}=\frac{1}{z\sqrt{R(z)}_-}.
  \end{equation}
\item For $z\in C_\al \setminus C_\be$, $w(z)$ satisfies the jump property
  \begin{equation}\label{lx10}
  w_-(z)-w_+(z)=\frac{\om_-(z)-\om_+(z)}{\sqrt{R(z)}}=\frac{\xi}{z\sqrt{R(z)}}.
  \end{equation}
  \item As $z\to\infty$
  \begin{equation}\label{lx11}
  w(z)\sim \frac{1}{z^2}+\dots\,.
  \end{equation}
  \end{enumerate}  
  This RHP can be solved directly using the Plemelj--Sokhotsky formula. It yields
      \begin{equation}\label{lx12}
    w(z)=w_1(z)+w_2(z),
        \end{equation}
    where
    \begin{equation}\label{lx13}
w_1(z)=\frac{1}{2\pi i} \int_{C_\be} \frac{dw}{(z-w)w \sqrt{R(w)}_-}, \qquad w_2(z)=\frac{1}{2\pi i} \int_{C_\al \setminus C_\be} \frac{\xi dw}{(z-w)w \sqrt{R(w)}}\,.
    \end{equation}
  Making the change of variable $w=e^{i\th}$ and using the formulas \eqref{lx5}-\eqref{lx7}, we find
      \begin{equation}\label{lx14}
      \begin{aligned}
      w_1(z)&=\frac{1}{2\pi\sqrt{2}} \int_{-\be}^\be \frac{e^{-i\th/2}\,d\th}{(z-e^{i\th})\sqrt{\cos \th - \cos\be}}, \\ 
      w_2(z)&=\frac{1}{2\pi i \sqrt{2}}\left(\int_{\be}^\al - \int_{-\al}^{-\be}\right)\frac{\xi\, e^{-i\th/2}\,d\th}{(z-e^{i\th})\sqrt{\cos \be - \cos\th}}.
      \end{aligned}
      \end{equation}
      The integral for $w_1$ is rather straightforward to compute. It yields
        \begin{equation}\label{lx15}
      w_1(z)= \frac{1}{2z}\left(1+\frac{1}{\sqrt{R(z)}}\right)\,.
      \end{equation}  
      Notice that $\sqrt{R(0)}=-1$, so there is no singularity at the origin.
      
      The integral for $w_2$ is more difficult to compute, but can be computed as follows. From \eqref{lx14} we have
              \begin{equation}\label{lx15a}
              \begin{aligned}
            w_2(z)&=\frac{\xi}{2\sqrt{2}\pi i}\left(\int_{\be}^\al - \int_{-\al}^{-\be}\right)\frac{\cos(\th/2)-i\sin(\th/2)}{(z-\cos\th-i\sin\th)\sqrt{\cos \be - \cos\th}}d\th \\
            &=   \frac{\xi}{\sqrt{2}\pi}\int_{\be}^\al\frac{-(z-1)\sin(\th/2)+2\cos\th\sin(\th/2)}{(z^2-2z\cos\th+1)\sqrt{\cos \be - \cos\th}}d\th, \\
     \end{aligned}
     \end{equation}
where the last equality follows from the symmetry of the intervals $[-\al,-\be]$ and $[\be,\al]$. Now make the change of variable $x=\cos\th$. Notice then that $\sin(\th/2)=\sqrt{(1-x)/2}$. Introducing the notations $A=\cos\al$ and $B=\cos\be$, we then have
       \begin{equation}\label{lx15b}
      w_2(z)=w_{21}(z)+w_{22}(z),
      \end{equation}
      where
        \begin{equation}\label{lx15c}    
        \begin{aligned}
  w_{21}(z)&=-\frac{\xi(z-1)}{2\pi}\int_A^B \frac{dx}{\sqrt{1+x}\sqrt{B-x}(z^2-2zx+1)} \\
					 &=-\frac{\xi(z-1)}{4z\pi}\int_A^B \frac{dx}{\sqrt{1+x}\sqrt{B-x}\left(\frac{z^2+1}{2z}-x\right)}\,, \\
					\end{aligned}
     \end{equation}
					 \begin{equation}\label{lx15d}    
        \begin{aligned}
	w_{22}(z)&=\frac{\xi}{\pi}\int_A^B \frac{x\,dx}{\sqrt{1+x}\sqrt{B-x}(z^2-2zx+1)} \\
					 &=\frac{\xi}{2z\pi}\int_A^B \frac{x\,dx}{\sqrt{1+x}\sqrt{B-x}\left(\frac{z^2+1}{2z}-x\right)}\,.
     \end{aligned}
     \end{equation}
		
Let us first compute $w_{21}(z)$. Introduce the linear fractional transformation 
  \begin{equation}\label{lx16}
	v\equiv v(x):=-\frac{(z+1)^2}{R(z)}\frac{x-B}{x+1}.
		\end{equation}
Notice that
\begin{equation}\label{lx17}
v(B)=0\,, \qquad v(-1)= \infty\,, \qquad v\left(\frac{z^2+1}{2z}\right)=-1\,.
\end{equation}
The inverse transformation is
\begin{equation}\label{lx18}
x(v)=-\frac{R(z)v-B(z+1)^2}{R(z)v+(z+1)^2}\,.
\end{equation}
Making the change of variables $x\mapsto v$ in \eqref{lx15c}, we have
\begin{equation}\label{lx19}
        \begin{aligned}
  w_{21}(z)&=-\frac{\xi(z-1)}{2\pi}\int_A^B \frac{x'(v)\,dv}{\sqrt{1+x(v)}\sqrt{B-x(v)}\left(z^2-2zx(v)+1\right)} \\
	        &=-\frac{\xi(z-1)}{2\pi \sqrt{R(z)}(z+1)}\int_0^{v(A)} \frac{dv}{(1+v)\sqrt{v}}. \\     
					\end{aligned}
     \end{equation}		
		Now letting $v=y^2$, we have
\begin{equation}\label{lx20}
        \begin{aligned}
  w_{21}(z)&=-\frac{\xi(z-1)}{\pi \sqrt{R(z)}(z+1)}\int_0^{\sqrt{v(A)}} \frac{dy}{(1+y^2)} \\  
		&=-\frac{\xi(z-1)}{\pi \sqrt{R(z)}(z+1)}\arctan\left(\sqrt{v(A)}\right)\,. \\    
					\end{aligned}
     \end{equation}
		The argument of $\arctan$, $\sqrt{v(A)}$, is of course a function of $z$, so let us write it as
\begin{equation}\label{lx21}
S(z):=v(A)=\frac{(z+1)^2(B-A)}{R(z)(A+1)}\,,
\end{equation}
so that 
\begin{equation}\label{lx22}
        \begin{aligned}
  w_{21}(z)&=-\frac{\xi(z-1)}{\pi \sqrt{R(z)}(z+1)}\arctan\left(\sqrt{S(z)}\right)\,. \\    
					\end{aligned}
     \end{equation}
The arctangent is taken with the usual cut on $i\R\setminus [-i, i]$. Using \eqref{lx6} and \eqref{lx7} we see that $\sqrt{S(z)}$ maps the arc $\{e^{i\theta} : \be\le \theta \le \al\}$ to ray $(-i\infty, -i)$, and maps the arc $\{e^{i\theta} : -\al\le \theta \le-\be\}$ to ray $(i, i\infty)$, thus $\arctan(\sqrt{S(z)})$ has cuts on these arcs, with an additive jump of $\pi$. It also has a jump of sign across the arc $C_\be$ due to the cut for $\sqrt{R(z)}$.

Let us now calculate $w_{22}(z)$. Making the same change of variable $x\mapsto v$ in \eqref{lx15d} we find
 \begin{equation}\label{lx23}    
 \begin{aligned}
	w_{22}(z)&=\frac{\xi}{\pi\sqrt{R(z)}(z+1)}\int_0^{v(A)} \frac{(B(z+1)^2-R(z)v)\,dv}{\sqrt{v}(1+v)((z+1)^2+R(z)v)} \\
	&=\frac{2\xi}{\pi\sqrt{R(z)}(z+1)}\int_0^{\sqrt{v(A)}} \frac{(B(z+1)^2-R(z)y^2)\,dy}{(1+y^2)((z+1)^2+R(z)y^2)} \\
	&=\frac{\xi}{\pi z\sqrt{R(z)}(z+1)}\int_0^{\sqrt{v(A)}} \left[\frac{z^2+1}{1+y^2}-\frac{R(z)(z+1)^2}{(z+1)^2+R(z)y^2}\right]\,dy \\
	&=\frac{\xi}{\pi z\sqrt{R(z)}(z+1)}\left[(z^2+1)\arctan\left(\sqrt{S(z)}\right)-\sqrt{R(z)}(z+1)\arctan\left(\frac{\sqrt{R(z)S(z)}}{z+1}\right)\right], \\
\end{aligned}
\end{equation}
	which simplifies to
\begin{equation}\label{lx24}    
\begin{aligned}
w_{22}(z)&=\frac{\xi}{\pi z}\left[\frac{(z^2+1)\arctan\left(\sqrt{S(z)}\right)}{\sqrt{R(z)}(z+1)}-\arctan\left(\sqrt{\frac{B-A}{A+1}}\right)\right]
\end{aligned}
\end{equation}	
	Adding $w_{21}(z)$ and $w_{22}(z)$ we find
\begin{equation}\label{lx25}    
\begin{aligned}
w_2(z)=w_{21}(z)+w_{22}(z)=\frac{\xi}{\pi z}\left[\frac{\arctan(\sqrt{S(z)})}{\sqrt{R(z)}}-\arctan\left(\sqrt{\frac{B-A}{A+1}}\right)\right].
 \end{aligned}
\end{equation}			
        We can now recover the resolvent $\om(z)$:
 \begin{equation}\label{lx27}     
 \begin{aligned}
            \om(z)&=\sqrt{R(z)} [w_1(z)+w_2(z)] \\
            &=\frac{\sqrt{R(z)}}{z}\left(\frac{1}{2}-\frac{\xi\arctan\left(\sqrt{\frac{B-A}{A+1}}\right)}{\pi}\right)+\frac{1}{z}\left(\frac{1}{2}+\frac{\xi\arctan(\sqrt{S(z)})}{\pi}\right).
\end{aligned}
\end{equation}
            
 It remains to determine the value of $\be$. This can be determined by the condition $\om(z)\sim\frac{1}{z}$ as $z\to\infty$. Using \eqref{lx27} and taking $z\to\infty$, we find that
\begin{equation}\label{lx28}
\begin{aligned}
            \om(z)&\sim \frac{1}{2}-\frac{\xi}{\pi}\arctan\left(\sqrt{\frac{B-A}{A+1}}\right)  \\
            &\qquad \qquad +\frac{1}{z}\left[ -B\left(\frac{1}{2}-\frac{\xi}{\pi}\arctan\left(\sqrt{\frac{B-A}{A+1}}\right)\right)+\frac{1}{2}+\frac{\xi}{\pi}\arctan\left(\sqrt{\frac{B-A}{A+1}}\right)\right] \\
            &\qquad \qquad+\bigO(z^{-2})\,.
\end{aligned}
\end{equation}
  The constant term vanishes and the ($1/z$)-term is $1$ if and only if
\begin{equation}\label{lx29}
\begin{aligned}
   \frac{1}{2}=\frac{\xi}{\pi}\arctan\left(\sqrt{\frac{B-A}{A+1}}\right).
 \end{aligned}
 \end{equation}
	Solving this equation for $B$ gives
\begin{equation}\label{lx30}
B=A+\tan^2\left(\frac{\pi}{2\xi}\right)(1+A).
\end{equation}
Using this value for $B$, the formula \eqref{lx27} simplifies to            
\begin{equation}\label{lx30a}     
\om(z)=\frac{1}{2z} +\frac{\xi}{\pi z}\arctan\left(\sqrt{S(z)}\right)\,, \qquad \sqrt{S(z)}= \frac{(z+1)\tan\left(\frac{\pi}{2\xi}\right)}{\sqrt{R(z)}},
\end{equation}
which proves Proposition \ref{g-function}.

Finally we can recover the density for the equilibrium measure. For $z=e^{i\th}\in C_\be$ we have
\begin{equation}\label{lx31}  
\begin{aligned}
\rho(\theta)&=\frac{z}{2\pi}[\om_-(z)-\om_+(z)]\,, \\
&=\frac{\xi}{2\pi^2}\left[\arctan\left(\frac{(z+1)\tan\left(\frac{\pi}{2\xi}\right)}{\sqrt{R(z)}_-}\right)-\arctan\left(\frac{(z+1)\tan\left(\frac{\pi}{2\xi}\right)}{\sqrt{R(z)}_+}\right)\right] \\
&=\frac{\xi}{\pi^2}\arctan\left(\frac{(z+1)\tan\left(\frac{\pi}{2\xi}\right)}{\sqrt{R(z)}_-}\right).
\end{aligned}
\end{equation}
Using the formula \eqref{lx5}, we can then write
 \begin{equation}\label{lx32}  
\begin{aligned}
\rho(\theta)&=\frac{\xi}{\pi^2}\arctan\left(\frac{\sqrt{2}\tan\left(\frac{\pi}{2\xi}\right)\cos(\th/2)}{\sqrt{\cos\th-\cos\be}}\right).
\end{aligned}
\end{equation}
Since the function $\arctan(\sqrt{S(z)})$ has an additive jump of $\pi$ across the arcs $C_{\al} \setminus C_\be$, we find
 \begin{equation}\label{lx33}
\rho(\th)\equiv \frac{\xi}{2\pi},
\end{equation}
for $\th\in [-\al,-\be] \cup [\be,\al]$. This proves Proposition \ref{EqMeasure}.

\subsection{Computation of the Lagrange multiplier}\label{prop_lmProof}

We now compute the value of the Lagrange multiplier $l$ in \eqref{eq15}. Using \eqref{eq31} and \eqref{eq33} with $z=1$, we find 
\begin{equation}\label{lm1}
l=2g_-(1).
\end{equation}
To compute the value of $g_-(1)$, recall that $\om(z)=g'(z)$, where $\om(z)$ is given explicitly in \eqref{lx30a}. Since $g(z)\sim \log z$ as $z\to\infty$, it follows that
\begin{equation}\label{lm2}
\begin{aligned}
l&=2\lim_{X\to\infty} \left[\log X-\int_1^X \om(z)\,dz\right] \\
&=2\lim_{X\to\infty} \left[\frac{1}{2}\log X-\frac{\xi}{\pi}\int_1^X\frac{1}{z} \arctan\left(\sqrt{S(z)}\right)\,dz\right].
\end{aligned}
\end{equation}
Using the integral representation for the arctan function we can write \eqref{lm2} as
\begin{equation}\label{lm3}
\begin{aligned}
l&=2\lim_{X\to\infty} \left[\frac{1}{2}\log X-\frac{\xi}{\pi}\int_1^X\,\frac{dz}{z}\int_0^{\sqrt{S(z)}} \frac{dx}{1+x^2}\right].
\end{aligned}
\end{equation}
Notice that the function $\sqrt{S(z)}$ is monotonically decreasing on $(1,\infty)$, with 
\begin{equation}\label{lm4}
\sqrt{S(1)}=\frac{\sqrt{2}\tan\left(\frac{\pi}{2\xi}\right)}{\sqrt{1-B}}\,, \qquad \sqrt{S(\infty)}=\tan\left(\frac{\pi}{2\xi}\right).
\end{equation}
We can thus change the order of integration in \eqref{lm3} to obtain
\begin{equation}\label{lm5}
\begin{aligned}
l&=2\lim_{X\to\infty} \left[\frac{1}{2}\log X-\frac{\xi}{\pi}\int_1^X\,\frac{dz}{z}\int_0^{\tan\left(\frac{\pi}{2\xi}\right)} \frac{dx}{1+x^2}\right]-\frac{2\xi}{\pi}\int_{\tan\left(\frac{\pi}{2\xi}\right)}^{\frac{\sqrt{2}\tan\left(\frac{\pi}{2\xi}\right)}{\sqrt{1-B}}}\,dx\int_1^{f(x)}\frac{dz}{z(1+x^2)}\,,
\end{aligned}
\end{equation}
where in the latter integral the limit in $X$ has been taken (and the indefinite integral converges) and the function $f(x)$ is the functional inverse of $\sqrt{S(z)}$ on the interval of integration:
\begin{equation}\label{lm6}
f: \left(\tan\left(\frac{\pi}{2\xi}\right), \frac{\sqrt{2}\tan\left(\frac{\pi}{2\xi}\right)}{\sqrt{1-B}}\right]\to [1,\infty)\,, \qquad f\left(\sqrt{S(z)}\right)=z\,.
\end{equation}
Explicitly we have
\begin{equation}\label{lm7}
f(x)=\frac{Bx^2+\tan\left(\frac{\pi}{2\xi}\right)^2+x\sqrt{1+B}\sqrt{2\tan\left(\frac{\pi}{2\xi}\right)^2-(1-B)x^2}}{x^2-\tan\left(\frac{\pi}{2\xi}\right)^2}\,.
\end{equation}
Simplifying \eqref{lm5} gives
\begin{equation}\label{lm8}
\begin{aligned}
l&=2\lim_{X\to\infty} \left[\frac{1}{2}\log X-\frac{\xi}{\pi}(\log X) \arctan\left(\tan\left(\frac{\pi}{2\xi}\right)\right)\right]-\frac{2\xi}{\pi}\int_{\tan\left(\frac{\pi}{2\xi}\right)}^{\frac{\sqrt{2}\tan\left(\frac{\pi}{2\xi}\right)}{\sqrt{1-B}}}\frac{\log f(x)}{1+x^2}\,dx \\
&=-\frac{2\xi}{\pi}\int_{\tan\left(\frac{\pi}{2\xi}\right)}^{\frac{\sqrt{2}\tan\left(\frac{\pi}{2\xi}\right)}{\sqrt{1-B}}}\frac{\log f(x)\,}{1+x^2}\,dx<0\,.
\end{aligned}
\end{equation}
Making the change of variable $x\mapsto \tan\left(\frac{\pi}{2\xi}\right) x$, proves \eqref{eq39}. Since $\beta\to\alpha$ as $\xi\to\infty$, we immediately obtain \eqref{lm10} as well.


\section{Riemann--Hilbert analysis }\label{RH_analysis}

In this section we perform the Riemann--Hilbert analysis for the orthogonal polynomials \eqref{def:orthoprod3}, which is the main part of the proof of the main theorems. The main idea is that the orthogonal polynomials can be encoded into the solution to a certain $2\times2$ matrix-valued Riemann--Hilbert problem (RHP) as formulated by Fokas, Its, and Kitaev \cite{Fokas-Its-Kitaev92}, and that this RHP can then be evaluated asymptotically as the degree of the orthogonal polynomials approach infinity using the steepest descent method of Deift and Zhou \cite{Deift-Zhou92}. For a description of this analysis for a general class of continuous orthogonal polynomials on the real line, see e.g. \cite{Deift-Kriecherbauer-McLaughlin-Venakides-Zhou99, Deift99, Bleher-Liechty14}. In our case we need to deal with discrete orthogonal polynomials, and the analysis is slightly different, see e.g. \cite{Bleher-Liechty14, BKMM}.

We begin with an interpolation problem which encodes the orthogonal polynomials \eqref{def:orthoprod3}.
\subsection{Interpolation Problem}\label{sec:IP}
We seek a $2 \times 2$ matrix valued function $\mathbf P_M(z)$ satisfying the following conditions.
\begin{enumerate}
\item {\it Analyticity.}  $\mathbf P_M(z)$ is analytic for all $z \in \C \setminus L_{\al, m}$.
\item {\it Residues at poles.}  The entries $\mathbf P_M(z)_{11}$ and $\mathbf P_M(z)_{21}$ are each entire functions of $z$, whereas the entries $\mathbf P_M(z)_{12}$ and $\mathbf P_M(z)_{22}$ have simple poles at each node $L_{\al, m}$ such that
    \begin{equation}\label{IP2}
    \underset{z=x}{\Res}\mathbf P_M(z)_{j2}=\frac{1}{m\, x^{M-1}} \mathbf P_M(x)_{j1}\,, \ j=1,2.
    \end{equation}
\item {\it Asymptotics at infinity.}  As $z\to \infty$, $\mathbf P_M(z)$ admits the expansion
    \begin{equation}\label{IP3}
    \mathbf P_M(z) \sim \left(\I + \frac{\mathbf P_1}{z} + \frac{\mathbf P_2}{z^2}+ \cdots \right)\begin{pmatrix} z^M & 0 \\ 0 & z^{-M}\end{pmatrix}.
    \end{equation}
\end{enumerate}
It is not difficult to see that this Interpolation Problem has the unique solution,
\begin{equation}\label{IP4}
\mathbf P_M(z) =\begin{pmatrix} p_M(z) &  \frac{1}{m}\underset{x\in L_{\al, m}}{\sum} \frac{p_M(x)}{(z-x)x^{M-1}} \\ \frac{1}{h_{M-1}} p_{M-1}^*(z) &  \frac{1}{m\, h_{M-1}}\underset{x\in L_{\al, m}}{\sum} \frac{p_{M-1}^*(x) }{(z-x)x^{M-1}} \end{pmatrix}\,.
\end{equation}
Evaluating at $z=0$ gives
\begin{equation}\label{IP4a}
\mathbf P_M(0) =\begin{pmatrix} -\rho_{M} &  -h_M \\ \frac{1}{h_{M-1}}  &  -\rho_{M} \end{pmatrix}\,.
\end{equation}

While the interpolation problem provides the initial step in the use of the RH techniques the matrix ${\bf P}_M$ is not yet in the form amenable for convenient analysis since it has poles which must be removed. This is done below where we  reduce this interpolation problem to a Riemann--Hilbert problem. 
For some $\ep>0$, introduce the notations
\begin{equation}\label{red0}
\begin{aligned}
C^{\pm}_\al&:=\{ z\in \C : |z| = 1 \mp \ep \ \textrm{and} \ -\al < \arg z < \al\}\,, \\ \qquad I^{+}_\al &:= \{ z\in \C : 1-\ep \le |z| \le 1 \ \textrm{and} \ \arg z = \al\}\,, \\ I^{-}_\al &:= \{ z\in \C : 1 \le |z| \le 1+\ep  \ \textrm{and} \ \arg z = \al\}.
\end{aligned}
\end{equation}
 Introduce the function 
\begin{equation}\label{red1}
\Pi(z)=z^{m/2}-z^{-m/2},
\end{equation}
which is meromorphic if $m$ is even. If $m$ is odd, we may take the cut on the negative real axis.
This function has the property that
\begin{equation}\label{red2}
\Pi'(x_j)=\,\frac{(-1)^j m}{x_j}\,, \quad \textrm{for}\quad x_j=e^{\frac{2\pi i j}{m}}\in L_{\al, m}\,,
\end{equation}
and therefore
\begin{equation}\label{red3}
\underset{z=x_j}{\Res} \, \frac{z^{m/2}}{z^{M}\Pi(z)}=\underset{z=x_j}{\Res} \, \frac{1}{z^{M}z^{m/2}\Pi(z)}=\frac{1}{m x_j^{M-1}}.
\end{equation}

Introduce the upper triangular matrices
\begin{equation}\label{red4}
\mathbf D^u_{\pm}(z)=\begin{pmatrix} 1 & -\frac{z^{-M}}{\Pi(z)}z^{\pm m/2} \\ 0 & 1\end{pmatrix},
\end{equation}
and the lower triangular matrices
\begin{equation}\label{red4a}
\mathbf D^l_{\pm}(z)=\begin{pmatrix} \Pi(z)^{-1} & 0 \\ -z^{M}z^{\pm m/2} &  \Pi(z) \end{pmatrix}=\begin{pmatrix} \Pi(z)^{-1} & 0 \\ 0 & \Pi(z)\end{pmatrix} \begin{pmatrix} 1 & 0 \\ -\frac{z^{M} z^{\pm m/2}}{\Pi(z)} & 1 \end{pmatrix}\,.
\end{equation}
Introduce also the regions
\begin{equation}\label{red4ba}
\begin{aligned}
\Om_\pm^{\nabla}&=\{z : 0<\pm (1-|z|)<\ep \ \textrm{and} \ -\be<\arg z <\be\}\,, \\
\Om_\pm^\De&=\{z : 0<\pm (1-|z|)<\ep \ \textrm{and} \ \be<\arg z <\al \textrm{ or } -\al<\arg z<-\be\}\,.
\end{aligned}
\end{equation}
We now make the transformation
\begin{equation}\label{red5}
\mathbf R_M(z)=\left\{
\begin{aligned}
&\sg_3 \mathbf P_{M}(z) \mathbf D^u_{\pm}(z)\sg_3\,, \quad z\in \Om^\nabla_{\pm} \\
&\sg_3 \mathbf P_{M}(z) \mathbf D^l_{\pm}(z)\sg_3\,, \quad z\in \Om^\De_{\pm} \\
&\sg_3\mathbf P_{M}(z)\sg_3, \qquad \textrm{otherwise},
\end{aligned}\right.
\end{equation}
where $\sg_3=\begin{pmatrix} 1 & 0 \\ 0 & -1 \end{pmatrix}$ is the third Pauli matrix.
It is easy to check that the function $\mathbf R_M(z)$ has no poles. It has jumps on each of the arcs $C_{\al}$ and $C^{\pm}_\al$ as well as on the intervals $I^{\pm}_{\al}$, $I^{\pm}_{(- \al)}$  $I^{\pm}_{\be}$, $I^{\pm}_{(- \be)}$. Specifically, the matrix valued function satisfies the jump condition
\begin{equation}\label{red6}
\mathbf R_{M+}(z)=\mathbf R_{M-}(z)\mathbf J_R(z)\,, \quad z\in \Sg_R,
\end{equation}
for some jump function $\mathbf J_R(z)$, where $\Sg_R=C_{\al} \cup C^+_{\al} \cup C^-_{\al} \cup I^{+}_\al \cup I^{-}_\al \cup I^{+}_\be \cup I^{-}_\be\cup I^{+}_{(-\al)} \cup I^{-}_{(-\al)} \cup I^{+}_{(-\be)} \cup I^{-}_{(-\be)}.$ We consider the following orientation of $\Sg_R$: $C_\al$ and $C^-_\al$ are oriented counterclockwise; $C^+_\al$ is oriented clockwise; $I^\pm_\al$ and $I^{\pm}_{\be}$ are oriented towards the origin; and $I^\pm_{(-\al)}$ and $I^{\pm}_{(-\be)}$ are oriented away from the origin. Then the jump matrix $\mathbf J_R(z)$ is given by
\begin{equation}\label{red7}
\mathbf J_R(z)=\left\{
\begin{aligned}
&\sg_3 \mathbf D_-^u(z)^{-1}\mathbf D_+^u(z) \sg_3 = \begin{pmatrix} 1 & z^{-M} \\ 0 & 1 \end{pmatrix}\,, \quad \textrm{for} \ z\in C_{\be} \\
&\sg_3 \mathbf D_-^l(z)^{-1}\mathbf D_+^l(z) \sg_3 = \begin{pmatrix} 1 &0 \\  z^{M} & 1 \end{pmatrix}\,, \quad \textrm{for} \ z\in C_\al \setminus C_{\be}  \\
&\sg_3 \mathbf D_\pm^u(z)\sg_3 = \begin{pmatrix} 1 &\pm \frac{1}{z^{M}(1-z^{\mp m})} \\ 0 & 1 \end{pmatrix}\,, \quad \textrm{for} \ z\in C^{\pm}_{\be} \\
&\sg_3 \mathbf D_\pm^l(z)\sg_3 = \begin{pmatrix} \Pi(z)^{-1} & 0 \\ z^M z^{\pm m/2} & \Pi(z) \end{pmatrix}\,, \quad \textrm{for} \ z\in \{C^{\pm}_{\al}\setminus C^{\pm}_{\be}\} \cup I^{\pm}_\al \cup I^{\pm}_{(-\al)} \\
&\sg_3 \mathbf D_\pm^l(z)^{-1}\mathbf D_\pm^u(z) \sg_3=\begin{pmatrix} \Pi(z) & z^{-M} z^{\pm m/2} \\ -z^M z^{\pm m/2} & \mp z^{\pm m/2} \end{pmatrix}\,, \quad \textrm{for} \ z\in I^{\pm}_{\be}\cup I^{\pm}_{(-\be)}.
\end{aligned}\right.
\end{equation}

\subsection{First transformation of the RHP}
We make the change of variables
\begin{equation}\label{ft1}
\mathbf R_M(z)=e^{\frac{M(l+i\pi)}{2}\sg_3}\mathbf T_M(z) e^{M\left(g(z)-\frac{l+i\pi}{2}\right)\sg_3}.
\end{equation}
Then $\mathbf T_M$ satisfies the following RHP:
\begin{enumerate}
\item $\mathbf T_M(z)$ is analytic on $\C \setminus \Sg_R$.
\item $\mathbf T_{M+}(z)=\mathbf T_{M-}(z)\mathbf J_T(z)$ where
\begin{equation}\label{ft2}
\mathbf J_T(z)=\left\{
\begin{aligned}
&e^{M\left(g(z)-\frac{l+i\pi}{2}\right)\sg_3}\mathbf J_R(z)e^{-M\left(g(z)-\frac{l+i\pi}{2}\right)\sg_3}\,, \quad z\in \Sg_R\setminus C_\al \\
&e^{M\left(g_{-}(z)-\frac{l+i\pi}{2}\right)\sg_3}\mathbf J_R(z)e^{-M\left(g_{+}(z)-\frac{l+i\pi}{2}\right)\sg_3}\,,\quad z\in C_\al.
\end{aligned}\right.
\end{equation}
\item As $z\to\infty$,
\begin{equation}\label{ft3}
\mathbf T_M(z)=\mathbf I+\frac{\mathbf T_1}{z}+\frac{\mathbf T_2}{z^2}+\cdots
\end{equation}
\end{enumerate}
More specifically, the jump functions are given by
\begin{equation}\label{ft4}
\mathbf J_T(z)=\left\{
\begin{aligned}
& \begin{pmatrix} e^{-MG(z)} & e^{M(g_+(z)+g_-(z)-l-i\pi -\log(z))} \\ 0 & e^{MG(z)} \end{pmatrix}\,, \quad \textrm{for} \ z\in C_{\be} \\
&\begin{pmatrix} e^{-MG(z)} & 0 \\ e^{-M(g_+(z)+g_-(z)-l -i\pi-\log(z))} & e^{MG(z)} \end{pmatrix}\,, \quad \textrm{for} \ z\in C_\al \setminus C_{\be}  \\
& \begin{pmatrix} 1 &\pm \frac{e^{M(2g(z)-l-i\pi-\log(z))}}{1-z^{\mp m}} \\ 0 & 1 \end{pmatrix}\,, \quad \textrm{for} \ z\in C^{\pm}_{\be} \\
& \begin{pmatrix} \Pi(z)^{-1} & 0 \\ e^{-M(2g(z)-l-i\pi-\log(z))}z^{\pm m/2} & \Pi(z) \end{pmatrix}\,, \quad \textrm{for} \ z\in \{C^{\pm}_{\al}\setminus C^{\pm}_{\be}\} \cup I^{\pm}_\al \cup I^{\pm}_{(-\al)} \\
&\begin{pmatrix} \Pi(z) & e^{M(2g(z)-l-i\pi-\log(z))} z^{\pm m/2} \\ -e^{-M(2g(z)-l-i\pi-\log(z))} z^{\pm m/2} & \mp z^{\pm m/2} \end{pmatrix}\,, \quad \textrm{for} \ z\in I^{\pm}_{\be}\cup I^{\pm}_{(-\be)},
\end{aligned}\right.
\end{equation} 
where we recall the function $G(z)$ defined in \eqref{eq31}.

\subsection{The second transformation of the RHP}
The Euler--Lagrange variational conditions \eqref{eq15} together with \eqref{eq30} imply that on $C_\be$ the jump matrix $\mathbf J_T$ has the form
\begin{equation}\label{st1}
\mathbf J_T(z)=\begin{pmatrix} e^{-MG(z)} & 1 \\ 0 & e^{MG(z)} \end{pmatrix}=\begin{pmatrix} 1 & 0 \\ e^{MG(z)} & 1 \end{pmatrix}\begin{pmatrix} 0 & 1 \\ -1 & 0 \end{pmatrix}\begin{pmatrix} 1 & 0 \\ e^{-MG(z)} & 1 \end{pmatrix}. \\
\end{equation}
Notice also that for $z\in C_\al \setminus C_\be$, by \eqref{eq31} and \eqref{lx27},
\begin{equation}\label{st2}
MG(z)=\left\{
\begin{aligned}
&2\pi i M \left(\al-\frac{\log z}{i}\right)\frac{\xi}{2\pi}=im\al-m\log z\,, \quad \textrm{for} \ \be < \arg z < \al\,, \\
&2\pi i M \left[1- \frac{\xi}{2\pi}\left(\frac{\log z}{i} +\al\right)\right]\al=2\pi i M-m\log z-im\al\,, \quad \textrm{for} \ -\al < \arg z < -\be\,,
\end{aligned}\right.
\end{equation}
thus
\begin{equation}\label{st2a}
e^{MG(z)}=\left\{
\begin{aligned}
&e^{im\al}z^{-m}\,, \quad \textrm{for} \ \be < \arg z < \al \\
&e^{-im\al}z^{-m}\,, \quad \textrm{for} \ -\al < \arg z < -\be.
\end{aligned}\right.
\end{equation}
We make the transformation
\begin{equation}\label{st3}
\mathbf S_M(z)=\left\{
\begin{aligned}
&\mathbf T_M(z)\mathbf J_+(z)^{-1}\,, \quad z\in \ \Om_+^\nabla \\
&\mathbf T_M(z)\mathbf J_-(z)\,, \quad z\in \ \Om_-^\nabla \\
&-\mathbf T_M(z) z^{-(m/2)\sg_3}\,, \quad z\in \ \Om_+^\De \\
&\mathbf T_M(z) z^{(m/2)\sg_3}\,, \quad z\in \ \Om_-^\De \\
&\mathbf T_M(z)\,, \quad \textrm{otherwise},
\end{aligned}\right.
\end{equation}
where
\begin{equation}\label{st4}
\mathbf J_+(z):=\begin{pmatrix} 1 & 0 \\ e^{-MG(z)} & 1 \end{pmatrix}\,, \qquad \mathbf J_-(z):=\begin{pmatrix} 1 & 0 \\ e^{MG(z)} & 1 \end{pmatrix}\,.
\end{equation}
Then $\mathbf S_M(z)$ satisfies the jump condition
\begin{equation}\label{st5}
\mathbf S_{M+}(z)=\mathbf S_{M-}(z)\mathbf J_S(z)\,,
\end{equation}
where
\begin{equation}\label{st5a}
\mathbf J_S(z)=\left\{
\begin{aligned}
& \begin{pmatrix} 0 & 1 \\ -1 & 0 \end{pmatrix}\,, \quad \textrm{for} \ z\in C_{\be} \\
&\begin{pmatrix} e^{-im\al-i\pi} & 0 \\ -e^{-M(g_+(z)+g_-(z)-l -i\pi-\log(z))} & e^{im\al+i\pi} \end{pmatrix}\,, \quad \textrm{for} \ z\in C_\al \setminus C_{\be}\,, \ \be<\arg z<\al  \\
&\begin{pmatrix} e^{im\al+i\pi} & 0 \\ -e^{-M(g_+(z)+g_-(z)-l -i\pi-\log(z))} & e^{-im\al-i\pi} \end{pmatrix}\,, \quad \textrm{for} \ z\in C_\al \setminus C_{\be}\,, \ -\al<\arg z<-\be  \\
& \mathbf J_T(z) \mathbf J_-(z)=\begin{pmatrix} 1-\frac{e^{M(g_+(z)+g_-(z)-l-i\pi-\log(z))}}{1-z^m}& -\frac{e^{M(2g(z)-l-i\pi-\log(z))}}{1-z^m} \\ e^{MG(z)} & 1 \end{pmatrix}\,, \quad \textrm{for} \ z\in C^{-}_{\be} \\
& \mathbf J_T(z) \mathbf J_+(z)^{-1}=\begin{pmatrix} 1-\frac{e^{M(g_+(z)+g_-(z)-l-i\pi-\log(z))}}{1-z^{-m}}& \frac{e^{M(2g(z)-l-i\pi-\log(z))}}{1-z^{-m}} \\ -e^{-MG(z)} & 1 \end{pmatrix}\,, \quad \textrm{for} \ z\in C^{+}_{\be} \\
& -\mathbf J_T(z) z^{-(m/2)\sg_3}=\begin{pmatrix} (1-z^m)^{-1} & 0 \\ -e^{-M(2g(z)-l-i\pi-\log(z))} & 1-z^m \end{pmatrix}\,, \quad \textrm{for} \ z\in \{C^{+}_{\al}\setminus C^{+}_{\be}\} \cup I^{+}_\al \cup I^{+}_{-\al} \\
& \mathbf J_T(z) z^{(m/2)\sg_3}=\begin{pmatrix} (1-z^{-m})^{-1} & 0 \\ e^{-M(2g(z)-l-i\pi-\log(z))} & 1-z^{-m} \end{pmatrix}\,, \quad \textrm{for} \ z\in \{C^{-}_{\al}\setminus C^{-}_{\be}\} \cup I^{-}_\al \cup I^{-}_{-\al} \\
&-z^{(m/2)\sg_3}\mathbf J_T(z)\mathbf J_+(z)^{-1}=\begin{pmatrix} 1 & -e^{MG(z)} z^m \\ 0 & 1 \end{pmatrix}, \quad \textrm{for} \ z\in I^{+}_{\be}\cup I^{+}_{-\be} \\
&z^{-(m/2)\sg_3}\mathbf J_T(z)\mathbf J_-(z)=\begin{pmatrix} 1 & e^{-MG(z)} z^{-m} \\ 0 & 1 \end{pmatrix}, \quad \textrm{for} \ z\in I^{-}_{\be}\cup I^{-}_{-\be}.
\end{aligned}\right.
\end{equation} 
Since we have assumed that $e^{im\al}=-1$, the jump on the arcs $C_\al \setminus C_\be$ is in fact exponentially close to the identity matrix.

\subsection{The model RHP}
The model Riemann--Hilbert problem is the problem obtained by disregarding all jumps of $\mathbf S_n$ which are asymptotically small as $M, N\to \infty$.  We therefore seek a $2 \times 2$ matrix $\mathbf M(z)$ solving the following RHP.
\begin{enumerate}
\item $\mathbf M(z)$ is analytic on $\C \setminus C_\be$.
\item On the contour $C_\be$, $\mathbf M$ satisfies the jump condition
\begin{equation}\label{mp1}
\mathbf M_+(z)=\mathbf M_-(z)\begin{pmatrix} 0 & 1 \\ -1 & 0\end{pmatrix}.
\end{equation}
\item As $z\to\infty$,
\begin{equation}\label{mp2}
\mathbf M(z)=\mathbf I+\frac{\mathbf M_1}{z}+\frac{\mathbf M_2}{z^2}+\cdots
\end{equation}
\end{enumerate}
The solution to this RHP is well known. Introduce the function
\begin{equation}\label{mp3}
\ga(z):=\left(\frac{z-e^{-i\be}}{z-e^{i\be}}\right)^{1/4},
\end{equation}
with a cut on $C_\be$, taking the branch such that $\ga(\infty)=1$.
Then the solution to the model RHP is
\begin{equation}\label{mp4}
\mathbf M(z):=\begin{pmatrix} \frac{\ga(z)+\ga(z)^{-1}}{2} & \frac{\ga(z)-\ga(z)^{-1}}{-2i} \\ \frac{\ga(z)-\ga(z)^{-1}}{2i} & \frac{\ga(z)+\ga(z)^{-1}}{2}\end{pmatrix}.
\end{equation}
On the cut $C_\be$ the function $\ga(z)$ takes the limiting values
\begin{equation}\label{mp5}
\ga_{\pm}(e^{i\th})=e^{-\frac{i}{4}(\be\pm \pi)} \left[\frac{\cos\th-\cos\be}{1-\cos(\be-\th)}\right]^{1/4}\,, \qquad \th\in(-\be,\be)\,.
\end{equation}
In particular this implies that the top left entry of $\mathbf M(z)$ takes the limiting values
\begin{equation}\label{mp6}
\mathbf M_{11}(e^{i\th})_{\pm}=\frac{e^{\mp i\pi/4}}{2}\left[e^{-i\be/4}\left[\frac{\cos\th-\cos\be}{1-\cos(\be-\th)}\right]^{1/4}\pm i e^{i\be/4}\left[\frac{\cos\th-\cos\be}{1-\cos(\be-\th)}\right]^{-1/4}\right]\,.
\end{equation}
On the rest of $\T$, the function $\ga(z)$ can be written as
\begin{equation}\label{mp7}
\ga(e^{i\th})=e^{-i\be/4} \left[\frac{\cos\be-\cos\th}{1-\cos(\th-\be)}\right]^{1/4}\,, \qquad \th\in(-\pi,-\be)\cup (\be,\pi)\,.
\end{equation}

\subsection{The parametrix at the band-saturated region end points}\label{sec:airy}
Consider small disks $D(x,\ep)$, centered at $x$ for $x=e^{\pm i\be}$, and let $D=D(e^{i\be},\ep)\cup D(e^{-i\be},\ep)$.  We seek a local parametrix $\mathbf U_n(z)$ defined on $D$ satisfying the following Riemann--Hilbert problem.
\begin{enumerate}
\item $\mathbf U_M(z)$ is analytic on $D \setminus \Sg_R$.
\item For $z\in D \cap \Sg_R$, $\mathbf U_M$ satisfies the jump condition 
\begin{equation}
\mathbf U_{M+}(z)=\mathbf U_{M-}(z)\mathbf J_S(z).
\end{equation}
\item As $M\to\infty$,
\begin{equation}\label{pm2}
 \mathbf U_M(z)=\mathbf M(z)(\mathbf I+\bigO(M^{-1})), \quad  \textrm{uniformly on}  \ \partial D.
 \end{equation}
\end{enumerate}
The solution to this local Riemann--Hilbert Problem is standard, and we present it here without proof.
Let $\Ai(z)$ be the Airy function \cite{Olver74}), and define the functions
\begin{equation}\label{pm7}
y_0(z)=\Ai (z), \quad y_1(z)=\omega \Ai (\omega z), \quad y_2(z)=\omega^2 \Ai (\omega^2 z),
\end{equation}
and the matrix-valued function
\begin{equation}\label{pmi5}
\Phi(z)=\left\{
\begin{aligned}
&\begin{pmatrix}y_2(z) & -y_0(z) \\ y_2'(z) & -y_0'(z) \end{pmatrix} 
\quad \textrm{for} \quad \arg z \in \left(0,\frac{\pi}{2}\right) \\
&\begin{pmatrix}y_2(z) & y_1(z) \\ y_2'(z) & y_1'(z) \end{pmatrix} 
\quad \textrm{for} \quad \arg z \in \left(\frac{\pi}{2},\pi \right) \\
&\begin{pmatrix}y_1(z) & -y_2(z) \\ y_1'(z) & -y_2'(z) \end{pmatrix} 
\quad \textrm{for} \quad \arg z \in \left(-\pi,-\frac{\pi}{2}\right) \\
&\begin{pmatrix}y_1(z) & y_0(z) \\ y_1'(z) & y_0'(z) \end{pmatrix} 
\quad \textrm{for} \quad \arg z \in \left(-\frac{\pi}{2},0 \right).
\end{aligned}
\right.
\end{equation}
Also introduce the function
\begin{equation}
\begin{aligned}
\psi(z):= - \left[\frac{3\pi}{2} \int_\phi^\be \left(\frac{\xi}{2\pi}-\rho(\theta)\right)\,d\theta\right]^{2/3}, \quad z=e^{i\phi}\in C_\be \cap D(e^{i\be},\ep). \\
\end{aligned}
\end{equation}
Since $\frac{\xi}{2\pi}-\rho(\th)$ vanishes at $\th=\be$ exactly like a square root, the function defined above is in fact analytic at $z=e^{i\be}$, and therefore extends to an analytic function on $D(e^{i\be},\ep)$. The function $\psi(z)$ is real valued on $C_\al$, $\psi_r(e^{i\be})=0$, and 
\begin{equation}
\frac{d}{d\phi} \psi\big(e^{i\phi}\big)\bigg|_{\phi=\be} >0.
\end{equation}
The solution to the local Riemann--Hilbert Problem in the disc $D(e^{i\be}, \ep)$ is given as
\begin{equation}
\mathbf U_M(z)=\mathbf E(z) \Phi(M^{2/3} \psi(z))e^{-M(g(z)-\frac{l}{2}-\frac{\log z}{2}-\frac{i\pi}{2})\sg_3} \times \left\{
\begin{aligned}
&z^{-(m/2)\sg_3}, \quad |z|<1 \\
&z^{(m/2)\sg_3}, \quad |z|>1, \\
\end{aligned}
\right.
\end{equation}
where
\begin{equation}
\mathbf E(z)=2\sqrt{\pi} \mathbf M(z) e^{\frac{im\al}{2}\sg_3} \begin{pmatrix} -i & -1 \\ -i & 1 \end{pmatrix}^{-1} M^{(1/6)\sg_3}\psi(z)^{(1/4)\sg_3}.
\end{equation}

The solution in $D(e^{-i\be},\ep)$ is similar and we do not present it here.

\subsection{The parametrix at the void-saturated region end points}\label{sec:hard_edge}
We now introduce a local transformation of the Riemann--Hilbert problem close to the endpoints $e^{\pm i\al}$ which allows for uniform estimates close to these points. The basic ideas behind this transformation can be found in \cite{Wang-Wong11}.
Introduce the function 
\begin{equation}
D(\z):=\left\{
\begin{aligned}
& \frac{\Ga\left(\frac{m\z}{2}+\frac{3}{2}\right)e^{m\z/2}}{\sqrt{2\pi}\left(\frac{m\z}{2}\right)^{m\z/2+1}}\,, \quad \textrm{for} \ \Re \z>0\,, \\ 
& \frac{\sqrt{2\pi}e^{m\z/2}}{\Ga\left(-\frac{m\z}{2}-\frac{1}{2}\right)\left(-\frac{m\z}{2}\right)^{m\z/2+1}}\,, \quad \textrm{for} \ \Re \z<0\,.
\end{aligned}\right.
\end{equation}
This function has the following properties:
\begin{itemize}
\item For $\z\in i\R$, the function $D$ has the multiplicative jump
\begin{equation} 
D_+(\z)=D_-(\z)\times\left\{
\begin{aligned}
&(1+e^{im\pi \z})\,, \quad \Im \z>0 \\
&(1+e^{-im\pi \z})\,, \quad \Im \z<0, \\
\end{aligned}\right.
\end{equation}
where the imaginary axis is oriented upward.
\item As $m\to \infty$, $D(\z) = 1+\bigO(m^{-1})$ for $\z$ bounded away from zero.
\end{itemize}
The first property follows from the reflection formula for the Gamma function, and the second follows from Stirling's formula. 

Now introduce the change of variable in a neighborhood of $e^{i\al}$,
\begin{equation}
\z_\al(z)=\frac{\log z-i\al}{i\pi}.
\end{equation}
Notice that the interval $I^+_\al$ is mapped to a piece of $i\R_+$, and $I^-_\al$ is mapped to a piece of $i\R_-$. In a small neighborhood to $e^{i\al}$, the arc $C_\al$ is mapped to a piece of the negative real axis, and the rest of $\T$ is mapped to the positive real axis. It follows that the composite function $D(\z_\al(z))$ has jumps on the intervals $I_\al^{\pm}$:
\begin{equation} 
D_+(\z_\al(z))=D_-(\z_\al(z))\times\left\{
\begin{aligned}
&(1+e^{m (\log(z)-i\al)})=(1-z^m)\,, \quad z\in I_\al^+ \\
&(1+e^{-m (\log(z)-i\al)})=(1-z^{-m})\,, \quad z\in I_\al^-, \\
\end{aligned}\right.
\end{equation}
where we have used the fact that $e^{i\al}=-1$.

Similarly, if we make the change of variable in a neighborhood of $e^{-i\al}$,
\begin{equation}
\z_{(-\al)}(z)=\frac{\log z+i\al}{i\pi},
\end{equation}
then the composite function $D(\z_{(-\al)}(z))$ has jumps on the intervals $I_{(-\al)}^{\pm}$:
\begin{equation} 
D_+(\z_{(-\al)}(z))=D_-(\z_{(-\al)}(z))\times\left\{
\begin{aligned}
&(1+e^{m (\log(z)+i\al)})=(1-z^m)\,, \quad z\in I_{(-\al)}^+ \\
&(1+e^{-m (\log(z)+i\al)})=(1-z^{-m})\,, \quad z\in I_{(-\al)}^-. \\
\end{aligned}\right.
\end{equation}
Define now the matrix function
\begin{equation} 
\mathbf D(z)=\left\{
\begin{aligned}
&D(\z_\al(z))^{\sg_3},\quad \textrm{for} \ z\in D(e^{i\al}, \ep)      \\
&D(\z_{(-\al)}(z))^{\sg_3},\quad \textrm{for} \ z\in D(e^{-i\al}, \ep)     \,.
\end{aligned}\right.
\end{equation}
It satisfies the jump condition on the intervals $I_\al^{\pm}$ and $I_{(-\al)}^{\pm}$,
\begin{equation} 
\mathbf D_+(z)=\mathbf D_-(z) \mathbf J_D(z)\,,
\end{equation}
where
\begin{equation}     
\mathbf J_D(z)=\left\{
\begin{aligned}
&(1-z^m)^{\sg_3}\,, \quad \textrm{for} \ z\in I_\al^+ \cup I_{(-\al)}^+ \\             
&(1-z^{-m})^{\sg_3}\,, \quad \textrm{for} \ z\in I_\al^- \cup I_{(-\al)}^-. \\    
\end{aligned}\right.
\end{equation}   

We now make the third transformation of the RHP, defining $\mathbf Y(z)$ as
\begin{equation}   \label{def:YM}
\mathbf Y(z):=\left\{
\begin{aligned}
&\mathbf S_M(z) \mathbf D(z) \,, \qquad \textrm{for} \ z\in D(e^{i\al}, \ep) \cup D(e^{-i\al}, \ep) \\
&\mathbf S_M(z), \qquad \textrm{otherwise}.
\end{aligned}\right.
\end{equation}
It has jumps on the contour $\Sg_R \cup \d  D(e^{i\al}, \ep) \cup \d D(e^{-i\al}, \ep)$,
\begin{equation} 
\mathbf Y_+(z)=\mathbf Y_-(z) \mathbf J_Y(z)\,,
\end{equation}
where
\begin{equation}     
\mathbf J_Y(z)=\left\{
\begin{aligned}
&\mathbf D_-(z)^{-1} \mathbf J_S(z) \mathbf D_+(z)\,, \quad \textrm{for} \ z\in \Sg_R \cap \{D(e^{i\al}, \ep) \cup D(e^{-i\al}, \ep)\} \\             
&\mathbf D(z)\,, \quad \textrm{for} \ z\in \d  D(e^{i\al}, \ep) \cup \d D(e^{-i\al}, \ep) \\    
&\mathbf J_S(z)\,, \quad \textrm{otherwise}.
\end{aligned}\right.
\end{equation}   
Consider the jump $ \mathbf J_S(z)$ for $z\in I_\al^\pm \cup I_{(-\al)}^\pm$.  By \eqref{st5a} it is
\begin{equation} 
\mathbf J_S(z)=\left\{
\begin{aligned}
&(1-z^m)^{-\sg_3} + \begin{pmatrix} 0 & 0 \\ -e^{-M(2g(z)-l-i\pi-\log(z))} & 0 \end{pmatrix}\,, \quad z\in I_\al^+ \cup I_{(-\al)}^+ \\
&(1-z^{-m})^{-\sg_3} + \begin{pmatrix} 0 & 0 \\ e^{-M(2g(z)-l-i\pi-\log(z))} & 0 \end{pmatrix}\,, \quad z\in I_\al^- \cup I_{(-\al)}^-.
\end{aligned}\right.
\end{equation}
Notice that the diagonal entries of this jump matrix are the reciprocals of the diagonal entries of the jump matrix for $\mathbf D(z)$. According to the Euler--Lagrange inequalities \eqref{eq15} together with \eqref{eq30}, the off-diagonal entry of this jump matrix is exponentially small in $M$. Thus we have
 \begin{equation} 
\mathbf J_S(z)=
\mathbf J_D(z)^{-1} + \begin{pmatrix} 0 & 0 \\ \bigO(e^{-cM}) & 0 \end{pmatrix}\,, \quad z\in I_\al^\pm \cup I_{(-\al)}^\pm\,, \\
\end{equation} 
for some constant $c>0$. The jump $\mathbf J_Y(z)$ is therefore
 \begin{equation} 
\mathbf J_Y(z)=\mathbf D_-(z)^{-1} \mathbf J_D(z)^{-1} \mathbf D_+(z)+\mathbf D_-(z)^{-1} \begin{pmatrix} 0 & 0 \\ \bigO(e^{-cM}) & 0 \end{pmatrix}\mathbf D_+(z) \quad \textrm{for} \ z\in I_\al^\pm \cup I_{(-\al)}^\pm\,.
\end{equation}
Since the matrices $\mathbf D_-(z)^{-1}, \mathbf J_D(z)^{-1},$ and  $\mathbf D_+(z)$ are all diagonal and therefore commute, and $\mathbf D(z)$ is uniformly bounded, we have
 \begin{equation} 
\mathbf J_Y(z)=\mathbf I+ \bigO(e^{-cM}), \quad \textrm{for} \ z\in I_\al^\pm \cup I_{(-\al)}^\pm\,.
\end{equation}
A similar estimate holds for the jump of $\mathbf Y(z)$ on $C_\al \cap \{D(e^{i\al}, \ep) \cup D(e^{-i\al}, \ep)\}$, so we have
 \begin{equation} 
\mathbf J_Y(z)=\mathbf I+ \bigO(e^{-cM}), \quad \textrm{for} \ z\in \Sg_R \cap \{D(e^{i\al}, \ep) \cup D(e^{-i\al}, \ep)\}\,.
\end{equation}
On the circles $ \d  D(e^{i\al}, \ep)$ and $\d D(e^{-i\al}, \ep)$ we have the estimate $\mathbf D(z)=\mathbf I+\bigO(m^{-1})=\mathbf I+\bigO(M^{-1})$, and thus $\mathbf J_Y(z)$ satisfies the estimates
\begin{equation}     
\mathbf J_Y(z)=\left\{
\begin{aligned}
&\mathbf  I+ \bigO(e^{-cM})\,, \quad \textrm{for} \ z\in \Sg_R \cap \{D(e^{i\al}, \ep) \cup D(e^{-i\al}, \ep)\} \\             
&\mathbf  I+\bigO(M^{-1})\,, \quad \textrm{for} \ z\in \d  D(e^{i\al}, \ep) \cup \d D(e^{-i\al}, \ep) \\    
&\mathbf J_S(z)\,, \quad \textrm{otherwise}.
\end{aligned}\right.
\end{equation}  

\subsection{The final transformation of the RHP and proofs of Propositions \ref{monic_band}, \ref{thmasym2}, \ref{hard_edge}, \ref{turning_points}, \ref{rho_M}, and \ref{asymhn}}
We consider the contour $\Sg_X$, which consists of the circles $\d D(e^{i\be}, \ep)$, $\d D(e^{-i\be}, \ep)$, $\d D(e^{i\al}, \ep)$, $\d D(e^{-i\al}, \ep)$, along with the part of $\Sg_R$ lies outside the disks $\d D(e^{\pm i\be}, \ep)$.

We make the transformation 
\begin{equation}\label{tt1}
\mathbf  X_M(z)=\left\{
\begin{aligned}
&\mathbf  Y_M(z) \mathbf  U_M(z)^{-1}\,, \quad z \in D(e^{\pm i\be},\ep) \\
&\mathbf  Y_M(z) \mathbf  M(z)^{-1}\,, \quad \textrm{otherwise}\,.
\end{aligned}\right.
\end{equation}
Then $\mathbf  X_M(z)$ solves the following RHP.
\begin{enumerate}
\item $\mathbf  X_M(z)$ is analytic on $\C \setminus \Sg_X$.
\item On the contour $\Sg_X$, $\mathbf  X(z)$ satisfies the jump properties
\begin{equation}\label{tt1a}
\mathbf  X_{M+}(z)=\mathbf  X_{M-}(z)\mathbf J_X(z),
\end{equation}
where
\begin{equation}\label{tt2}
\mathbf J_X(z)=\left\{
\begin{aligned}
&\mathbf  M(z) \mathbf  U_M(z)^{-1}\,, \quad z\in \partial D( e^{\pm i\be},\ep) \\
&\mathbf  M(z) \mathbf J_Y(z) \mathbf  M(z)^{-1}\,, \quad \textrm{otherwise}.
\end{aligned}\right.
\end{equation}
\item As $z\to\infty$, 
\begin{equation}\label{tt3}
\mathbf  X_M(z)= \mathbf I +\frac{\mathbf  X_1}{z} +\frac{\mathbf  X_2}{z^2}+\dots
\end{equation}
\end{enumerate}
Note that $\mathbf M(z)$ is analytic in a neighborhood of $e^{\pm i\al}$, so the same estimates which held for $\mathbf J_Y(z)$ also hold for $\mathbf J_X(z)$ close to $e^{\pm i\al}$. On the circles $\d D(e^{\pm i\be}, \ep)$, we have the estimate $\mathbf J_X(z)=\mathbf I+ \bigO(M^{-1})$, and thus
 $\mathbf J_X(z)$ is uniformly close to the identity.  More specifically, for some $c>0$, we have the uniform estimate
\begin{equation}\label{tt4}
\mathbf J_X(z)=\left\{
\begin{aligned}
&\mathbf I+\bigO(M^{-1})\,, \quad z\in \d D(e^{\pm i\al},\ep) \cup \d D(e^{\pm i\be},\ep)  \\
&\mathbf I+\bigO(e^{-cM}) \quad \textrm{otherwise}.
\end{aligned}\right.
\end{equation}
The solution to the RHP for $\mathbf  X_M(z)$ is well known and is given by a series of perturbation theory.  The solution is
\begin{equation}\label{ft5}
\begin{aligned}
\mathbf  X_M(z)&=\mathbf I+\sum_{k=1}^\infty \mathbf  X_{M,k}(z)\,, \quad \mathbf  X_{M,k}(z)=-\frac{1}{2\pi i}\int_{\Sg_X} \frac{\mathbf  X_{M,k-1}(u)(j_X(u)-\mathbf I)}{z-u}du \\
\mathbf  X_{M,0}(z)&=\mathbf I.
\end{aligned}
\end{equation}
Notice that, according to (\ref{tt4}), $\mathbf  X_{M,k}=\bigO(M^{-k})$, thus $\mathbf  X_M(z) = \mathbf I+\bigO(M^{-1})$ uniformly for $z\in \C$.

We can now prove Propositions \ref{monic_band}, \ref{thmasym2}, \ref{hard_edge}, \ref{turning_points}, and \ref{rho_M} by inverting the explicit transformations to the Interpolation Problem presented in Section \ref{sec:IP} to write a formula for $\mathbf P_M(z)$ in terms of $\mathbf  X_M(z)$. Since we made different transformations in various regions of the complex plane, we arrive at different formulas for $\mathbf P_M(z)$. For $z$ bounded away from the arc $C_\al$, we use \eqref{red5}, \eqref{ft1}, \eqref{st1}, \eqref{def:YM}, and \eqref{tt1} to find
\begin{equation}
\mathbf P_M(z) = \sg_3 e^{M\frac{l+i\pi}{2}\sg_3} \mathbf X_M(z) \mathbf M(z) e^{M(g(z)-\frac{l+i\pi}{2})\sg_3}\sg_3.
\end{equation}
Using  $\mathbf  X_M(z) = \mathbf I+\bigO(M^{-1})$, we can expand this expression at take the $(11)$-entry to prove Proposition \ref{rho_M}. Plugging in $z=0$ and using \eqref{IP4a} then proves Proposition \ref{asymhn}.

Similarly, for $z$ in a neighborhood of a compact subset of the open arc $C_\be$ we have
\begin{equation}
\mathbf P_M(z) = \sg_3 e^{M\frac{l+i\pi}{2}\sg_3} \mathbf X_M(z) \mathbf M(z) \mathbf J_{\pm}(z)^{\pm 1}e^{M(g(z)-\frac{l+i\pi}{2})\sg_3}\sg_3\mathbf D_{\pm}^u(z)^{-1}, \qquad \textrm{for} \ \pm(|z|-1)<0.
\end{equation}
Proposition \ref{monic_band} is proved by multiplying out this explicit formula, using $\mathbf  X_M(z) = \mathbf I+\bigO(M^{-1})$, taking the limit as $|z|\to 1$ from either side, and looking at the $(11)$-entry. 

For $z$ in a neighborhood of a compact subset of $\{e^{\pm i\phi} : \be\le\phi\le\al\}$ we have
\begin{equation}
\mathbf P_M(z) = \mp \sg_3 e^{M\frac{l+i\pi}{2}\sg_3} \mathbf X_M(z) \mathbf M(z) \mathbf z^{\pm (m/2)\sg_3}e^{M(g(z)-\frac{l+i\pi}{2})\sg_3}\sg_3\mathbf D_{\pm}^l(z)^{-1}, \qquad \textrm{for} \ \pm(|z|-1)<0.
\end{equation}
Proposition \ref{thmasym2} is proved by multiplying out this explicit formula, using $\mathbf  X_M(z) = \mathbf I+\bigO(M^{-1})$, taking the limit as $|z|\to 1$ from either side, and looking at the $(11)$-entry. 

Finally, Propositions   \ref{hard_edge} and \ref{turning_points}  are proved by inverting the explicit transformations in a neighborhood of the points $e^{\pm i\al}$ and $e^{\pm i\be}$, respectively. These transformations involve the local solutions presented in Sections  \ref{sec:hard_edge} and \ref{sec:airy}. The transformations are different in different sectors around the points $e^{\pm i\al}$ and $e^{\pm i\be}$, but one can check that the different transformations give a uniform asymptotic formula in a full neighborhood of $e^{\pm i\al}$ and $e^{\pm i\be}$. We  omit the lengthy but straightforward and standard calculation.

\bibliographystyle{plain}
\bibliography{bibtex.bib}

\end{document}